\theoremstyle{plain}
\newtheorem{theo}{Theorem}[section]
\newtheorem{cor}[theo]{Corollary}
\newtheorem{proposition}[theo]{Proposition}
\newtheorem{theorem}[theo]{Theorem}
\newtheorem{lemma}[theo]{Lemma}
\newtheorem{remark}[theo]{Remark}
\theoremstyle{definition}
\newtheorem{defi}[theo]{Definition}
\theoremstyle{remark}
 \def\RR{{\mathbb R}}
\def\NN{{\mathbb N}}
\begin{document}

\title[Geometric structures on surfaces]{Locally homogeneous rigid geometric structures on surfaces}

\author[S. Dumitrescu]{Sorin DUMITRESCU$^\star$}

\address{${}^\star$ D\'epartement de Math\'ematiques d'Orsay, 
Bat. 425, U.M.R.   8628  C.N.R.S.,
Univ. Paris-Sud (11),
91405 Orsay Cedex, France}
\email{Sorin.Dumitrescu@math.u-psud.fr}
 
 \thanks{This work was  partially supported by  the ANR Grant Symplexe BLAN 06-3-137237}
\keywords{geometric structures-projective connections-transitive Killing Lie algebras.}
\subjclass{53B21, 53C56, 53A55.}
\date{\today}

\setcounter{tocdepth}{1}

\maketitle

\begin{abstract}    We study locally homogeneous rigid geometric structures on surfaces. We show that a locally homogeneous projective connection on a compact surface is flat.
We also show that a locally homogeneous unimodular  affine connection $\nabla$ on a two dimensional torus is complete and, up to a  finite cover,   homogeneous. 

Let $\nabla$ be a  unimodular real analytic affine connection on a real analytic compact connected surface $M$. If $\nabla$ is   locally homogeneous on a nontrivial open set in $M$, we prove that $\nabla$ is locally homogeneous on all of $M$.\\ 
\end{abstract}

\section{Introduction}

Riemannian metrics  are  the most commun (rigid) geometric structures. A locally homogeneous Riemannian metric on a surface has constant sectional curvature and
it is locally isometric  either to the standard metric on the  two-sphere, or to the flat metric on $\RR^2$, or to the hyperbolic metric on the Poincar\'e's upper-half plane (hyperbolic plane).
Obviously the flat metric is translation invariant on $\RR^2$. Recall also that  the isometry group  of the hyperbolic plane is isomorphic to $PSL(2,\RR)$ and contains  copies of the affine group
 of the real line (preserving orientation)  $Aff(\RR)$,  which act locally free (they are the   stabilizers  of   points on the boundary). Consequently,  the hyperbolic plane is locally isometric to a translation invariant Riemannian metric on $Aff(\RR)$.

We generalize here this phenomena to all  rigid geometric structures in Gromov's sense (see the definition in the following section).

\begin{theo}  \label{surfaces} Let $\phi$ be a  locally homogeneous rigid geometric structure   on a surface. Then
$\phi$ is locally isomorphic to a rigid geometric structure  which  is either rotation invariant on the two-sphere, or translation invariant on $\RR^2$,  or  translation invariant on the  affine group  of the real line preserving orientation $Aff(\RR)$.
\end{theo}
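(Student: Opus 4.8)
The plan is to translate the problem into a statement about the local Killing Lie algebra of $\phi$ and then to produce, in each case, a $2$-dimensional group acting simply transitively by automorphisms. First I would pass to the Killing algebra. Since $\phi$ is locally homogeneous, its local automorphisms act transitively; rigidity then forces the algebra $\mathfrak{g}$ of local Killing fields of $\phi$ to be finite-dimensional and, at every point, to span the tangent space, so that $\mathfrak{g}$ acts locally transitively on the surface. Fixing a point $p$ and setting $\mathfrak{h}=\{X\in\mathfrak{g}:X(p)=0\}$ for the isotropy subalgebra, local transitivity gives $\dim(\mathfrak{g}/\mathfrak{h})=2$. The crucial remark is that whenever a subalgebra $\mathfrak{s}\subset\mathfrak{g}$ of dimension $2$ satisfies $\mathfrak{s}\cap\mathfrak{h}=\{0\}$, the corresponding simply connected group $S$ acts locally simply transitively while still preserving $\phi$ (because $\mathfrak{s}$ consists of Killing fields), and hence identifies $\phi$ with a left-invariant structure on $S$.

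The heart of the argument is therefore the following purely Lie-algebraic dichotomy for a transitive codimension-two pair $(\mathfrak{g},\mathfrak{h})$: either $\mathfrak{g}$ contains a two-dimensional subalgebra $\mathfrak{s}$ transverse to $\mathfrak{h}$, or $(\mathfrak{g},\mathfrak{h})$ is isomorphic to $(\mathfrak{so}(3),\mathfrak{so}(2))$. To establish it I would use the Levi decomposition of $\mathfrak{g}$ and run through the transitive codimension-two pairs: when $\mathfrak{g}$ is solvable, or when a noncompact simple factor such as $\mathfrak{sl}(2,\RR)$ (or a larger factor acting, as in the projective case, on a projective plane) is present, one exhibits a two-dimensional subalgebra — a Borel, the nilradical of a parabolic, or a suitable solvable piece — and then checks that it can be chosen complementary to $\mathfrak{h}$. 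The only way to be left with no complementary subalgebra is that $\mathfrak{g}$ reduces to the compact form $\mathfrak{so}(3)$, which has no two-dimensional subalgebra at all and in which every one-dimensional subalgebra is conjugate to $\mathfrak{so}(2)$. I expect this step to be the main obstacle: the delicate point is to guarantee \emph{transversality} to $\mathfrak{h}$, not merely the existence of a two-dimensional subalgebra, and to control the possibly large isotropy that occurs for flat models (where $\mathfrak{g}$ can be as big as the affine or the projective algebra). I would resolve the transversality by a genericity argument inside the variety of two-dimensional subalgebras, the non-transverse ones forming a proper subvariety as soon as a one-parameter family of such subalgebras is available.

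Granting the dichotomy, the conclusion is immediate. If a transverse complement $\mathfrak{s}$ exists, let $S$ be the simply connected group with Lie algebra $\mathfrak{s}$; by the remark above $\phi$ is locally isomorphic to a left-invariant, that is translation-invariant, structure on $S$. Since there are exactly two two-dimensional Lie algebras, $S$ is either $\RR^2$, when $\mathfrak{s}$ is abelian, or the group $Aff(\RR)$ of orientation-preserving affine transformations of the line, when $\mathfrak{s}$ is the nonabelian one; these are the second and third cases of the statement. In the remaining case $(\mathfrak{g},\mathfrak{h})\cong(\mathfrak{so}(3),\mathfrak{so}(2))$, the local model is $SO(3)/SO(2)=S^2$ and $\phi$ is invariant under the rotation group $SO(3)$, giving the first case. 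This exhausts all possibilities and yields the claimed trichotomy.
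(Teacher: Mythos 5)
Your proposal follows essentially the same route as the paper: reduce to the local Killing algebra $\mathfrak{g}$ (finite-dimensional by rigidity, locally transitive by local homogeneity) and invoke the dichotomy that a transitive codimension-two pair $(\mathfrak{g},\mathfrak{h})$ either contains a two-dimensional subalgebra transverse to the isotropy --- necessarily $\RR^2$ or the Lie algebra of $Aff(\RR)$, giving a locally simply transitive group of automorphisms --- or is $(\mathfrak{so}(3),\mathfrak{so}(2))$ acting on $S^2$. The only difference is that the paper obtains this dichotomy by quoting Lie's classification of transitive Lie algebra actions on surfaces, whereas you sketch a direct Levi-decomposition/transversality argument for it; the statement you need is exactly the one the paper cites, so the approach is the same.
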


If  $\phi$ is an affine connection this was first proved  by B. Opozda~\cite{Opozda} for the case torsion-free (see also the group-theoretical approach in~\cite{KOV}), and then
 by T. Arias-Marco and O. Kowalski in the case of arbitrary torsion~\cite{AM-K}.

  Theorem~\ref{surfaces} stands, in particular, for projective connections. In this case the theorem can also be deduced  from the results obtained in~\cite{BMM}. Recall that a well-known class  of locally homogeneous  projective connections $\phi$  on  surfaces  are those which are  {\it flat}, i.e.  locally isomorphic to the standard projective connection of the projective plane  $P^2(\RR)$. The automorphism  group of $P^2(\RR)$ is the projective group $PGL(3,\RR)$ which  contains $\RR^2$ acting  freely (by translation) on the affine plane: it is the subgroup which fixes each point in  the line at the infinity. Consequently, $\phi$ is locally isomorphic  to a translation invariant projective  connection on $\RR^2$.
 
 We prove also the following global  results dealing with projective and affine connections on surfaces:
 
 \begin{theorem} \label{projective}  A locally homogeneous projective connection on a compact surface is flat.
 \end{theorem}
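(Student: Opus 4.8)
The plan is to combine Theorem~\ref{surfaces} with a local analysis of the three model geometries, and in the one problematic case with a global argument using the compactness of $M$. By Theorem~\ref{surfaces} the projective connection $\phi$ is locally isomorphic to a projective connection which is invariant either under the rotations of the two-sphere $S^2$, or under the translations of $\RR^2$, or under the left translations of $Aff(\RR)$. It therefore suffices to treat these three homogeneous models and to show that on a compact surface only the flat possibility survives.

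First I would dispose of the two easy models. On $S^2$ the isotropy group of the $SO(3)$-action is $SO(2)$, acting on the tangent plane by rotations. The obstruction to projective flatness in dimension two is the Liouville (projective Cotton) invariant, which is a section of a bundle associated to a representation in which $SO(2)$ acts with only odd weights $\pm 1,\pm 3$ (it transforms like a cubic differential); hence it admits no nonzero $SO(2)$-fixed vector, so a rotation-invariant projective connection on $S^2$ has vanishing obstruction and is flat. For the $\RR^2$ model the invariant frame $\partial_x,\partial_y$ is a coordinate frame, so translation invariance means that a representative affine connection has constant Christoffel symbols in the standard coordinates; computing the projective Cotton tensor $\nabla_{[i}P_{j]k}$ of such a connection gives zero, so this model is flat as well.

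The remaining, and genuinely delicate, case is the $Aff(\RR)$ model, where the left-invariant frame is \emph{not} a coordinate frame, so the Christoffel symbols are non-constant and the Cotton tensor can be nonzero: here there really do exist non-flat homogeneous examples, and one must use that $M$ is compact. Assume $\phi$ is non-flat; then by the previous paragraph the only possible local model is $Aff(\RR)$, and the Killing Lie algebra $\mathfrak{g}$ of local automorphisms contains a copy of the two-dimensional non-abelian Lie algebra acting simply transitively. Since a projective connection is a rigid geometric structure and local homogeneity lets us regard $\phi$ as real analytic, the extension theorem for Killing fields of rigid analytic structures (Nomizu, Amores, Gromov) applies on the compact manifold $M$: the local Killing fields extend to global ones on the universal cover $\tilde M$. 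The induced action of the simply connected group $G=Aff(\RR)$ (diffeomorphic to $\RR^2$) on the simply connected surface $\tilde M$ is transitive with discrete stabilizers, which the homotopy exact sequence forces to be trivial, so it is simply transitive and identifies $\tilde M$ with $G$ carrying its left-invariant projective connection.

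The conclusion then comes from a unimodularity obstruction, which I expect to be the main difficulty to make airtight. The deck group $\Gamma=\pi_1(M)$ preserves $\phi$ and commutes with the extended Killing fields, hence centralizes the $G$-action by left translations; therefore $\Gamma$ lies in the centralizer of left translations inside $\mathrm{Diff}(\tilde M)$, which is exactly the group of right translations, isomorphic to $G$. As $\Gamma$ acts freely, properly discontinuously and cocompactly (because $M$ is compact), it is a cocompact lattice in $G=Aff(\RR)$. But $Aff(\RR)$ is not unimodular, and a connected Lie group admitting a lattice must be unimodular; this contradiction shows that the $Aff(\RR)$ model cannot occur on a compact surface unless $\phi$ is flat. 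Combining the three cases, $\phi$ is flat on all of $M$. The steps requiring the most care are the passage from local to global (justifying analyticity and the extension of the Killing fields to $\tilde M$) and the control of the centralizer, so that $\Gamma$ is genuinely forced into the non-unimodular group $G$; everything else is either the cited structure theorem or a routine curvature computation.
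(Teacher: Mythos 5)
Your reduction to the three models of Theorem~\ref{surfaces} and your treatment of the $S^2$ and $\RR^2$ models are fine, and in the case where the Killing algebra really is $\mathfrak{aff}(\RR)$ acting with trivial isotropy your punchline is the same as the paper's: $M$ would be a compact quotient of $Aff(\RR)$ by a discrete cocompact subgroup, impossible since $Aff(\RR)$ is not unimodular and therefore has no lattice. (The paper reaches that quotient presentation differently: since the isotropy is trivial, hence compact, the structure is of Riemannian type, and completeness follows from Theorem~\ref{isotropie compacte}; you instead invoke Nomizu--Amores--Gromov extension, but you never justify that the extended Killing fields on $\tilde M$ are complete, and your claim that deck transformations \emph{commute} with the extended Killing fields is false in general --- they only normalize the Killing algebra, acting on it through the holonomy. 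Both points are repairable.)

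The genuine gap is that your trichotomy conflates the local model with the full Killing algebra, and thereby misses the one case that carries the real content of the theorem. By the classification used in the paper (Lemmas 3 and 4 of~\cite{BMM}), a non-flat locally homogeneous projective connection on a surface has Killing algebra either $\mathfrak{aff}(\RR)$ with trivial isotropy \emph{or} $\mathfrak{sl}(2,\RR)$ with one-parameter \emph{unipotent} isotropy, the model then being $SL(2,\RR)/I\cong\RR^2\setminus\{0\}$. This second case is still ``locally a left-invariant structure on $Aff(\RR)$'' in the sense of Theorem~\ref{surfaces}, because a Borel subgroup acts simply transitively on an open half-plane; so it sits inside your third alternative. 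But there your argument collapses: no single copy of $Aff(\RR)$ acts transitively on $\RR^2\setminus\{0\}$ (every Borel degenerates along a punctured line, which the developing image may meet), the isotropy of the full Killing algebra is noncompact so stabilizers on $\tilde M$ are not discrete, and the final unimodularity obstruction evaporates since $SL(2,\RR)$ is unimodular and has cocompact lattices. The paper excludes this case by a separate argument: the $SL(2,\RR)$-action on $\RR^2\setminus\{0\}$ preserves the Euler field $x_1\partial_{x_1}+x_2\partial_{x_2}$, whose divergence with respect to the invariant volume form is the nonzero constant $2$; on a compact $M$ the induced flow must preserve $\int_M vol$, forcing that divergence to vanish --- a contradiction. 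Without an argument of this kind your proof does not establish the theorem.
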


  \begin{theorem} \label{affine connection1} Let $M$ be a compact connected real analytic  surface endowed with a unimodular real  analytic   affine connection $\nabla$. If $\nabla$ is locally homogeneous on a  nontrivial  open set in $M$, then $\nabla$ is locally homogeneous on  $M$.
 \end{theorem}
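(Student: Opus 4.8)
The plan is to study the Lie algebra of local Killing fields of $\nabla$ and to prove that its transitivity locus is all of $M$. First I would pass to the universal cover $\pi:\tilde M\to M$ and pull back $\nabla$ to an analytic unimodular affine connection $\tilde\nabla$, which is then locally homogeneous on $\pi^{-1}(U)$. Since $\tilde\nabla$ is a rigid analytic geometric structure in Gromov's sense and $\tilde M$ is simply connected, I would invoke the extension theorem for local Killing fields (Nomizu, Amores, Gromov): every germ of a Killing field of $\tilde\nabla$ extends by analytic continuation to a Killing field defined on all of $\tilde M$. This produces a finite-dimensional Lie algebra $\mathfrak g$ of global Killing fields on $\tilde M$, and for rigid analytic structures local homogeneity near a point is equivalent to the evaluation map $\mathrm{ev}_x:\mathfrak g\to T_x\tilde M$ being onto at that point.

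Next I would propagate local homogeneity from $\pi^{-1}(U)$ to a dense open set. Surjectivity of $\mathrm{ev}_x$ is expressed by the non-vanishing of the wedge $X\wedge Y$ of two suitable Killing fields, which is an analytic function on the connected surface $\tilde M$; as it is not identically zero (it is nonzero on $\pi^{-1}(U)$), its zero set is a proper analytic subset $S$. Hence $\mathfrak g$ acts transitively on the dense open set $\Omega=\tilde M\setminus S$, so $\nabla$ is locally homogeneous on a dense open subset of $M$, and by Theorem~\ref{surfaces} it is there locally isomorphic to one of the three homogeneous models. Unimodularity of $\nabla$, i.e.\ the existence of a $\nabla$-parallel volume form, selects the admissible models and pins down the algebraic type and the dimension of $\mathfrak g$.

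The core of the argument is then to show that the singular set $S$ is empty. Here $S$ is a locally finite union of points and analytic curves along which the $\mathfrak g$-orbit drops dimension, and I would analyze $\mathfrak g$ near such a singular orbit. Through a point $s\in S$ the orbit is either a point or a curve; in each case the isotropy subalgebra $\mathfrak g_s$ is strictly larger than the generic one, and its linear isotropy representation must preserve both the $1$-jet of $\tilde\nabla$ at $s$ and the value of the parallel volume form. Using this together with the explicit local models furnished by Theorem~\ref{surfaces}, I would derive a contradiction: a lower-dimensional orbit would force the parallel volume to degenerate or the analytically continued connection to leave its homogeneous model, which is impossible on the connected analytic surface. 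Compactness of $M$ enters to guarantee completeness of the Killing fields, so that the automorphism pseudo-group integrates to genuine dynamics on which these rigidity arguments can be run.

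I expect the main obstacle to be precisely this last step, the elimination of the singular orbits. The difficulty is that the Killing algebra can genuinely degenerate along an analytic curve, and both hypotheses are essential: without unimodularity the invariant volume is unavailable, and without compactness one loses the global dynamical control. The real content therefore lies in the case analysis at singular orbits, controlling the linear isotropy representation by means of the $\nabla$-parallel volume form.
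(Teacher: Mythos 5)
Your setup is the same as the paper's: analytic continuation of local Killing fields (Nomizu--Amores--Gromov), density of the locally homogeneous locus by analyticity, and reduction to showing that the degeneracy set $S$ is empty. But the proposal stops exactly where the proof begins. The mechanism you offer for eliminating singular orbits --- that the linear isotropy at $s\in S$ preserving the $1$-jet of $\nabla$ and the parallel volume ``would force the parallel volume to degenerate or the connection to leave its homogeneous model'' --- is not a valid local argument, because there is no local obstruction: the linear action of $SL(2,\RR)$ on $\RR^2$ fixes the origin, preserves the standard volume form and the flat connection, and is transitive off the origin; likewise there are local $\mathfrak{aff}(\RR)$-actions preserving a volume and a connection whose orbit dimension drops along an analytic curve. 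The contradiction in the paper is genuinely \emph{global} and splits into a case analysis that your proposal does not touch. First one needs the lemma that the isotropy algebra of a unimodular connection at a point of a surface never has dimension $2$ (via the fact that stabilizers of linear algebraic $SL(2,\RR)$-actions have dimension $\neq 2$); combined with the uniqueness of the $SL(2,\RR)$-invariant connection on $\RR^2\setminus\{0\}$ (which is flat and torsion free, hence by analyticity would make $\nabla$ locally homogeneous everywhere), this forces the Killing algebra to be two dimensional, and unimodularity of the connection then rules out $\RR^2$ (since $vol(K_1,K_2)$ would be a nonzero constant), leaving $\mathfrak{g}\cong\mathfrak{aff}(\RR)$ with one-dimensional isotropy along $S$.

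The remaining two cases (unipotent versus semi-simple isotropy along the circle components of $S$) are each killed by a different global argument: in the unipotent case, the relation $[K_1,K_2]=K_1$ makes the flow of $X$ expand the isotropy generator $Y$ backwards in time, so negative $X$-orbits cannot accumulate on $S$ where $Y$ vanishes (alternatively, completeness exhibits each component of $M\setminus S$ as $Aff(\RR)/\Gamma$ with $\Gamma$ a lattice, impossible since $Aff(\RR)$ is not unimodular); in the semi-simple case one builds two transverse invariant foliations, hence an invariant flat Lorentz metric on the torus (Poincar\'e--Hopf and Gauss--Bonnet), and an invariant vector field $T$ vanishing exactly on $S$ whose divergence must vanish by finite total volume yet equals $1$ in exponential coordinates. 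None of this is recoverable from your sketch, and your appeal to ``completeness of the Killing fields from compactness'' is also not automatic: the fields live on $M\setminus S$ (or on the noncompact universal cover), and their completeness there is something the paper has to prove, not assume. So the proposal identifies the right skeleton but has a genuine gap at the only hard step.
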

 
 The main result of the article is Theorem~\ref{affine connection1}. It is motivated by the celebrated open-dense orbit theorem of M. Gromov~\cite{DG,Gro} (see also~\cite{Benoist2,CQ,Feres}). Gromov's result asserts that a rigid geometric structure admitting an automorphism group which acts  with a dense orbit is locally homogeneous on an open dense set.  This  maximal  locally homogeneous open (dense)  set appears to be  mysterious and it might very well happen that it coincides with all of the (connected)  manifold in many interesting geometric backgrounds. This was proved, for instance,  for Anosov flows with differentiable stable and instable foliations and  transverse contact  structure~\cite{BFL} and for
 three dimensional compact Lorentz manifolds admitting a nonproper one parameter group acting by automorphisms~\cite{Zeghib}. In ~\cite{BF}, the authors deal with this question  and their results  indicate ways in which the rigid geometric structure cannot degenerate off the open dense set.

 Surprisingly, the extension of a locally homogeneous open dense subset to all of the (connected) manifold  might stand {\it even without assuming the existence of a big automorphism group}. This  is known to be true in the Riemannian setting~\cite{Tri}, as a consequence of the fact that all scalar invariants are constant (see also Corollary~\ref{metric} in our section~\ref{section4}). This was also  recently  proved in the frame of  three dimensional real analytic Lorentz metrics~\cite{Dumitrescu} and for 
 complete  real analytic pseudo-Riemannian metrics~\cite{Melnick}. 
 
 Theorem~\ref{affine connection1}  proves the extension  phenomenon in the setting of  affine connections on surfaces. We don't know  if the  result   is still true when $\nabla$ is not   unimodular analytic, or in higher dimension.
 
 As a by-product of the proof we get  the following:
 
 \begin{theorem} \label{sur tore} A locally homogeneous unimodular affine connection on a two dimensional torus is complete and, up to a finite cover,  homogeneous.
 \end{theorem}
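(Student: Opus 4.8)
The plan is to pass to the universal cover, extend the local symmetries to a transitive Killing algebra, and then reduce both completeness and homogeneity to an analysis of the associated homogeneous model; the essential point will be that \emph{global} unimodularity (the existence of a genuine parallel area form on $T^2$, not merely a local one) forces geodesic completeness.

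First I would lift $\nabla$ to a connection $\tilde\nabla$ on the universal cover $\RR^2$, on which the deck group $\ZZ^2=\pi_1(T^2)$ acts by automorphisms. Since $\nabla$ is locally homogeneous it is real-analytic in suitable coordinates, so by the classical extension theorem for Killing fields of an analytic rigid structure on a simply connected manifold the local Killing algebra extends to a finite-dimensional Lie algebra $\mathfrak g$ of globally defined Killing fields of $\tilde\nabla$ acting transitively on $\RR^2$. Unimodularity means that the global parallel area form $\omega$ lifts to a $\ZZ^2$-invariant parallel form on $\RR^2$, so the isotropy representation lies in $SL(2,\RR)$; hence $\dim\mathfrak g\in\{2,3\}$ with isotropy of dimension $0$ or $1$. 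Invoking Theorem~\ref{surfaces}, I would then identify the local model: the rotation-invariant connection on $S^2$ is excluded (its Killing algebra is $\mathfrak{so}(3)$, which would force the completed space to be $S^2$ or $P^2(\RR)$, not a torus), so $\tilde\nabla$ is locally isomorphic to a translation-invariant connection on $\RR^2$ or on $Aff(\RR)$. Choosing a simply transitive subgroup $G\in\{\RR^2, Aff(\RR)\}$ of the isometry group, $\tilde\nabla$ becomes a left-invariant connection on $G$, and $\ZZ^2$ embeds as a discrete subgroup $\rho(\ZZ^2)\subset G$ with $T^2=\rho(\ZZ^2)\backslash G$.

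The main obstacle is completeness. Left-invariant unimodular connections on $G$ need not be complete a priori: dropping the global hypothesis one finds incomplete (radiant) flat structures on $T^2$ whose parallel volume exists only locally and has nontrivial monodromy, so the argument must genuinely use that $\omega$ is globally defined and that $\rho(\ZZ^2)$ acts cocompactly while preserving $\omega$. I would show that the developing map of $\tilde\nabla$ into its homogeneous model $G/H$ is a diffeomorphism: the cocompact $\ZZ^2$-action together with the invariance of $\omega$ provides the recurrence needed to prevent geodesics from escaping in finite time, which is the two-dimensional instance of the Markus phenomenon relating parallel volume to completeness. Completeness of $\tilde\nabla$, hence of $\nabla$, follows.

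Finally, for homogeneity up to a finite cover, I would observe that once $\tilde\nabla$ is complete the identity component of the automorphism group of $(T^2,\nabla)$ is, up to finite index, the centralizer of $\rho(\ZZ^2)$ in $G$ acting on the quotient. A direct inspection of the two cases $G=\RR^2$ and $G=Aff(\RR)$ then shows that, after passing to a finite cover that absorbs the possible finite obstructions, this centralizer acts transitively, yielding homogeneity. I expect the completeness step to carry the real weight of the proof, the remaining steps being the structural bookkeeping of the Killing algebra and the elementary group theory of $\RR^2$ and $Aff(\RR)$.
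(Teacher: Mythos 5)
There is a genuine gap, and it sits exactly where you admit the weight of the proof lies. First, your reduction is circular and, as stated, incorrect: writing $T^2=\rho(\ZZ^2)\backslash G$ with $G\in\{\RR^2,Aff(\RR)\}$ simply transitive already asserts that the developing map is a diffeomorphism, i.e.\ the completeness you have yet to prove; moreover the holonomy lives in the \emph{full} automorphism group of the model, which in the non-flat unimodular cases is a three-dimensional group (the paper's classification gives $SOL$, the Heisenberg group, or $\RR\times Aff(\RR)$), not in a chosen two-dimensional simply transitive subgroup. Indeed $Aff(\RR)$ is non-unimodular and admits no lattice, so $\rho(\ZZ^2)\backslash Aff(\RR)$ is never a torus: the case you would need to ``inspect directly'' is vacuous, which signals that the reduction itself has discarded the actual models. (Your claim that the isotropy has dimension $0$ or $1$ also needs an argument; the paper proves separately that stabilizers of algebraic $SL(2,\RR)$-actions are never two-dimensional, and handles three-dimensional isotropy by showing it forces the flat torsion-free connection.)

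Second, the completeness step is not a proof. The ``Markus phenomenon'' is a conjecture about \emph{flat} affine manifolds, open in general and known on $T^2$ only through substantial theorems (Nagano--Yagi's classification, or Fried--Goldman--Hirsch for nilpotent holonomy); there is no known recurrence argument deriving completeness from cocompactness plus parallel volume, and in any case the statement does not apply to the non-flat locally homogeneous connections that occur here. The paper proves completeness case by case: Nagano--Yagi for the flat torsion-free model, the Lorentzian Bieberbach theorem of Carri\`ere--Dalbo for the $SOL$ model with flat invariant Lorentz metric, and for the unipotent-isotropy models an argument in which the abelian holonomy group $\Gamma$ and its Zariski closure produce global Killing fields $\tilde X,\tilde K$ generating a two-dimensional abelian group with an open orbit in $G/I$; since $vol(\tilde X,\tilde K)$ is a nonzero constant on that orbit and hence on its closure, the open orbit is closed, equals $G/I$, and the developing map is a covering onto the simply connected model. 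That same construction (global Killing fields from the Zariski closure of $\Gamma$, together with the proof that this closure is not contained in the subgroup preserving the leaves of the invariant foliation) is what yields homogeneity up to finite cover; your appeal to the centralizer of $\rho(\ZZ^2)$ cannot be run before completeness and the correct identification of $G$ are in place.
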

 
 The composition of the article is the following. In section~\ref{section2} we introduce the basic facts about rigid geometric structures and prove theorem~\ref{surfaces}. Section~\ref{section3} deals with global phenomena and proves theorem~\ref{projective}. Theorems~\ref{affine connection1}  and~\ref{sur tore} will be proved in sections~\ref{section4}
 and~\ref{section 5} respectively.

 \section{Locally homogeneous rigid geometric structures}  \label{section2}
 
 In the sequel all  manifolds will be  supposed to be smooth and connected. The geometric structures will be  also assumed to be smooth.

Consider  a  $n$-manifold $M$ and, for all integers $r \geq 1$,   consider the associated bundle $R^r(M)$ of $r$-frames, which is a $D^r(\RR^n)$-principal bundle  over $M$, with $D^r(\RR^n)$ the real algebraic group of $r$-jets at the origin of local diffeomorphisms  of  $\RR^n$ fixing $0$ (see ~\cite{AVL}).

Let us consider, as in~\cite{DG,Gro}, the following 

\begin{defi} A {\it  geometric structure} (of order $r$)  $\phi$ on a   $M$ is a $D^r(\RR^n)$-equivariant smooth   map from  $R^r(M)$ to a real algebraic
variety $Z$ endowed with an algebraic action of $D^r(\RR^n)$.
\end{defi}

Riemannian and pseudo-Riemannian metrics, affine and projective connections and the most encountered geometric objects in differential geometry are known
to verify the previous definition~\cite{DG,Gro, Benoist2, CQ, Feres}. For instance, if the image of $\phi$ in $Z$ is exactly one orbit, this orbit identifies with a  homogeneous space  $D^r(\RR^n)/G$, where $G$ is the stabilizer
of a chosen point in the image of $\phi$. We get then a reduction  of the structure group of $R^r(M)$  to the subgroup $G$. This is exactly the classical definition of a $G$-structure (of order $r$): the case  $r=1$ and $G=O(n,\RR)$
corresponds to a Riemannian metric and that of  $r=2$ and $G=GL(n,\RR)$ gives a torsion free affine connection~\cite{Kobayashi,AVL}.

\begin{defi}  A (local) Killing field of $\phi$ is a (local) vector field on $M$ whose canonical lift to $R^r(M)$ preserves $\phi$.
\end{defi}

Following Gromov~\cite{Gro,DG} we define rigidity as:

\begin{defi} A geometric structure $\phi$ is rigid  at order $k \in \NN$, if  local Killing fields are determined by their   $k$-order jet at  any chosen  point in $M$.
\end{defi}

Consequently, in the neighborhood  of  any point of $M$, the algebra of Killing fields  of  a rigid geometric  structure is finite dimensional.

Recall that (pseudo)-Riemannian metrics, as well as affine and projective connections, or conformal structures in dimension $\geq 3$ are known to be rigid~\cite{DG,Gro, Benoist2, CQ, Feres, Kobayashi}.

\begin{defi} The geometric structure $\phi$ is said to be locally homogeneous on the open subset $U \subset M$ if for any tangent vector $V  \in T_{u}U$ there exists a local Killing field $X$ of
$\phi$ such that $X(u)=V$.
\end{defi}

The Lie algebra of Killing fields is the same at the neighborhood of any point of a locally homogeneous geometric structure $\phi$. In this case it will be simply called {\it the Killing
algebra of $\phi$.}

Let $G$ be a connected Lie group and $I$ a closed subgroup of $G$. Recall  that $M$ is said to be {\it locally modelled} on the homogeneous space $G/I$ if it admits
an atlas with open sets diffeomorphic to open sets in $G/I$ such that the transition maps are given by restrictions of elements in $G$.

In this situation  any $G$-invariant geometric structure $\tilde{\phi}$ on $G/I$ uniquely defines a locally homogeneous geometric structure $\phi$ on $M$ which is locally isomorphic to $\tilde{\phi}$. 

We recall that there exists locally homogeneous Riemannian metrics on $5$-dimensional manifolds which are not locally isometric to a  invariant Riemannian metric on a homogeneous space~\cite{K,LT}. However this phenomenon cannot happen
in lower dimension:

 \begin{theorem} \label{model} Let $M$ be a manifold of dimension $\leq 4$  bearing a locally homogeneous rigid geometric structure $\phi$ with Killing algebra $\mathfrak{g}$. Then $M$ is locally modelled on a homogeneous space $G/I$, where $G$ is a connected Lie group with Lie algebra $\mathfrak{g}$ and $I$ is a closed subgroup of $G$. Moreover, $(M, \phi)$ is locally isomorphic to a $G$-invariant geometric structure on $G/I$.
 \end{theorem}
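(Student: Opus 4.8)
The plan is to realise the germ of $(M,\phi)$ at a point as that of a $G$-invariant structure on a homogeneous space, and then to propagate this identification over all of $M$. Throughout set $n:=\dim M\le 4$ and fix a base point $x_0\in M$. By rigidity the Killing algebra $\mathfrak g$ is finite dimensional, and local homogeneity says exactly that the evaluation map $\mathrm{ev}_{x_0}\colon \mathfrak g\to T_{x_0}M$, $X\mapsto X(x_0)$, is onto. Its kernel $\mathfrak i:=\ker \mathrm{ev}_{x_0}$, the germs of local Killing fields vanishing at $x_0$, is the isotropy subalgebra, with $\dim\mathfrak i=\dim\mathfrak g-n$. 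I would first note that the pair $(\mathfrak g,\mathfrak i)$ is \emph{effective}: an ideal of $\mathfrak g$ contained in $\mathfrak i$ would consist of Killing fields vanishing on a neighbourhood of $x_0$, hence vanishing identically by rigidity. Then I integrate: the homomorphism from $\mathfrak g$ into germs of vector fields near $x_0$ integrates, by the classical theorem on integration of infinitesimal actions, to a local action near $x_0$ of the connected simply connected group $G$ with Lie algebra $\mathfrak g$. This local action is transitive since $\mathrm{ev}_{x_0}$ is onto, and its infinitesimal isotropy is $\mathfrak i$, so a neighbourhood of $x_0$ is presented as an open piece of the (a priori only local) homogeneous space $G/I^{0}$, where $I^{0}$ is the connected subgroup with Lie algebra $\mathfrak i$.

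The heart of the matter, and the only step where $n\le 4$ is used, is to upgrade $I^{0}$ to a genuine \emph{closed} subgroup, i.e. to show that the local model closes up to a Hausdorff homogeneous space $G/I$ with $I$ closed and $\mathrm{Lie}(I)=\mathfrak i$. The obstruction is precisely that the immersed subgroup $I^{0}\hookrightarrow G$ need not be embedded, and this cannot be resolved by the naive argument: in a merely \emph{local} action the flows of the local Killing fields in $\mathfrak i$ are defined only near $x_0$, so one cannot conclude that an element of the closure $\overline{I^{0}}$ close to the identity fixes $x_0$. Passing to $\overline{\mathfrak i}:=\mathrm{Lie}(\overline{I^{0}})$, Malcev's description of closures of connected subgroups shows that $\mathfrak i$ is an ideal of $\overline{\mathfrak i}$ with $\overline{\mathfrak i}/\mathfrak i$ abelian, the extra directions exponentiating to a nontrivial toral subgroup acting on $T_{x_0}M$ through the linear isotropy. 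I would rule this out by analysing the linear isotropy representation $\mathfrak i\to\mathfrak{gl}(T_{x_0}M)=\mathfrak{gl}(n,\RR)$ together with its higher prolongations, which is injective modulo the order-$k$ rigidity, and checking, for $n\le 4$ and for the effective pair $(\mathfrak g,\mathfrak i)$, that no such irrational toral winding can occur; hence $\overline{\mathfrak i}=\mathfrak i$ and $I:=\overline{I^{0}}$ is closed with the correct Lie algebra. This is exactly the point that breaks down in dimension $5$, where the locally homogeneous metrics of Kowalski and of Lastaria--Tricerri~\cite{K,LT} produce a non-closed isotropy; I expect this low-dimensional case analysis to be the main obstacle.

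Once $I$ is closed, $G/I$ is a smooth manifold and the local orbit map $g\mapsto g\cdot x_0$ descends to a local diffeomorphism from a neighbourhood of the base coset $eI$ onto a neighbourhood of $x_0$. Pushing the germ of $\phi$ forward through the inverse yields a structure near $eI$ that is automatically invariant under the local $G$-action, since every $X\in\mathfrak g$ is a Killing field of $\phi$; by transitivity this extends to a $G$-invariant rigid geometric structure $\widetilde\phi$ on $G/I$ that is locally isomorphic to $\phi$ near $x_0$.

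Finally I would globalise. By local homogeneity the isotropy pair $(\mathfrak g,\mathfrak i_y)$ at any $y\in M$ is conjugate in $G$ to $(\mathfrak g,\mathfrak i)$, so the construction above identifies a neighbourhood of every $y$ with an open subset of $G/I$ carrying $\widetilde\phi$. On an overlap the two identifications differ by a local automorphism of $\widetilde\phi$; because $\mathfrak g$ is the \emph{full} Killing algebra of $\widetilde\phi$ and $\widetilde\phi$ is rigid, such a local automorphism is determined by a finite jet realised by an element of $G$, hence coincides locally with a left translation by an element of $G$. These charts therefore form an atlas exhibiting $M$ as locally modelled on $G/I$ with transition maps in $G$, and they carry $\phi$ to $\widetilde\phi$, which is the assertion of the theorem.
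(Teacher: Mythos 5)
Your skeleton matches the paper's: pass to the simply connected group $G$ integrating the Killing algebra $\mathfrak{g}$, show the isotropy subalgebra $\mathfrak{i}$ integrates to a \emph{closed} subgroup $I$, and then transport $\phi$ to a $G$-invariant structure on $G/I$ while exhibiting $M$ as locally modelled on it. You have also correctly isolated the crux --- closedness of $I$ is the only place the hypothesis $\dim M\le 4$ enters, and it is exactly what fails for the five-dimensional examples of Kowalski and Lastaria--Tricerri. But at that crux your proposal stops being a proof: you write that you ``would rule this out by analysing the linear isotropy representation \ldots and checking, for $n\le 4$ \ldots, that no such irrational toral winding can occur,'' and you concede that this case analysis is ``the main obstacle.'' That analysis is never carried out, and it is not routine: the linear isotropy representation of $\mathfrak{i}$ on $T_{x_0}M$ need not be faithful for a general rigid structure (injectivity only holds at the level of higher-order jets), and controlling the Malcev closure $\overline{\mathfrak{i}}$ through prolongations for an arbitrary effective pair $(\mathfrak{g},\mathfrak{i})$ with $\mathrm{codim}\,\mathfrak{i}\le 4$ is essentially the content of a nontrivial theorem. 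So the proposal has a genuine gap at its central step.

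The paper closes this gap by citation rather than by computation: Mostow's theorem (\emph{The extensibility of local Lie groups of transformations and groups on surfaces}, Ann.\ of Math.\ 52 (1950), chapter 5, p.\ 614) states that a subalgebra of codimension $\le 4$ in the Lie algebra of a connected simply connected Lie group always generates a closed subgroup. Quoting that result (or reproducing its case analysis) is what you need where you currently have an expectation. The remaining parts of your argument --- effectivity of $(\mathfrak{g},\mathfrak{i})$, integration of the infinitesimal action to a local transitive action, pushing $\phi$ forward to a $G$-invariant $\tilde\phi$ on $G/I$, and the patching argument using rigidity to show transition maps lie in $G$ --- are consistent with (and somewhat more detailed than) what the paper does, and are fine.
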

 
  \begin{proof} Let $\mathfrak{g}$ be the Killing algebra of $\phi$. Denote by $\mathfrak{I}$ the (isotropy) subalgebra of $\mathfrak{g}$ composed by  Killing fields vanishing at a given point in $M$. Let $G$ be the unique connected simply connected  Lie group
 with   Lie algebra  $\mathfrak{g}$. Since $\mathfrak{I}$ is of codimension $\leq 4$ in $\mathfrak{g}$, a result of Mostow~\cite{Mos} (chapter 5, page 614)  shows that  the Lie subgroup $I$  in  $G$ associated to $\mathfrak{I}$ is  {\it closed}. Then $\phi$ induces a $G$-invariant geometric structure $\tilde{\phi}$ on $G/I$ locally isomorphic to it. Moreover, $M$ is locally modelled on $G/I$. 
 \end{proof}
 
 \begin{remark} \label{model simply connected} By  the previous construction, $G$ is simply connected and $I$ is connected (and closed), which implies that $G/I$ is simply connected (see~\cite{Mos}, page 617,
 Corollary 1). In general, the $G$-action on $G/I$ admits a nontrivial discrete kernel. We can  assume that this action is effective  considering the quotient of $G$ and $I$ by the maximal normal subgroup of $G$ contained in $I$ (see proposition 3.1 in~\cite{Sharpe}).
 \end{remark}
 
 Theorem~\ref{surfaces} is a direct consequence of theorem~\ref{model} and of the following:
 
 \begin{proposition} A two dimensional homogeneous space $G/I$ of a connected simply connected Lie group $G$ is either the two  sphere with $G$ being $S^3$, or it bears an  action of a  two dimensional  subgroup of $G$  (isomorphic either to $\RR^2$, or to the affine group of the real line preserving orientation) which admits an open orbit.
 \end{proposition}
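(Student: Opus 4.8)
The plan is to turn the statement into a purely Lie-algebraic problem and then to produce, inside $\mathfrak{g}$, a two dimensional subalgebra transverse to the isotropy. By Remark~\ref{model simply connected} we may assume the $G$-action effective and $I$ connected, so that $G/I$ is a \emph{simply connected} surface; by the classification of simply connected surfaces it is diffeomorphic either to the sphere $S^2$ or to the plane $\RR^2$. Let $\mathfrak{I}\subset\mathfrak{g}$ be the isotropy subalgebra at the base point, so that $\dim(\mathfrak{g}/\mathfrak{I})=2$; the orbit through $eI$ of the connected subgroup $H\subset G$ with Lie algebra $\mathfrak{h}$ is open precisely when $\mathfrak{h}+\mathfrak{I}=\mathfrak{g}$. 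Since every two dimensional real Lie algebra is isomorphic to $\RR^2$ or to $\mathfrak{aff}(\RR)$, the proposition reduces to the dichotomy: \emph{either $\mathfrak{g}$ admits a two dimensional subalgebra $\mathfrak{h}$ with $\mathfrak{h}\cap\mathfrak{I}=0$ (equivalently $\mathfrak{h}+\mathfrak{I}=\mathfrak{g}$), or $G/I=S^2$ and $G=S^3$.}

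To produce $\mathfrak{h}$ I would pick a subalgebra $\mathfrak{h}_0\subseteq\mathfrak{g}$ of \emph{minimal dimension} among those with $\mathfrak{h}_0+\mathfrak{I}=\mathfrak{g}$. A one dimensional subalgebra cannot surject onto the two dimensional $\mathfrak{g}/\mathfrak{I}$, so $\dim\mathfrak{h}_0\geq 2$; and if $\dim\mathfrak{h}_0=2$ then $\mathfrak{h}_0\cap\mathfrak{I}=0$ and we are done. The heart of the argument is therefore the analysis of a minimal transitive $\mathfrak{h}_0$ of dimension $\geq 3$, whose isotropy $\mathfrak{J}=\mathfrak{h}_0\cap\mathfrak{I}$ has codimension two and which, by minimality, contains \emph{no} proper transitive subalgebra. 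Passing to its effective quotient and invoking the Levi decomposition together with the classification of three dimensional real Lie algebras, I would show that $\mathfrak{h}_0\cong\mathfrak{su}(2)$. Indeed a solvable $\mathfrak{h}_0$ always carries a two dimensional subalgebra transverse to $\mathfrak{J}$ (obtained from a hyperplane containing the derived algebra), while the only non-solvable three dimensional candidates are $\mathfrak{sl}(2,\RR)$ and $\mathfrak{su}(2)$. For $\mathfrak{sl}(2,\RR)$ a one dimensional isotropy is elliptic, hyperbolic or parabolic, and in each case either a Borel subalgebra $\mathfrak{aff}(\RR)$ is transverse to it (contradicting minimality) or the corresponding orbit is not simply connected (contradicting the shape of $G/I$); this removes $\mathfrak{sl}(2,\RR)$. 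Finally, a compact group has a fixed point on $\RR^2$, so $\mathfrak{su}(2)$ cannot act transitively on the plane, and this case forces $G/I=S^2$.

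It then remains to upgrade $\mathfrak{h}_0\cong\mathfrak{su}(2)$ to the assertion $G=S^3$. Since $G/I=S^2$ is compact, a maximal compact subgroup $K\subset G$ still acts transitively on it, and the only connected compact group acting transitively and effectively on $S^2$ is $SO(3)$, whose Lie algebra is $\mathfrak{su}(2)$. If $\mathfrak{g}$ were strictly larger it would be a non-compact transitive effective algebra on $S^2$, hence contained in the conformal algebra $\mathfrak{sl}(2,\CC)$ of $S^2=\CC P^1$; but $\mathfrak{sl}(2,\CC)$ contains the abelian algebra of translations $\CC\cong\RR^2$, whose orbit is the open affine chart and which is therefore transverse to the isotropy, putting us back in the first alternative. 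Hence $\mathfrak{g}=\mathfrak{su}(2)$, and as $G$ is connected and simply connected this yields $G=SU(2)=S^3$, with $I$ a circle and $G/I=S^2$.

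The main obstacle is the middle step: forcing a minimal transitive subalgebra of dimension $\geq 3$ to be $\mathfrak{su}(2)$. This relies on the classification of low dimensional Lie algebras and of their transitive actions on surfaces, and it is exactly here that the simple connectivity of $G/I$ is used — both to discard the various $\mathfrak{sl}(2,\RR)$-homogeneous surfaces that are not simply connected, and to exclude minimal transitive subalgebras of dimension $\geq 4$, for which one must show (again via a hyperplane containing the derived algebra, or via a Levi factor) that a proper transitive subalgebra always exists.
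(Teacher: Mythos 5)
Your route is genuinely different from the paper's: the paper disposes of this proposition in one sentence by invoking Lie's classification of transitive Lie algebra actions on surfaces, whereas you try to prove the dichotomy from scratch by passing to a \emph{minimal} transitive subalgebra $\mathfrak{h}_0$ and classifying it. That is a reasonable and more self-contained plan, but it founders at the solvable step, and the failure is not a missing detail -- it is a step that is false as stated. Your claim that a solvable transitive $\mathfrak{h}_0$ always contains a transitive hyperplane through the derived algebra requires $\mathfrak{J}\not\subseteq[\mathfrak{h}_0,\mathfrak{h}_0]$: a hyperplane $\mathfrak{k}\supseteq[\mathfrak{h}_0,\mathfrak{h}_0]$ satisfies $\mathfrak{k}+\mathfrak{J}=\mathfrak{h}_0$ iff $\mathfrak{J}\not\subseteq\mathfrak{k}$, and when the derived algebra has codimension one and contains the isotropy there is no such choice. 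This situation actually occurs: take $\mathfrak{g}=\langle \partial_x,\ \sin x\,\partial_u,\ \cos x\,\partial_u\rangle\cong\mathfrak{e}(2)$ acting on $\RR^2$ (equivalently, the universal cover of the Euclidean group acting on the universal cover of the space of oriented lines, the isotropy being a one-parameter translation subgroup; this is an entry of Lie's list, the family $\langle\partial_x,\eta_1(x)\partial_u,\dots,\eta_k(x)\partial_u\rangle$ with the $\eta_i$ solving a constant-coefficient ODE with non-real characteristic roots). Here the action is effective and transitive, the derived algebra is the codimension-one abelian ideal $\langle\sin x\,\partial_u,\cos x\,\partial_u\rangle$, the isotropy lies inside it, and a short computation (a two-dimensional subalgebra meeting $\partial_x$ would force a real eigenvector of a rotation) shows this ideal is the \emph{only} two-dimensional subalgebra; its orbits are the vertical lines, hence never open. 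So $\mathfrak{h}_0=\mathfrak{e}(2)$ is minimal transitive, solvable, of dimension $3$, not isomorphic to $\mathfrak{su}(2)$, and admits no two-dimensional subalgebra with an open orbit. Your reduction ``minimal of dimension $\geq 3$ $\Rightarrow\mathfrak{su}(2)$'' therefore fails, and this family must be excluded by some input beyond pure Lie algebra theory (for instance by arguing that the \emph{full} Killing algebra of a rigid structure invariant under such an action is strictly larger); note that the paper's own appeal to Lie's classification is equally silent on exactly this case, so you cannot repair the gap merely by citing the same source.

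Two smaller points. In the $\mathfrak{sl}(2,\RR)$ case you do not need the case division you describe: for any one-dimensional isotropy $\RR h$ (elliptic, hyperbolic or parabolic) a Borel subalgebra not containing $h$ meets $\RR h$ trivially, since $\mathfrak{b}\cap\RR h$ is either $0$ or $\RR h$; so $\mathfrak{sl}(2,\RR)$ is never minimal and simple connectivity is not needed there. And in the last step, a transitive effective algebra on $S^2$ strictly containing $\mathfrak{so}(3)$ need not sit inside $\mathfrak{sl}(2,\CC)$: the projective action of $\mathfrak{sl}(3,\RR)$ on $S^2$ (rays in $\RR^3$) is another possibility; it also contains a transverse abelian $\RR^2$ (translations of an affine chart), so the conclusion survives, but the claimed containment should be corrected or replaced by a case check.
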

 
 Here the $3$-sphere $S^3$ is endowed with its standard structure of Lie group~\cite{Kir, Olver}.
 
  \begin{proof} This follows directly from Lie's classification of the two dimensional homogeneous space
  (see the list  in~\cite{Olver} or~\cite{Mos}). More precisely, any (finite dimensional)  Lie algebra acting transitively on a surface either  admits a two dimensional subalgebra
  acting simply transitively (or equivalently, which trivially intersects the isotropy subalgebra), or it is isomorphic to the Lie algebra of $S^3$ acting on the real sphere $S^2$
  by the standard action.
  \end{proof}
  
  \section{Global rigidity results}  \label{section3}
  
  Recall that a manifold $M$ locally modelled on a homogeneous space $G/I$ gives rise to a {\it developing map} defined on its universal cover $\tilde M$
  with values in $G/I$ and to a {\it holonomy morphism} $\rho : \pi_{1}(M) \to G$ (well defined up to conjugacy in $G$)~\cite{Sharpe}. The developing map is a  local diffeomorphism which is equivariant with respect to the action of the fundamental group
  $\pi_{1}(M)$ on $\tilde M$ (by deck transformations) and on $G$ (by its image through $\rho$).
  
    The manifold $M$ is said to be {\it complete} if the developing map is a global diffeomorphism. In this case $M$ is diffeomorphic to a quotient of $G/I$ by a discrete subgroup
  of   $G$ acting properly and without fixed points.
    
    We give now a last definition:
    
    \begin{defi} A geometric structure $\phi$  on $M$ is said to be of {\it Riemannian type} if there exists a Riemannian metric on $M$ preserved by all Killing fields of $\phi$.
    \end{defi}
    
    Roughly speaking a locally homogeneous geometric structure is of Riemannian type if it is constructed by putting together a Riemannian metric and any other geometric structure (e.g. a vector field).
    Since Riemannian metrics are rigid, a geometric structure of Riemannian type it is automaticaly rigid.
    
    With this terminology we have the following corollary of  theorem~\ref{model}.
    
    \begin{theorem}  \label{isotropie compacte} Let  $M$ be a compact  manifold  of dimension $\leq 4$  equipped  with a locally homogeneous  geometric structure $\phi$ of Riemannian type. Then $M$ is isomorphic to a quotient of a  homogeneous space $G/I$, endowed with a $G$-invariant geometric structure, by a lattice in $G$.
 \end{theorem}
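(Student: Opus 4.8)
The plan is to combine Theorem~\ref{model} with the completeness of the auxiliary Riemannian metric that comes with the hypothesis ``of Riemannian type''. First I would fix a Riemannian metric $g$ on $M$ preserved by every Killing field of $\phi$. By Theorem~\ref{model}, $M$ is locally modelled on $G/I$ with $G$ simply connected and $I$ connected and closed, and $\phi$ is locally isomorphic to a $G$-invariant structure $\tilde\phi$ on $G/I$; since the local isomorphisms conjugate Killing fields, the invariant metric $g$ corresponds to a $G$-invariant Riemannian metric $h$ on $G/I$. In particular the isotropy group $I$ fixes a point $p$ and preserves the inner product $h_p$, so the isotropy representation lands in the orthogonal group $O(T_p(G/I))$. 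Assuming, as we may by Remark~\ref{model simply connected}, that the $G$-action is effective, this representation is faithful, hence $I$ is isomorphic to a closed subgroup of a compact group and is therefore compact. This already forces the homogeneous Riemannian manifold $(G/I,h)$ to be complete.

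Next I would deduce completeness of the $(G,G/I)$-structure on $M$. Because $M$ is compact, $g$ is complete by Hopf--Rinow, and so is its lift $\tilde g$ to the universal cover $\tilde M$. The developing map $D:\tilde M\to G/I$ is a local diffeomorphism, and since it realizes the local isomorphism between $\phi$ and $\tilde\phi$ it carries $\tilde g$ to $h$; that is, $D$ is a local isometry from the complete manifold $(\tilde M,\tilde g)$ to $(G/I,h)$. A local isometry out of a complete Riemannian manifold is automatically a covering map, and by Remark~\ref{model simply connected} the space $G/I$ is simply connected, so $D$ is a diffeomorphism. Hence $M$ is complete and is diffeomorphic to $\Gamma\backslash G/I$, where $\Gamma=\rho(\pi_{1}(M))$ is the image of the holonomy morphism, acting on $G/I$ freely and properly discontinuously.

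It then remains to check that $\Gamma$ is a lattice. Since $I$ is compact, the free and properly discontinuous action of $\Gamma$ on $G/I$ forces $\Gamma$ to be discrete in $G$ and to act freely on $G$ by left translations, with $\Gamma\cap gIg^{-1}=\{e\}$ for every $g\in G$. The fibration $\Gamma\backslash G\to\Gamma\backslash G/I=M$ then has compact fiber $I$ over the compact base $M$, so $\Gamma\backslash G$ is compact; a discrete subgroup with compact quotient has finite Haar covolume and is thus a (uniform) lattice, finishing the argument.

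I expect the main obstacle to be the identification step in the middle of the first two paragraphs: verifying carefully that the developing map intertwines the two Riemannian metrics, i.e.\ that the auxiliary metric of Riemannian type genuinely descends to a $G$-invariant metric on the model $G/I$ through the local isomorphisms of Theorem~\ref{model}, and consequently that $D$ is a bona fide local isometry. This is precisely what unlocks the ``local isometry from a complete manifold is a covering'' theorem, and hence completeness. By contrast the compactness of $I$ and the final lattice count are comparatively routine once this intertwining is established.
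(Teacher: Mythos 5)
Your proposal is correct and follows essentially the same route as the paper: deduce compactness of the isotropy $I$ from the $G$-invariant Riemannian metric on $G/I$ supplied by the ``Riemannian type'' hypothesis, then obtain completeness via Hopf--Rinow (the developing map being a local isometry out of a complete manifold onto the simply connected model, hence a diffeomorphism). You merely spell out the details that the paper leaves to the citation of~\cite{Sharpe}, including the routine final check that the holonomy group is a uniform lattice.
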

 
 \begin{proof} By theorem~\ref{model},  $M$ is locally modelled on a homogeneous space $G/I$. Since $\phi$ is of Riemannian type, $G/I$ admits a $G$-invariant Riemannian metric.
 This implies that the isotropy $I$ is compact. 
 
 On the other hand, compact manifolds locally modelled on homogeneous space $G/I$ with compact isotropy group $I$ are  classically known to be complete: this is a consequence of the
  Hopf-Rinow's geodesical completeness~\cite{Sharpe}.
  \end{proof}
  
  \begin{remark} A $G$-invariant geometric structure on $G/I$ is of Riemannian type if and only if $I$ is compact.
  \end{remark}

  Recall that a homogeneous space $G/I$ is said to be {\it imprimitive} if the canonical  $G$-action preserves a non trivial foliation.
  
    \begin{proposition}  \label{imprimitive} If $M$ is a compact surface locally modelled on an imprimitive homogeneous space, then $M$ is a torus.
 \end{proposition}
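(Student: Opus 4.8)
The plan is to transport the invariant foliation from the model $G/I$ to $M$ and then read off the topology of $M$ from its Euler characteristic, using orientability to exclude the Klein bottle.

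First I would use imprimitivity directly: by definition the canonical $G$-action on $G/I$ preserves a nontrivial foliation $\mathcal{F}$, which on the two dimensional space $G/I$ must be a foliation by curves (one dimensional leaves). Since $G$ acts transitively while preserving $\mathcal{F}$, this foliation is everywhere nonsingular. Because $M$ is locally modelled on $G/I$, its charts take values in $G/I$ with transition maps given by restrictions of elements of $G$; as each such element preserves $\mathcal{F}$, the pulled-back foliations agree on overlaps and glue to a global nonsingular one dimensional foliation $\bar{\mathcal{F}}$ on $M$. Equivalently, $\mathcal{F}$ pulls back under the developing map $D\colon \tilde{M}\to G/I$ to a $\pi_{1}(M)$-invariant foliation of $\tilde{M}$, which descends to $M$.

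Next, a compact surface carrying a nonsingular line field has vanishing Euler characteristic by the Poincar\'e--Hopf theorem; the tangent line field of $\bar{\mathcal{F}}$ is such a field, so $\chi(M)=0$. Hence $M$ is diffeomorphic either to the two dimensional torus or to the Klein bottle. It then remains to exclude the Klein bottle, which I would do by showing that $M$ is orientable. Here I would invoke the normalized model coming from Theorem~\ref{model}: one may take $G$ connected (indeed simply connected) and, by Remark~\ref{model simply connected}, $I$ connected. For a connected group $G$ with connected isotropy $I$, the homogeneous space $G/I$ carries a $G$-invariant orientation, because $I$, being connected, acts on the tangent space $T_{eI}(G/I)$ by orientation-preserving linear maps. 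The transition maps of the structure on $M$ are then orientation-preserving, so $M$ inherits an orientation; a closed orientable surface with $\chi(M)=0$ is the torus, which proves the proposition.

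The only real subtlety lies in this last step: the conclusion that $M$ is a torus, rather than merely that $\chi(M)=0$, rests on excluding the Klein bottle, and this is exactly where connectedness of the isotropy $I$ enters to guarantee orientability. If one does not normalize the model so that $I$ is connected, the Klein bottle genuinely arises, so this hypothesis is essential. Alternatively, orientability can be obtained directly from $\bar{\mathcal{F}}$: connectedness of $I$ orients both the tangent line bundle of $\mathcal{F}$ and its normal bundle $G$-invariantly (a connected group acts on a real line by orientation-preserving maps), and these two orientations descend to $M$ through the developing map, again forcing $M$ to be orientable and hence the torus.
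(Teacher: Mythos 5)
Your proof is correct and follows essentially the same route as the paper: descend the $G$-invariant one dimensional foliation to a nonsingular foliation on $M$ and apply the Poincar\'e--Hopf theorem. You are in fact more careful than the paper, which passes directly from $\chi(M)=0$ to ``$M$ is a torus'' without explicitly excluding the Klein bottle; your orientability argument via connectedness of the isotropy $I$ (guaranteed by the normalization in Theorem~\ref{model} and Remark~\ref{model simply connected}) fills that gap.
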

 
 \begin{proof} The $G$-invariant  one dimensional foliation on $G/H$ descends on $M$ to  a non singular foliation. Hopf-Poincar\'e's   theorem implies then that the
 genus of $M$ equals one: $M$ is a torus.
 \end{proof}
 
 Note  that the results of~\cite{KOV,AM-K} imply in particular:
 
 \begin{theorem} \label{imprimitive connections} A locally homogeneous affine connection  on a surface  which is neither  torsion free and flat, nor  of Riemannian type,  is locally modelled on an imprimitive homogeneous space.
 \end{theorem}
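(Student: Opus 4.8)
The plan is to recast the statement infinitesimally and then reduce everything to the isotropy representation, reading off the remaining delicate case from the classification of \cite{KOV,AM-K}. By Theorem~\ref{model} the connection $\nabla$ is locally modelled on a homogeneous surface $G/I$, where $G$ integrates the Killing algebra $\mathfrak{g}$ and $I$ is connected (Remark after Theorem~\ref{model}); write $\mathfrak{I}\subset\mathfrak{g}$ for the isotropy subalgebra and $V=T_o(G/I)\cong\mathfrak{g}/\mathfrak{I}\cong\RR^2$. A nontrivial $G$-invariant foliation is the same as a $G$-invariant line field, which (integrability being automatic for a $1$-dimensional distribution on a surface) is the same as an $\mathfrak{I}$-invariant line in $V$. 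Since the isotropy representation $\mathfrak{I}\hookrightarrow\mathfrak{gl}(V)$ of an affine connection is faithful (a Killing field with vanishing $1$-jet at $o$ is trivial), the whole assertion is equivalent to its contrapositive: \emph{if $\mathfrak{I}$ preserves no line in $V$ (the primitive case), then $\nabla$ is torsion-free and flat, or of Riemannian type.} I would prove this by analysing the torsion and curvature as $\mathfrak{I}$-invariant tensors.

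First I would dispose of the torsion. At $o$ it is an $\mathfrak{I}$-invariant element of $\mathrm{Hom}(\Lambda^2V,V)$; as $\Lambda^2V$ is one dimensional it is determined by a single vector $v\in V$, and invariance reads $Xv=\mathrm{tr}(X)\,v$ for all $X\in\mathfrak{I}$. If $v\neq0$ then $\RR v$ is an $\mathfrak{I}$-invariant line, contradicting primitivity; hence the torsion vanishes. So every primitive model is automatically torsion-free, and it remains only to decide flatness versus Riemannian type, which is governed by the curvature $\mathcal R\in\mathfrak{gl}(V)$, the single component of the $\mathfrak{I}$-invariant element of $\mathrm{Hom}(\Lambda^2V,\mathfrak{gl}(V))$, subject to $[X,\mathcal R]=\mathrm{tr}(X)\,\mathcal R$ for all $X\in\mathfrak{I}$.

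Next I would list, up to conjugacy, the subalgebras of $\mathfrak{gl}(V)$ with no invariant line: $\mathfrak{so}(2)$; the elliptic lines $\RR(a\,\mathrm{id}+bJ)$ with $a\neq0$; the conformal algebra $\langle\mathrm{id},J\rangle$; $\mathfrak{sl}(2,\RR)$; and $\mathfrak{gl}(2,\RR)$ (a short check: a $1$-dimensional primitive isotropy must be elliptic, a $2$-dimensional one normalizes its traceless part and so must be conformal, and the $3$-dimensional primitive subalgebra is $\mathfrak{sl}(2,\RR)$). If $\mathfrak{I}=\mathfrak{so}(2)$ the isotropy preserves a Euclidean inner product on $V$, which spreads by homogeneity to a $G$-invariant metric, so $\nabla$ is of Riemannian type. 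Whenever $\mathfrak{I}$ contains the scaling $\mathrm{id}$ or an elliptic element of nonzero trace, the equation $[X,\mathcal R]=\mathrm{tr}(X)\,\mathcal R$ has no nonzero solution: the eigenvalues of $\mathrm{ad}(X)$ on $\mathfrak{gl}(V)$ are $0$ and $\pm(\lambda_1-\lambda_2)$, purely imaginary or zero for such $X$, whereas $\mathrm{tr}(X)$ is a nonzero real number; hence $\mathcal R=0$ and $\nabla$ is flat. This settles the elliptic-with-trace, conformal, and $\mathfrak{gl}(2,\RR)$ cases.

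The one genuinely delicate case — and the main obstacle — is $\mathfrak{I}\cong\mathfrak{sl}(2,\RR)$ acting irreducibly. Here $\mathrm{tr}$ vanishes on $\mathfrak{I}$, so the constraint only forces $\mathcal R$ into the centralizer $\RR\,\mathrm{id}$; the candidate $\mathcal R=c\,\mathrm{id}$ satisfies the first Bianchi identity and has antisymmetric Ricci tensor, so it is neither killed by soft curvature identities nor does it produce an invariant metric. To eliminate it I would pass to the local model $\mathfrak{g}=\mathfrak{sl}(2,\RR)\ltimes\RR^2$: translation invariance makes the Christoffel symbols constant and $SL(2,\RR)$-invariance places them in the invariants of $V\otimes\mathrm{Sym}^2V^{*}$, a representation containing no trivial summand; hence the Christoffel symbols vanish and $\nabla$ is flat. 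This is exactly the point at which one invokes \cite{KOV,AM-K}, whose classification shows that the affine Killing algebra of a locally homogeneous connection on a surface has dimension $2$, $3$, $4$, or $6$ (never $5$), and whose explicit list confirms that every model outside the flat torsion-free one and the Riemannian-type ones admits an invariant line field, hence is imprimitive.
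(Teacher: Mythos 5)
Your proposal is correct, but it reaches the conclusion by a genuinely different route from the paper. The paper's proof is essentially a citation: it inspects the Arias-Marco--Kowalski table of normal forms in \cite{AM-K} (cf.\ also \cite{KOV}) and observes that in every case other than the flat torsion-free connection and the constant-curvature Levi-Civita connections there is a Killing field outside the isotropy that is normalized by the whole Killing algebra; since such a field is nowhere vanishing on $G/I$, its direction spans the required $G$-invariant line field. You instead work entirely with the linear isotropy representation $\mathfrak{I}\subset\mathfrak{gl}(V)$ (faithful by $1$-rigidity of connections) and prove the contrapositive by representation theory: primitivity kills the torsion because $\mathrm{Hom}(\Lambda^2V,V)$ is $V$ twisted by the determinant character, so a nonzero invariant torsion would single out an invariant line; the list of line-free subalgebras of $\mathfrak{gl}(2,\RR)$ is short; any isotropy containing an element of nonzero trace forces $\mathcal R=0$ by comparing $\mathrm{tr}(X)$ with the spectrum of $\mathrm{ad}(X)$; $\mathfrak{so}(2)$ yields Riemannian type; and the residual irreducible $\mathfrak{sl}(2,\RR)$ case dies because $\mathfrak{g}$ is then forced to be $\mathfrak{sl}(2,\RR)\ltimes\RR^2$ (Levi decomposition plus the vanishing of $\mathfrak{sl}(2,\RR)$-invariants in $\mathrm{Hom}(\Lambda^2V,V)$) and $(V\otimes V^{*}\otimes V^{*})^{SL(2,\RR)}=0$ annihilates the constant Christoffel symbols --- the very invariant-tensor trick the paper itself deploys later in the proof of Lemma~\ref{dim isotropy}. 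What your approach buys is independence from the classification table (your closing appeal to \cite{KOV,AM-K} is redundant once the $\mathfrak{sl}(2,\RR)$ case is handled directly); what it costs is that two steps you leave as ``short checks'' must actually be written out for completeness: that a line-free $2$-dimensional subalgebra is conjugate to $\langle\mathrm{id},J\rangle$ (one must rule out a nilpotent or real-split traceless part, whose kernel or eigenlines would be invariant under the normalizer), and that the $5$-dimensional Killing algebra with $\mathfrak{sl}(2,\RR)$ isotropy is the affine semidirect product. Neither is a gap, only routine verification.
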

 
 Indeed, T. Arias-Marco and O. Kowalski study in~\cite{AM-K} all possible local normal forms for locally homogeneous affine connections on surfaces  with  the corresponding Killing algebra. Their results  are summarized in a nice table (see ~\cite{AM-K}, pages 3-5). In all cases, except for the Killing algebra of the (standard) torsion free affine connection and for Levi-Civita connections
 of Riemannian metrics of constant sectional curvature, there exists at least one   Killing field  non contained in the isotropy algebra which is normalized by the Killing algebra. Its direction defines then a $G$-invariant line field on $G/I$.

The previous result combined with  proposition~\ref{imprimitive}  imply  the main result in~\cite{Opozda}:

\begin{theorem} (Opozda) \label{Opozda} A compact surface $M$ bearing a  locally homogeneous affine connection of non Riemannian type is a torus.
\end{theorem}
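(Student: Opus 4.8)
The plan is to obtain the statement by assembling the two results that immediately precede it, and to dispatch separately the single configuration they do not reach. So let $\nabla$ be a locally homogeneous affine connection of non-Riemannian type on the compact surface $M$, with Killing algebra $\mathfrak{g}$, and split the argument according to whether or not $\nabla$ is simultaneously torsion free and flat.

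First I would feed $\nabla$ into Theorem~\ref{imprimitive connections}. Under the standing assumption that $\nabla$ is not of Riemannian type, that theorem asserts that as soon as $\nabla$ fails to be both torsion free and flat, it is locally modelled on an imprimitive homogeneous space $G/I$. Since $M$ is compact, Proposition~\ref{imprimitive} then applies verbatim: the $G$-invariant one dimensional foliation descends to a nonsingular foliation on $M$, and the Hopf--Poincar\'e argument forces $\chi(M)=0$, so that $M$ is a torus. Thus the bulk of the statement is a formal consequence of combining Theorem~\ref{imprimitive connections} with Proposition~\ref{imprimitive}, and the only case left open is that in which $\nabla$ is at once torsion free and flat.

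The remaining case is where the genuine difficulty lies, and it is the main obstacle: a flat torsion free connection is exactly an affine structure on $M$, i.e. an $(\mathrm{Aff}(2,\RR),\RR^2)$-structure in the sense of the developing map and holonomy morphism of Section~\ref{section3}, and here the imprimitivity mechanism is unavailable. Indeed the Killing algebra is then the full $\mathfrak{aff}(2,\RR)=\mathfrak{gl}(2,\RR)\ltimes\RR^2$, and $GL(2,\RR)$ acts transitively on the tangent directions at a point, so the associated homogeneous model carries no invariant line field and is primitive; one cannot route through Proposition~\ref{imprimitive}. To close this case I would instead bring in the classical external input, namely Benz\'ecri's theorem that a closed surface admitting an affine structure has vanishing Euler characteristic (equivalently, a Milnor--Wood type inequality for the flat $\mathrm{Aff}(2,\RR)$-bundle underlying the structure). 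This again gives $\chi(M)=0$ and hence that $M$ is a torus. Once this is granted, the full statement follows by assembling the three facts---Theorem~\ref{imprimitive connections}, Proposition~\ref{imprimitive}, and Benz\'ecri's theorem---according to whether or not $\nabla$ is torsion free and flat.
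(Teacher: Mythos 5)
Your proposal is correct and follows essentially the same route as the paper: the non-flat (or non-torsion-free) case is handled by combining Theorem~\ref{imprimitive connections} with Proposition~\ref{imprimitive}, and the torsion-free flat case is disposed of by the classical Milnor--Benz\'ecri theorem that a closed surface carrying an affine structure has vanishing Euler characteristic. The paper states this last input as a preliminary remark (citing Milnor and Benz\'ecri) rather than inside the proof, but the logical decomposition is identical to yours.
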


 Recall first that a well known result  of J. Milnor shows that a compact surface bearing a flat affine connection is a torus~\cite{Milnor} (see also~\cite{Benzecri}).
 
 \begin{proof}  In the case of a non flat connection, theorem~\ref{imprimitive connections} shows that $M$ is locally modelled on an imprimitive homogeneous space. Then
 proposition~\ref{imprimitive} finishes the proof.
 \end{proof}

 We give now  the proof of theorem~\ref{projective}.

 \begin{proof} The starting point of the proof is the classification obtained in~\cite{BMM} of all possible Killing algebras of a two dimensional  locally homogeneous projective connection. Indeed, Lemma 3 and Lemma 4  in~\cite{BMM} prove that  eiher $\phi$ is flat, or  the Killing algebra of $\phi$ is the Lie algebra of  one of the following Lie groups: $Aff(\RR)$ or  $SL(2,\RR)$. Moreover, in the last case the  isotropy is generated by a   one parameter unipotent  subgroup.
 
 Assume, by contradiction, that  the Killing algebra of $\phi$  is that of $Aff(\RR)$. Then, by theorem~\ref{model},  $M$ is locally modelled on $Aff(\RR)$ and, by theorem~\ref{isotropie compacte}, $M$ has to be a quotient of $Aff(\RR)$
 by a uniform lattice. Or, $Aff(\RR)$ is not unimodular and,  consequently, doesn't admit  lattices: a contradicition.
 
 Assume, by contradiction,  that the Killing algebra  is that of $SL(2,\RR)$. Then,  by theorem~\ref{model}, $M$ is locally modelled on $SL(2,\RR)/I$, with $I$ a  one parameter unipotent subgroup in $SL(2, \RR)$.

  Equivalently, $I$ is  conjugated to $\left(  \begin{array}{cc}
                                                                1  &   b \\
                                                                 0     &  1 \\
                                                                 \end{array} \right)$, with  $b \in \RR$. The  homogeneous space $SL(2,\RR)/I$ is difffeomorphic  to $\RR^2 \setminus \{0 \}$ endowed with the linear action
                                                                 of $SL(2, \RR)$.

                                                                 Notice  that the action of $SL(2,\RR)$ on $SL(2,\RR)/I$ preserves a nontrivial vector field. The expression of
   this vector field  in linear coordinates $(x_{1}, x_{2})$  on $\RR^2 \setminus \{ 0 \}$  is  $\displaystyle x_{1}     \frac{\partial}{\partial x_{1} }  +  x_{2}  \frac{\partial}{\partial x_{2}}$,  which is the fundamental generator
   of the one parameter group of homotheties. The flow of this vector field doesn't preserve the standard volume form and has a nonzero constant divergence:  $\lambda=2$.

 Let $X$ be the corresponding  vector field induced on $M$ and 
$div (X)$ the divergence of  $X$ with respect to the volume form 
 $vol$ induced on  $M$ by the standard $SL(2,\RR)$-invariant volume form  of $\RR^2 \setminus \{ 0 \}$. Recall that, by definition, $L_{X} vol =div(X) \cdot vol,$ where  $L_{X}$ is the Lie derivative. Here
 $div(X)$ is  the  constant function  $\lambda$.
 
Denote by $\Psi^t$ the time  $t$ of the flow generated by $X$. We get  $(\Psi^t)^*vol=exp( \lambda  t) \cdot vol,$ for all $t \in \RR$. But 
the  flow of  $X$ has to preserve the global volume  $\int_{M}vol$. This implies $\lambda =0$: a contradiction.

It remains that $\phi$ is flat. 
 \end{proof}

  We terminate the section with the following.

   \begin{proposition} If $M$ is a compact surface endowed with a locally homogeneous rigid geometric structure  admitting  a  semi-simple  Killing 
 algebra of dimension $3$, then either  $M$ is globally isomorphic  to a rotation invariant geometric structure on the two-sphere (up to a double cover), or the Killing algebra of $\phi$ preserves a 
 hyperbolic metric on $M$ (and $M$ is of genus $g \geq 2$).
 \end{proposition}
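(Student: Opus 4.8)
The plan is to split according to the isomorphism type of the three-dimensional semi-simple Killing algebra $\mathfrak{g}$, which is either $\mathfrak{su}(2)$ or $\mathfrak{sl}(2,\RR)$. By Theorem~\ref{model} I may assume that $M$ is locally modelled on a homogeneous surface $G/I$ with $G$ connected, $\mathrm{Lie}(G)=\mathfrak{g}$ and $I$ closed of dimension one; everything then hinges on the conjugacy class in $\mathfrak{g}$ of the one-dimensional isotropy subalgebra $\mathfrak{I}$, and on which classes survive the hypothesis that $M$ be compact.

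If $\mathfrak{g}\cong\mathfrak{su}(2)$ then $G=SU(2)$ and every one-dimensional subalgebra is a maximal torus, so $I$ is a circle, $G/I$ is the round two-sphere and $\phi$ is of Riemannian type. Theorem~\ref{isotropie compacte} then makes $M$ complete, so its universal Riemannian cover is the round $S^2$ and $M=\Gamma\backslash S^2$ for a finite group $\Gamma\subset O(3)$ acting freely. A nontrivial rotation fixes the two poles of $S^2$, so $\Gamma$ is trivial or the antipodal involution, whence $M=S^2$ or $M=\RR P^2$; in both cases $M$ is, up to the double cover $S^2\to\RR P^2$, the rotation invariant two-sphere. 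This is the first alternative.

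If $\mathfrak{g}\cong\mathfrak{sl}(2,\RR)$ there are three conjugacy classes for $\mathfrak{I}$. When $\mathfrak{I}$ is elliptic, after passing to the effective model (Remark~\ref{model simply connected}) $I$ becomes the compact $SO(2)\subset PSL(2,\RR)$, so $\phi$ is again of Riemannian type with $G/I=\mathbb{H}^2$; Theorem~\ref{isotropie compacte} then presents $M$ as a compact quotient of the hyperbolic plane by a cocompact lattice in $PSL(2,\RR)$, so the Killing algebra preserves a hyperbolic metric and $\chi(M)<0$, i.e. $M$ has genus $\geq 2$ — the second alternative. When $\mathfrak{I}$ is parabolic I argue exactly as in the proof of Theorem~\ref{projective}: $G/I$ is $\RR^2\setminus\{0\}$ with the linear action, carrying the invariant area form together with the invariant radial field $x_1\partial_{x_1}+x_2\partial_{x_2}$ of constant divergence $2$; both descend to the compact $M$, and since the flow of any field on a compact manifold preserves total volume its divergence must integrate to $0$, a contradiction. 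Thus the parabolic case does not occur.

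It remains to exclude hyperbolic isotropy, and this is where I expect the real difficulty to lie. For $\mathfrak{I}=\RR\cdot\mathrm{diag}(1,-1)$ the adjoint $I$-action on $\mathfrak{g}/\mathfrak{I}$ has two distinct real eigenlines, so $G/I$ bears two transverse $G$-invariant line fields and is imprimitive; by Proposition~\ref{imprimitive} a compact $M$ modelled on it must be a torus. The obstruction is that, in contrast with the parabolic case, these eigenlines are scaled by $e^{\pm 2t}$, so there is no nonzero $\mathrm{Ad}(I)$-invariant vector and the divergence argument is unavailable. The route I would take is to identify $G/I=PSL(2,\RR)/A$ with the de Sitter surface — the cylinder of oriented geodesics of $\mathbb{H}^2$, equipped with its invariant Lorentzian metric of constant curvature — and to contradict compactness by analysing the developing map $\RR^2\to G/I$ together with its abelian holonomy $\rho:\ZZ^2\to PSL(2,\RR)$: an abelian subgroup cannot act properly discontinuously and cocompactly on the de Sitter cylinder, so no compact quotient, and hence no compact $M$, can arise. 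Making this final contradiction elementary — in effect reproving the non-existence of compact de Sitter surfaces from the two invariant foliations on the torus, without invoking completeness theorems for constant-curvature Lorentz metrics — is the delicate point of the argument.
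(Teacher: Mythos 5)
Your treatment of the $\mathfrak{su}(2)$, elliptic and parabolic cases coincides with the paper's: the same reduction via Theorem~\ref{model}, the same use of Theorem~\ref{isotropie compacte} and the fixed-point argument for $S^3$, and the same divergence/volume argument borrowed from the proof of Theorem~\ref{projective} to kill unipotent isotropy. The genuine gap is the one you flag yourself: the hyperbolic (semi-simple) isotropy case is left without a proof. The route you sketch -- an abelian holonomy group cannot act properly discontinuously and cocompactly on the de Sitter cylinder, ``so no compact $M$ can arise'' -- does not close, because a compact surface locally modelled on $G/I$ is a quotient $\Gamma\backslash G/I$ only when the structure is \emph{complete}; for an incomplete structure the developing map need not be a covering onto its image, and nothing in your argument rules incompleteness out (incomplete affine structures on tori show this is a substantive issue, not a formality). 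As written, the proposition is therefore not proved.

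The paper closes this case with a purely local obstruction that needs no completeness: on $SL(2,\RR)/I$ it uses the unit vector $x$ and the invariant Lorentz metric $g$ coming from the Killing form to produce a one-form $\omega=g(X,\cdot)$ whose exterior derivative is a volume form (via $d\omega(Y,Z)=-g(X,[Y,Z])$); this datum descends to any $M$ locally modelled on $SL(2,\RR)/I$, and $\int_M d\omega\neq 0$ for an exact form contradicts Stokes on a compact surface. Note also that from exactly the point where you stopped there is an equally local finish, the one the paper itself uses later in the proof of Proposition~\ref{a Lorentz metric}: your two transverse invariant line fields force $M$ to be a torus, the invariant Lorentz metric of $SL(2,\RR)/A$ has constant \emph{nonzero} sectional curvature, and the Lorentzian Gauss--Bonnet theorem on a torus forces the constant curvature of a locally homogeneous Lorentz metric to vanish -- a contradiction requiring neither completeness nor any analysis of the developing map or holonomy.
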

 
 \begin{proof} By theorem~\ref{model}, $M$ is locally modelled on $G/I$, with $G$ a $3$-dimensional connected simply connected semi-simple Lie group and $I$ a closed one parameter subgroup in $G$.
 
 Up to isogeny, there are only two such $G$: $S^3$ and $SL(2,\RR)$~\cite{Kir}. If $G=S^3$ then  $I$ is compact and coincides with the stabilizer of a point under the standard $S^3$-action on $S^2$. Consequently, $G/I$ identifies with $S^2$ and the 
 $G$-action on $G/I$ preserves the canonical metric of the two-sphere.
 
 The developing map from $\tilde M$ to $G/I$ has to be a diffeomorphism (see theorem~\ref{isotropie compacte}). Consequently,   $M$ is a quotient of the sphere $S^2$ by a discrete subgroup of $G$
 acting by deck transformations. Since a nontrivial isometry of $S^2$ which is not $-Id$ always admits fixed points, this discrete subgroup has to be of order two. Up tp a double cover, $(M, \phi)$ is isomorphic to $S^2$
 endowed with a rotation invariant geometric structure.
 
 Consider now the case where $G=SL(2,\RR)$. Then $M$ is locally modelled on $G/I$, where $I$ is a closed one parameter subgroup in $SL(2, \RR)$. We showed in the proof of 
 theorem~\ref{projective} that $I$ is not conjugated to a unipotent subgroup. 
 
 Assume   now  that   $I$ is  conjugated to a one parameter semi-simple subgroup in $SL(2, \RR)$.  We prove  that this assumption yields  a contradiction. In order to describe the geometry of $SL(2,\RR)/I$,  consider the adjoint representation
 of $SL(2, \RR)$ into its Lie algebra $sl(2, \RR)$. This $SL(2, \RR)$-action preserves the Killing quadratic form $q$, which is a non degenerate Lorentz quadratic form.
 Choose $x \in sl(2, \RR)$ a vector of unitary $q$-norm and consider its orbit under the adjoint representation. This orbit identifies with our  homogeneous space
 $SL(2, \RR)/I$, on which the restriction of the Killing form induces a two dimensional  $SL(2, \RR)$-invariant  complete Lorentz  metric $g$  of  constant nonzero sectional curvature~\cite{Wolf}.
 
 We prove now that there is no {\it compact}  surface locally modelled on the previous  homogeneous space $SL(2, \RR)/I$.
 
 Observe that $x$ induces on $SL(2,\RR)/I$ a $SL(2, \RR)$-invariant vector field $X$ and $g(X, \cdot)$ induces on $SL(2, \RR)/I$ an invariant one form $\omega$. Remark  that 
 $d \omega$ is a volume form. Indeed, $d\omega(Y,Z)=-g(X, \lbrack Y, Z \rbrack)$, for all  $SL(2, \RR)$-invariant vector fields $Y,Z$ tangents to $SL(2, \RR)/I$.
 
 Assume, by contradiction,  that $M$ is locally modelled on $SL(2, \RR)/I$. Then $M$ inherits the   one form $\omega$ whose differential is a
 volume form. This is in contradiction with Stokes' theorem.  Indeed $\int_{M}d \omega = \int vol  \neq 0$, but $d \omega$
 is exact.
 
 It remains that $I$ is conjugated to a one parameter subgroup of rotations in $SL(2,\RR)$ and then the  $SL(2, \RR)$-action on $SL(2,\RR)/ I$ identifies with the action by homographies on the Poincar\'e's
 upper-half plane. This action preserves the hyperbolic metric. Therefore $M$ inherits a hyperbolic metric and, consequently,  its genus is $\geq 2$.
 \end{proof}

 \section{Dynamics of local Killing algebra} \label{section4}
 
 A  manifold $M$ bearing a geometric structure $\phi$ admits  a natural partition given by the orbits of the action of the Killing algebra of $\phi$. Precisely, two points
 $m_{1}, m_{2} \in M$ are in the same subset of the partition if $m_{1}$ can be reached from $m_{2}$ by flowing along a finite sequence of local Killing fields. 
A connected open set in $M$ where $\phi$ is locally homogeneous lies in the same subset of this partition.

The Gromov's celebrated stratification theorem~\cite{DG, Gro} which was used and studied by many authors~\cite{DG,Gro, Benoist2, CQ, Feres} roughly states that, if $\phi$ is rigid,
the subsets  of this partition are locally closed in $M$. We adapt here Gromov's proof and get a more precise result in the particular case, where $\phi$ is of Riemannian type.

 \begin{theorem} Let $M$ be a connected manifold endowed with a geometric structure of Riemannian kind $\phi$. Then the orbits of the Killing algebra of $\phi$ are closed.
  \end{theorem}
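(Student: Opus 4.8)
The plan is to upgrade Gromov's local closedness of the orbits to genuine closedness by exploiting the preserved Riemannian metric $h$ furnished by the Riemannian type hypothesis. Write $\mathfrak{g}$ for the (finite dimensional) Killing algebra and let $O$ be the orbit of a point $m$, that is, the set of points reachable from $m$ by flowing along local Killing fields. By the Gromov stratification theorem just recalled, $O$ is locally closed, hence open in its closure $\overline{O}$; I would argue by contradiction, assuming there is a point $p \in \overline{O}\setminus O$, and produce from $p$ a local automorphism of $\phi$ forcing $p\in O$.

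First I would pass to the bundle $\pi\colon O(M)\to M$ of $h$-orthonormal frames. Its structure group $O(n)$ is compact, and $O(M)$ carries its canonical absolute parallelism (the solder form together with the Levi-Civita connection form). Because every Killing field of $\phi$ is in particular an infinitesimal isometry of $h$, it lifts canonically to a vector field on $O(M)$ preserving this parallelism; and by rigidity a Killing field whose $1$-jet vanishes at a point is identically zero, so the evaluation $\mathfrak{g}\to T_uO(M)$ is injective at every frame $u$. Thus the lifted orbits are leaves of a regular integrable distribution of constant rank $\dim\mathfrak{g}$, and the lifted action is free.

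Next I would set up the recurrence argument. Choose $p_n\in O$ with $p_n\to p$; each $p_n$ is obtained from $m$ by a composition $g_n$ of time-$t$ maps of local Killing flows, so $g_n$ is a germ of isometry of $h$ preserving $\phi$ with $g_n(m)=p_n$. The derivatives $(dg_n)_m$ are $h$-orthogonal, so after lifting to frames and using a local trivialization of $O(M)$ near $p$ together with the compactness of $O(n)$, I extract a subsequence along which $g_n$ converges in the $C^\infty$ topology (isometries being determined by, and depending continuously on, their $1$-jet) to a local isometry $g_\infty$ with $g_\infty(m)=p$. Since each $g_n$ preserves $\phi$ and $\phi$ is smooth, the limit $g_\infty$ preserves $\phi$ as well, so $g_\infty$ is a local automorphism of $\phi$ carrying $m$ to $p$.

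It remains to see that such a $g_\infty$ must lie in the orbit-generating pseudo-group, and this is the step I expect to be the main obstacle. The map $g_\infty$ belongs to the closure of the pseudo-group generated by the Killing flows; the decisive point is that this closure still preserves $\phi$ (being a $C^\infty$-limit of $\phi$-preserving maps), hence consists of automorphisms of $\phi$, so every one-parameter subgroup it contains is generated by a genuine Killing field and therefore lies in $\mathfrak{g}$. As the closure manifestly contains the original pseudo-group, its infinitesimal algebra equals $\mathfrak{g}$, and so the closure has exactly the same orbits as $\mathfrak{g}$ — both being the maximal integral leaf through a given point of the single distribution spanned by $\mathfrak{g}$. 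This forces $p=g_\infty(m)$ to lie in the leaf through $m$, that is $p\in O$, contradicting $p\in\overline{O}\setminus O$. The real care is needed precisely in controlling the closure of the (possibly incomplete) local pseudo-group so that the limiting symmetry does not escape the finite dimensional algebra $\mathfrak{g}$: this is where compactness of $O(n)$ and the freeness and rigidity of the parallelism on $O(M)$ are essential. Once the contradiction is reached, $\overline{O}\setminus O=\emptyset$ and every orbit of the Killing algebra is closed.
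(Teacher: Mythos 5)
Your strategy diverges from the paper's: the paper does not construct a limiting isometry at all. It invokes Gromov's characterization of the orbits as the connected components of the projections to $M$ of the preimages, under the jet map $\phi^k\colon B\to Z^k$ defined on the $g$-orthonormal frame bundle $B$, of the $O(n,\RR)$-orbits in the jet variety $Z^k$; since those orbits are compact, their preimages are closed saturated sets, and closedness survives projection along the compact fibers of $B\to M$. Your recurrence argument is a genuinely different route (in the spirit of Myers--Steenrod properness arguments), but it has a gap at exactly the step you flag as the main obstacle, and the argument you offer there does not close it.

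Concretely: from the limiting local automorphism $g_\infty$ with $g_\infty(m)=p$ you cannot conclude $p\in O$ by saying that the closure of the pseudo-group generated by the Killing flows has infinitesimal algebra $\mathfrak{g}$ and ``therefore the same orbits.'' Equality of Lie algebras controls only the identity component; the limit $g_\infty$ need not lie on any one-parameter subgroup and may well be a ``discrete'' automorphism carrying $m$ outside its Killing orbit (compare a reflection symmetry of a metric with trivial Killing algebra: it is a local automorphism identifying two distinct, closed, zero-dimensional orbits). Moreover, your supporting claim that the orbits are the leaves of a single constant-rank integrable distribution spanned by $\mathfrak{g}$ fails precisely at the boundary point $p$: in the smooth (non-analytic) category, local Killing fields defined along $O$ need not extend past $p$, and the local Killing algebra at $p$ can have different dimension, so there is no global foliation whose leaf through $m$ you could appeal to. What is actually needed to finish is the statement that two points whose $s$-jets of $\phi$ in orthonormal exponential coordinates agree up to the $O(n,\RR)$-action lie in the same orbit (up to taking connected components) --- that is, Gromov's integrability theorem. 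Once you invoke that, the compactness of the $O(n,\RR)$-orbits in $Z^s$ already yields closedness directly, as in the paper, and the entire limiting-isometry construction becomes unnecessary; without it, your final step is unproven.
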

 
 \begin{cor}  \label{metric} If $\phi$ is locally homogeneous on an open dense set, then
 $\phi$ is locally homogeneous on $M$. 
 \end{cor}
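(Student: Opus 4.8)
The plan is to exploit the partition of $M$ into orbits of the Killing algebra of $\phi$ and to show that, under the hypothesis, one of these orbits is simultaneously open and closed. Since $\phi$ is of Riemannian kind, the Theorem above guarantees that every such orbit is closed, so the only missing ingredient is openness, which I expect to extract from local homogeneity on the dense open set together with the fact that flows of Killing fields are local automorphisms.

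First I would fix a point $u_0$ in the open dense set $U$ on which $\phi$ is locally homogeneous, and denote by $\mathcal{O}$ the orbit of the Killing algebra through $u_0$, that is, the set of points reachable from $u_0$ by flowing along finite sequences of local Killing fields. By definition of local homogeneity, the local Killing fields span $T_{u_0}M$, so $\mathcal{O}$ contains an open neighborhood $V_0$ of $u_0$.

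Next I would prove that $\mathcal{O}$ is open. The key observation is that the time-$t$ flow of a local Killing field is a local automorphism of $\phi$, and that a finite composition of such flows therefore carries local Killing fields to local Killing fields and preserves the orbit partition. Given any $m \in \mathcal{O}$, write $m = \psi(u_0)$ with $\psi$ such a composition; then $\psi$ is a local diffeomorphism, so $\psi(V_0)$ is an open neighborhood of $m$, and since every point of $V_0$ lies in $\mathcal{O}$ while $\psi$ sends points of $\mathcal{O}$ into $\mathcal{O}$, we get $\psi(V_0) \subseteq \mathcal{O}$. Hence $\mathcal{O}$ is open at each of its points. By the Theorem, $\mathcal{O}$ is also closed; as $M$ is connected and $\mathcal{O}$ is nonempty, it follows that $\mathcal{O} = M$.

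Finally I would deduce local homogeneity everywhere. For an arbitrary $m \in M = \mathcal{O}$, the local automorphism $\psi$ with $\psi(u_0) = m$ conjugates the local Killing fields defined near $u_0$ to local Killing fields defined near $m$; being a diffeomorphism, it carries a spanning family of tangent vectors at $u_0$ to a spanning family at $m$. Since the Killing fields span $T_{u_0}M$, they span $T_m M$, so $\phi$ is locally homogeneous at $m$, hence on all of $M$. I expect the main obstacle to be the bookkeeping forced by the fact that Killing fields and their flows are only locally defined, so that one must argue throughout with germs and compositions of local flows; the crucial input that upgrades a merely open orbit to all of $M$ is precisely the closedness supplied by the Riemannian-type hypothesis.
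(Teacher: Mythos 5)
Your proof is correct and follows the route the paper intends: you take the closedness of the Killing-algebra orbits from the preceding theorem as given, and combine it with the openness of the orbit through a point of the locally homogeneous set (obtained by pushing Killing fields forward under compositions of their local flows) and the connectedness of $M$. The paper's displayed proof is devoted to establishing the closedness statement itself and leaves this deduction implicit, so your argument is exactly the missing bookkeeping, carried out correctly.
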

 
 \begin{proof} Let $g$ be a Riemannian metric preserved by all Killing fields of $\phi$. Consider also the $g$-orthonormal frame bundle $\pi : B \to M$. Then $B$ is a principal
 sub-bundle of the frame bundle $R^1(M)$ with structure group $O(n, \RR)$.
 
 Gromov's proof shows that (for any rigid geometric structure) there exists an integer $s \in \NN$ such that two points of $M$ where the $s$-jet of $\phi$ is the same are in the same
 orbit of the Killing algebra~\cite{DG,Gro, Benoist2, CQ, Feres}.
 
 We consider exponential local coordinates with respect to $g$. For each element of $(m, b) \in B$ we get local exponential coordinates around the point $m \in M$  in which we  take the $k$-jet $ \phi^k$ of $\phi$. This gives a map
 $$\phi^k : B \to Z^k$$
 
 with values in the variety $Z^k$  of $k$-jets of $\phi$.
 
 By Gromov's proof,   orbits of the Killing algebra  are the connected components of the projections on $M$ (through $\pi$)  of the pull-back through $\phi^k$ of $O(n,\RR)$-orbits of $Z^k$ (see the arguments in~\cite{DG}, section 3.5). The $O(n,\RR)$-orbits
 of $Z^k$ being compact, their pull-back through $\phi^k$  in $B$ are saturated closed sets. By   compactness of the fibers, the projection to $M$ of a saturated closed
 subset in $B$ is a closed set. Since connected components in $M$ of  closed sets are also closed, we get that the orbits of the Killing algebra are closed.
 \end{proof}

 The corollary  was known for Riemannian metrics. Indeed, in~\cite{Tri} the authors proved that  Riemannian metrics  whose  all scalar invariants are constant are locally homogeneous (this is known to fail in the pseudo-Riemannian setting~\cite{BV}).
 
 In the real analytic realm this implies the following more precise:
 
 \begin{cor} \label{extension}  If $M$ and $\phi$ are real analytic and $\phi$ is locally homogeneous on a nontrivial  open set, then $\phi$ is locally homogeneous on $M$.
 \end{cor}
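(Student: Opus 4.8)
The plan is to let real analyticity do one job only---turn the \emph{nontrivial} open set into an open \emph{dense} set on which $\phi$ is locally homogeneous---and then to read off the conclusion from Corollary~\ref{metric}.

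First I would pass to the universal cover $\pi\colon\tilde M\to M$, equipped with the pulled-back structure $\tilde\phi=\pi^{*}\phi$, which is again a real analytic rigid geometric structure (of Riemannian type) and which is locally homogeneous on the open set $\tilde U=\pi^{-1}(U)$. The decisive real-analytic input is the extension theorem for Killing fields of rigid analytic structures on a simply connected manifold (see~\cite{DG,Gro,Feres}): every local Killing field of $\tilde\phi$ extends to a global Killing field of $\tilde M$. Let $\mathfrak g$ be the Lie algebra of global Killing fields of $\tilde\phi$; it is finite dimensional by rigidity. For $p\in\tilde U$, local homogeneity furnishes local Killing fields spanning $T_{p}\tilde M$, and after extension these lie in $\mathfrak g$, so the evaluation map $\mathrm{ev}_{p}\colon\mathfrak g\to T_{p}\tilde M$, $X\mapsto X(p)$, is onto for every $p\in\tilde U$.

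Next I would analyse the spanning locus $\Omega=\{p\in\tilde M : \mathrm{ev}_{p}\ \text{is surjective}\}$. Fixing a basis $X_{1},\dots,X_{d}$ of $\mathfrak g$ and working in local analytic coordinates, $\Omega$ is the complement of the common zero set of the $(\dim M)\times(\dim M)$ minors of the matrix of the $X_{i}(p)$; these are real analytic functions that do not all vanish identically, since $\Omega\supseteq\tilde U\neq\emptyset$. Hence $\tilde M\setminus\Omega$ is a proper analytic subset, $\Omega$ is open and dense, and $\tilde\phi$ is locally homogeneous on $\Omega$. Because $\pi$ is an open surjective local diffeomorphism, $\pi(\Omega)$ is open and dense in $M$, and local Killing fields of $\tilde\phi$ push down locally to local Killing fields of $\phi$; therefore $\phi$ is locally homogeneous on the open dense set $\pi(\Omega)\subseteq M$.

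Finally, since $\phi$ is of Riemannian type, Corollary~\ref{metric} applies and upgrades local homogeneity on an open dense set to local homogeneity on all of $M$, which is the assertion. The only genuinely delicate point is the density step: analyticity is indispensable precisely because it forces the spanning condition to fail only on a proper analytic subset rather than on an arbitrary closed set, so that one may pass from a single open set to a dense one---this is exactly what fails in the merely smooth category. Everything after the density upgrade is formal, being a direct appeal to Corollary~\ref{metric} (equivalently, to the closedness of the orbits of the Killing algebra for structures of Riemannian type).
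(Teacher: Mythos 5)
Your argument is correct, and it differs from the paper's precisely at the one step where analyticity is used. The paper obtains density of the locally homogeneous locus by quoting Gromov's analytic stratification theorem (\cite{Gro}, section 3.2): away from a nowhere dense analytic subset $S$ of $M$, the orbits of the Killing algebra are connected components of fibers of an analytic map of \emph{constant rank}, so the existence of one open orbit forces that rank to be zero on all of $M\setminus S$, making every orbit there open; the closedness of orbits (Corollary~\ref{metric}) then finishes, exactly as in your last step. You instead pass to the universal cover, invoke the Nomizu--Amores--Gromov extension theorem \cite{Nomizu,Amores,Gro} to replace local Killing fields by a finite--dimensional Lie algebra $\mathfrak g$ of globally defined ones, and note that the locus where $\mathrm{ev}_{p}$ fails to be onto is the common zero set of finitely many analytic minors, hence a proper analytic subset of the connected manifold $\tilde M$. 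The two routes rest on different citations (stratification versus extension of Killing fields), and yours is more hands-on once the extension theorem is granted; it is in fact the same mechanism the paper deploys later for Theorem~\ref{affine connection}, where the non-transitivity locus of the extended Killing algebra is likewise a nowhere dense analytic set. Two points are worth keeping explicit: the corollary inherits from its context the standing hypothesis that $\phi$ is of Riemannian type, which you need both for Corollary~\ref{metric} and, via the paper's remark that Riemannian-type structures are automatically rigid, for the applicability of the extension theorem; and the identity-theorem step (``the minors cannot all vanish on a nonempty open set unless they vanish identically'') should be run through connectedness of $\tilde M$, e.g.\ by showing the interior of the degeneracy locus is open and closed --- routine, but it is exactly where real analyticity, as opposed to mere smoothness, enters.
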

 
 \begin{proof} In the real analytic setting, Gromov's proof shows that away from a nowhere dense analytic subset $S$ in  $M$, the orbits of the Killing algebra are  connected components of  fibers of an analytic map of constant rank~\cite{Gro} (section 3.2). With our hypothesis,  the orbits of the Killing algebra are exactly connected components of $M \setminus S$. Consequently, they are open sets. One apply  now corollary~\ref{metric} and get  that   the orbits are also closed. Since $M$ is connected,  the Killing algebra admits exactly one
 orbit.
 \end{proof}
 
 In the previous results the compactness of the orthogonal group was essential.

  We  prove  now the following result for affine unimodular connections which are not (necessarily) of Riemannian kind:
 
 \begin{theorem} \label{affine connection} Let $M$ be a compact  connected real analytic  surface endowed with a real  analytic unimodular   affine connection $\nabla$. If $\nabla$ is locally homogeneous on a  nontrivial  open set in $M$, then $\nabla$ is locally homogeneous on $M$.
 \end{theorem}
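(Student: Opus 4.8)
The plan is to reduce the statement to showing that a certain analytic degeneracy locus is empty, and then to kill that locus by combining the classification of Section~\ref{section2}, the invariant volume coming from unimodularity, and the compactness of $M$. First I would pass to the universal cover $\tilde M$ with the pulled-back analytic unimodular connection $\tilde\nabla$, which is locally homogeneous on the preimage $\tilde U$ of the given open set. By the analytic continuation of local Killing fields (the mechanism already used in the proof of Corollary~\ref{extension}, valid for any real analytic rigid structure), every local Killing field extends along paths, hence, $\tilde M$ being simply connected, to a global Killing field. Since restriction inverts extension, the global Killing algebra of $\tilde\nabla$ coincides with the local Killing algebra $\mathfrak g$ on $\tilde U$, and the germ of Killing fields at every point of $\tilde M$ is exactly $\mathfrak g$. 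Thus $\tilde\nabla$ is locally homogeneous at $p$ if and only if the evaluation $\mathfrak g\to T_p\tilde M$ is onto, i.e. $p$ lies in the locus $\Omega$ of two dimensional $\mathfrak g$-orbits. Fixing a basis $X_1,\dots,X_d$ of $\mathfrak g$, the complement $\Sigma=\tilde M\setminus\Omega$ is the common analytic zero set of the bivectors $X_i\wedge X_j$; as $\Omega\supseteq\tilde U\neq\varnothing$, it is a proper, $\mathfrak g$-invariant, hence nowhere dense analytic subset. The theorem is therefore equivalent to $\Sigma=\varnothing$.

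On the open dense set $\Omega$ the connection is locally homogeneous, so by Theorems~\ref{surfaces} and~\ref{imprimitive connections} it is torsion free and flat, of Riemannian type, or locally modelled on an imprimitive homogeneous space. In the flat case curvature and torsion are analytic tensors vanishing on the dense set $\Omega$, hence identically; then $\tilde\nabla$ is flat and torsion free everywhere, so locally homogeneous everywhere, and $\Sigma=\varnothing$. For the Riemannian type case I would argue on the isotropy: for an affine connection the linear isotropy representation $\mathfrak g_p\to\mathfrak{gl}(T_p\tilde M)$ is faithful (an affine field is determined by its value and linear part at a point), and Riemannian type forces its image into $\mathfrak{so}(2)$. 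At a fixed point this gives $\dim\mathfrak g\leq 1$, impossible; along a one dimensional orbit the isotropy would have to preserve the orbit direction, whereas a nonzero element of $\mathfrak{so}(2)$ preserves no line. Hence $\Sigma$ contains neither kind of point, so $\Sigma=\varnothing$; this is exactly where the compactness of $O(2)$, essential in Corollaries~\ref{metric} and~\ref{extension}, enters.

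The heart of the matter is the imprimitive case, precisely where non-Riemannian unimodular connections live and where the structure group is the noncompact $SL(2,\RR)$. Here the faithful linear isotropy at $p\in\Sigma$ lands in the stabilizer in $\mathfrak{sl}(2,\RR)$ of the orbit direction, a two dimensional Borel subalgebra, so neither a fixed point nor a one dimensional orbit is excluded infinitesimally: compactness of the structure group is no longer available. To replace it I would exploit the two canonical invariant objects: the $\mathfrak g$-invariant one dimensional foliation furnished by Theorem~\ref{imprimitive connections} and the parallel volume form $\omega$ supplied by unimodularity. Both are deck-invariant, hence descend to $M$: $\omega$ is a genuine volume form on the compact surface $M$, and the foliation descends to a (line) foliation $\mathcal F$ nonsingular over the image of $\Omega$ and singular over the image of $\Sigma$. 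Every $X\in\mathfrak g$ maps $\omega$ to a parallel volume form, so $L_X\omega=\delta(X)\,\omega$ with $\delta(X)\in\RR$ constant, and I would use the volume argument of the proof of Theorem~\ref{projective} — a flow must preserve the finite total volume of the compact $M$ — together with the analytic propagation from $\Omega$ to $M$ of the affine scalar invariants built from curvature, torsion and $\omega$, to force the relevant part of $\delta$ to vanish and to show that $\mathcal F$ cannot acquire a singularity. This yields $\Sigma=\varnothing$; once local homogeneity holds on all of $M$, Poincaré--Hopf applied to the now nonsingular $\mathcal F$ (as in Proposition~\ref{imprimitive}) recovers Opozda's Theorem~\ref{Opozda} that $M$ is a torus, and completeness then gives Theorem~\ref{sur tore}.

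I expect the genuine obstacle to be this last, imprimitive, step: quantifying how $\tilde\nabla$ behaves near $\Sigma$. Unlike the Riemannian type case, the degeneration is not forbidden by the isotropy representation, so one must control the analytic continuation of the invariant volume and foliation across the singular locus and prove that a singularity of $\mathcal F$ is incompatible with the constancy of the affine invariants and the vanishing of the divergence character forced by compactness. Making this incompatibility precise — presumably through the explicit local normal forms of Arias-Marco and Kowalski for the admissible unimodular imprimitive Killing algebras, checking case by case that a scaling or focus type degeneration of $\mathcal F$ cannot occur with $\delta$ trivial — is the technical core of the argument.
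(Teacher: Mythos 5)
Your reduction is sound as far as it goes: the Nomizu--Amores--Gromov extension of local Killing fields on the simply connected analytic $\tilde M$, the identification of the degeneracy locus $\Sigma$ as a nowhere dense $\mathfrak g$-invariant analytic set, and the disposal of the torsion-free flat case by analytic continuation of curvature and torsion all match the paper's setup. But the theorem is not proved, because the case you yourself identify as ``the technical core'' --- the imprimitive, non-Riemannian situation --- is only described, not argued. Saying that one should ``use the volume argument \dots\ to force the relevant part of $\delta$ to vanish and to show that $\mathcal F$ cannot acquire a singularity'' is a program, not a proof: nothing in your text explains why a singularity of the invariant foliation along $\Sigma$ is incompatible with the invariant volume. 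The paper's proof of exactly this point occupies all of Section~\ref{section4} after Lemma~\ref{dim isotropy}: it first pins down the Killing algebra as $\mathfrak{aff}(\RR)$ with one-dimensional isotropy along $S$ (using the key Lemma~\ref{unimodularlemma}, that the isotropy of a unimodular analytic connection on a surface never has dimension $2$, which you never establish and which is what kills both the $\dim\mathfrak g\ge 3$ and the $\mathfrak g=\RR^2$ alternatives), shows $S$ is a union of circles, and then splits into unipotent versus semi-simple isotropy. Each branch requires a genuinely new idea: in the unipotent case, that the backward flow of $X$ expands the isotropy generator $Y$ (which vanishes on $S$) and hence cannot accumulate on $S$, or alternatively that completeness would exhibit a lattice in the non-unimodular $Aff(\RR)$; in the semi-simple case, the construction of a second transverse geodesic foliation, the resulting flat $\mathfrak g$-invariant Lorentz metric on the torus, and the vector field $T$ vanishing exactly on $S$ whose divergence is forced to be $0$ globally but computes to $1$ locally. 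None of these steps appears in your proposal, and they are not routine consequences of the Arias-Marco--Kowalski normal forms you propose to fall back on.

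A secondary gap: in your Riemannian-type case you place the linear isotropy at a point $p\in\Sigma$ inside $\mathfrak{so}(2)$, but the invariant metric is only known to exist on $\Omega$, not at $p$, so this containment is unjustified as stated. The paper sidesteps this by a uniform dimension count: if $\dim\mathfrak g\ge 3$ the isotropy at $s\in S$ has dimension $\ge 2$, hence $=3$ by Lemma~\ref{unimodularlemma}, hence contains a copy of $\mathfrak{sl}(2,\RR)$, which forces the connection to be the standard flat one near $s$ and, by analyticity, everywhere --- contradicting $\Sigma\neq\varnothing$. You would need either that argument or a justification that the invariant metric extends analytically across $\Sigma$.
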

 
 Recall that $\nabla$ is said {\it unimodular}  if  there exists a volume form on $M$ which is invariant by the parallel transport~\cite{AVL}. This  volume form is automaticaly preserved by any  local Killing field of $\nabla$. We prove first the following useful:
 
 \begin{lemma} \label{unimodularlemma} Let $\nabla$ be a unimodular analytic  affine connection on an analytic  surface $M$. Then the dimension of the isotropy algebra at a point of $M$ is $\neq 2$.
 \end{lemma}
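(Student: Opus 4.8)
The plan is to analyze the linear isotropy representation at a point $p$ and to show that a two-dimensional isotropy algebra is forced to be three-dimensional, which is absurd. Write $V=T_pM$. Since an affine connection is rigid at order one, a local Killing field $X$ vanishing at $p$ is determined by its linear part $A=\nabla X(p)\in\mathfrak{gl}(V)$, so the assignment $X\mapsto\nabla X(p)$ embeds the isotropy algebra $\mathfrak I_p$ into $\mathfrak{gl}(V)$. Because $\nabla$ is unimodular, every Killing field preserves the parallel volume, so $\operatorname{tr}A=\operatorname{div}X(p)=0$; hence $\mathfrak I_p\hookrightarrow\mathfrak{sl}(V)\cong\mathfrak{sl}(2,\RR)$. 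I would then use that $\mathfrak{sl}(2,\RR)$ contains no two-dimensional abelian subalgebra (its maximal abelian subalgebras are one-dimensional), so a two-dimensional $\mathfrak I_p$ is non-abelian, isomorphic to $\mathfrak{aff}(\RR)$, and therefore conjugate to the standard Borel $\mathfrak b=\langle h,e\rangle$, where $(h,e,f)$ is the standard $\mathfrak{sl}(2,\RR)$-triple with $[h,e]=2e$, $[h,f]=-2f$, $[e,f]=h$.

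The heart of the argument is a representation-theoretic upgrade. Every local Killing field preserves $\nabla$, hence preserves the curvature $R$, the torsion $T$, and all their covariant derivatives; evaluated at the fixed point $p$, the full tower $\{(\nabla^kR)(p),(\nabla^kT)(p)\}_{k\ge 0}$ therefore consists of $\mathfrak I_p$-invariant tensors lying in tensor powers of $V$ and $V^{*}$, on which $\mathfrak{sl}(2,\RR)\cong\mathfrak{sl}(V)$ acts by the induced (standard) representation. Each such tensor $\Theta$ is annihilated by $h$ and by the raising operator $e$, since both lie in $\mathfrak b$. But in any finite-dimensional representation of $\mathfrak{sl}(2,\RR)$ a vector killed by $h$ and $e$ is a highest weight vector of weight zero, and such a vector is automatically killed by $f$ as well: indeed $f\Theta$ would be a highest weight vector, because $e(f\Theta)=fe\Theta+h\Theta=0$, of weight $-2$, which is impossible in a finite-dimensional module unless $f\Theta=0$. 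Consequently the whole curvature–torsion tower at $p$ is invariant not merely under $\mathfrak b$ but under all of $\mathfrak{sl}(2,\RR)$.

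To conclude I would invoke the analytic extension principle for Killing fields of rigid analytic structures: in the real analytic category a linear map $A\in\mathfrak{sl}(V)$ that stabilizes the entire tower $\{(\nabla^kR)(p),(\nabla^kT)(p)\}$ is the linear part of a genuine local Killing field vanishing at $p$. Applying this to $A=f$, which stabilizes the tower by the previous paragraph but does not lie in $\mathfrak b$, yields a Killing field outside $\mathfrak I_p$, so $\dim\mathfrak I_p\ge 3$, contradicting $\dim\mathfrak I_p=2$. The step I expect to be delicate is precisely this last one — promoting a formal infinitesimal automorphism that preserves the curvature data to an honest, convergent local Killing field. This is where analyticity is indispensable (the implication can fail in the smooth category), and it is the pointwise analogue of the analytic results of Gromov used elsewhere in the paper. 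The $\mathfrak{sl}(2,\RR)$-upgrade is the conceptual key, while the convergence of the extension is the technical crux; note finally that unimodularity is used only to place $\mathfrak I_p$ inside $\mathfrak{sl}(2,\RR)$, and this is essential, since $\mathfrak{gl}(2,\RR)$ does admit two-dimensional abelian subalgebras and the upgrade genuinely fails there.
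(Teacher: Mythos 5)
Your argument is correct and is essentially the paper's own proof in a slightly different packaging: the paper embeds the two-dimensional isotropy into $\mathfrak{sl}(2,\RR)$ acting on the $k$-jets of $\nabla$ in exponential coordinates and invokes the fact that stabilizers of a linear algebraic $SL(2,\RR)$-action never have dimension two, which is precisely your observation that a Borel-annihilated vector in a finite-dimensional $\mathfrak{sl}(2,\RR)$-module is annihilated by all of $\mathfrak{sl}(2,\RR)$ (applied to the curvature--torsion tower, which carries the same information as the jets of $\nabla$ in normal coordinates). Your concluding step --- using analyticity to integrate the extra infinitesimal symmetry $f$ into an honest local Killing field vanishing at $p$, forcing $\dim\mathfrak{I}_p\geq 3$ --- is exactly the step the paper takes, so the two proofs coincide in substance.
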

 
 \begin{proof}  Assume by contradiction that the isotropy algebra $\mathcal{I}$ at a point $m \in M$ has dimension two. Consider a system of  local exponential coordinates at $m$ with respect to $\nabla$ and, for all $k \in \NN$  take the $k$-jet of $\nabla$ in these  coordinates. Any volume preserving linear isomorphism of $T_{m}M$ gives another system of local exponential coordinates at $m$, with respect to which we consider the $k$-jet of the connection. This gives an algebraic  $SL(2,\RR)$-action on the vector space $Z^k$ of $k$-jets of affine connections on $\RR^2$ admitting a trivial underlying $0$-jet~\cite{DG,Gro}.
 
 Elements of $\mathcal{I}$ linearize in exponential coordinates at $m$. Since they preserve $\nabla$, they preserve in particular the $k$-jet of $\nabla$ at $m$, for all $k \in \NN$.
 This gives an embedding of $\mathcal{I}$ in the Lie algebra of $SL(2,\RR)$ such that  the corresponding (two dimensional)   connected subgroup of $SL(2,\RR)$ preserves the $k$-jet of $\nabla$ at $m$ for all  $k \in \NN$. 
 
 Now we use the  fact that {\it the stabilizers of a linear  algebraic $SL(2,\RR)$-action are of dimension  $\neq 2$}. Indeed, it suffices to check this statement for irreducible
 linear representations of $SL(2, \RR)$ for which it is well-known that the stabilizer in $SL(2,\RR)$  of a nonzero element is one dimensional~\cite{Kir}.
 
 It follows that the stabilizer of the $k$-jet of $\nabla$ at $m$ is of dimension three and contains the connected component of identity in $SL(2, \RR)$. Consequently,  in exponential coordinates  at $m$, each element of the connected  component of the identity in $SL(2, \RR)$  gives rise  to a local linear vector field which preserves $\nabla$ (for it preserves all $k$-jets
 of $\nabla$). The isotropy algebra $\mathcal{I}$ contains a copy of the Lie algebra of $SL(2, \RR)$: a contradiction, since $\mathcal{I}$ is of dimension two.
 \end{proof} 
 
 Our proof   of theorem~\ref{affine connection} will need analyticity in another  essential way. We will make use of an extendibility  result  for local  Killing fields proved first for Nomizu in the Riemannian setting~\cite{Nomizu} and generalized then for rigid geometric structures by Amores et Gromov~\cite{Amores,Gro} (see also~\cite{CQ,DG,Feres}). This phenomena states roughly that a  local Killing field of a {\it rigid analytic} geometric structure can be extended along any curve in $M$. We then get a multivalued Killing field defined on all of $M$ or, equivalently, a global Killing field defined
 on the universal cover. In particular, the Killing algebra in the neighborhood of any point is the same (as long as $M$ is connected).

 As an application of the Nomizu's phenomena we give (compare with theorem~\ref{model}):
 
 \begin{theorem} Let $M$ be a compact simply connected real analytic  manifold admitting a real analytic locally homogeneous  rigid geometric structure. Then $M$ is isomorphic
 to a homogeneous space $G/I$ endowed with a $G$-invariant geometric structure.
 \end{theorem}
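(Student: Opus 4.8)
The plan is to upgrade the purely local conclusion of Theorem~\ref{model} to a global one, trading the dimension restriction there for the hypotheses of analyticity, rigidity, compactness and simple connectedness. First I would invoke the Nomizu phenomenon recalled above~\cite{Amores,Gro}: since $\phi$ is a rigid real analytic structure, every local Killing field extends along any path in $M$, producing a priori a multivalued Killing field on $M$. But $M$ is simply connected, so the monodromy of this analytic continuation is trivial and each local Killing field extends to a single globally defined analytic Killing field. In this way the Killing algebra $\mathfrak{g}$ of $\phi$ is realized as a finite dimensional Lie algebra of globally defined analytic vector fields on $M$, each preserving $\phi$.

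Next I would integrate this infinitesimal action to a group action. Because $M$ is compact, every smooth vector field on $M$ is complete, so each element of $\mathfrak{g}$ generates a global one parameter group of automorphisms of $\phi$. A finite dimensional Lie algebra of complete vector fields integrates to a smooth action of the connected simply connected Lie group $G$ with Lie algebra $\mathfrak{g}$; this $G$ then acts on $M$ by automorphisms of $\phi$. Local homogeneity, which holds on all of $M$, means exactly that the evaluation map $\mathfrak{g} \to T_{m}M$ is onto for every $m \in M$, so the $G$-orbits are open, hence also closed; since $M$ is connected, the $G$-action is transitive.

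Fixing a base point $m_{0}$ with isotropy subgroup $I = \mathrm{Stab}_{G}(m_{0})$ then identifies $M$ with the homogeneous space $G/I$. Here $I$ is automatically closed, being the stabilizer of a point under a genuine group action; in contrast with Theorem~\ref{model}, no bound on the dimension is needed, since the closedness of $I$ comes for free from the global action rather than from Mostow's theorem. Finally, since $\phi$ is invariant under every element of $G$, it corresponds under the diffeomorphism $M \cong G/I$ to a $G$-invariant geometric structure on $G/I$, and this diffeomorphism is by construction an isomorphism of geometric structures, which is the desired conclusion.

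The step I expect to be the main obstacle is the passage from the infinitesimal to the global action, that is, guaranteeing that the extended Killing fields genuinely assemble into a bona fide $G$-action rather than merely a local one. This is precisely where compactness and simple connectedness are both indispensable: without completeness of the vector fields (furnished by compactness) the flows need not be defined for all time, and without simple connectedness the Nomizu extensions would only be multivalued, yielding a homogeneous structure on the universal cover rather than on $M$ itself.
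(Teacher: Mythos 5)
Your proposal is correct and follows essentially the same route as the paper: Nomizu--Amores extension of local Killing fields (single-valued by simple connectedness), completeness of the resulting vector fields by compactness, integration to an action of the simply connected group $G$, and openness of orbits from local homogeneity forcing transitivity. The paper's own proof is just a compressed version of this argument.
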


 \begin{proof}  Since $\phi$ is locally homogeneous and $M$ is simply connected and compact, the local transitive action of the Killing algebra extends to a global action of the associated simply connected Lie group $G$  (we need compactness to insure that vector fields on $M$ are complete). All orbits have to be open, so there is only one orbit: the
 action is transitive and $M$ is a homogeneous space.
 \end{proof}
 
Let's  go back now to the proof of theorem~\ref{affine connection}. As before, in the real analytic setting  Gromov's stratification theorem shows that the locally homogeneous open dense set has to be dense~\cite{Gro, DG}. Note also 
 that Nomizu's extension phenomena doesn't imply that the extension of a family of linearly independent Killing fields, stays linearly independent. In general, the extension of a
 localy transitive Killing algebra, fails to be transitive on a nowhere dense analytic subset $S$ in $M$.  The unimodular affine connection is locally homogeneous on each connected
 component of $M \setminus S$. \\

{ \it  We prove now that  $S$ is empty.} \\

 Assume by contradiction that $S$ is not empty. Then we have the following crucial:
 
 \begin{lemma}  \label{dim isotropy}
 (i) The Killing algebra $\mathfrak{g}$ of $\nabla$  has dimension two and the isotropy algebra at a point of  $S$ is one dimensional.
 
 (ii) $\mathfrak{g}$  is isomorphic to the Lie algebra of the affine group of the line.
 \end{lemma}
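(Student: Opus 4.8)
The plan is to pin down $\mathfrak{g}$ and its isotropy by combining upper semicontinuity of the isotropy dimension with the two a priori bounds available for a unimodular connection. Write $\mathfrak{i}(p)\subseteq\mathfrak{g}$ for the isotropy subalgebra at $p$, i.e. the Killing fields vanishing at $p$. Since $\nabla$ is locally homogeneous on $M\setminus S$, the evaluation $\mathfrak{g}\to T_pM$ is onto for $p\notin S$, so $\dim\mathfrak{i}(p)=\dim\mathfrak{g}-2$ there; while for $m\in S$ the orbit is not open, so the evaluation has rank $\le 1$ and $\dim\mathfrak{i}(m)\ge\dim\mathfrak{g}-1$. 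Two constraints bound these dimensions. First, the one-jet map $Z\mapsto D_pZ\in\mathfrak{gl}(T_pM)$ is injective on $\mathfrak{i}(p)$ (an affine Killing field with vanishing one-jet at a point is trivial), and since the parallel volume form is preserved the image lies in $\mathfrak{sl}(T_pM)\cong\mathfrak{sl}(2,\RR)$; hence $\dim\mathfrak{i}(p)\le 3$ everywhere. Second, Lemma~\ref{unimodularlemma} forbids isotropy of dimension exactly $2$. Applying these at a generic point gives $\dim\mathfrak{g}-2\in\{0,1,3\}$, i.e. $\dim\mathfrak{g}\in\{2,3,5\}$.

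Next I would eliminate the values $5$ and $3$. If $\dim\mathfrak{g}=5$ then at $m\in S$ we would need $\dim\mathfrak{i}(m)\ge 4$, impossible since $\dim\mathfrak{i}(m)\le 3$. If $\dim\mathfrak{g}=3$ the generic isotropy is one dimensional, and at $m\in S$ it jumps to dimension $\ge 2$; dimension $2$ being excluded by Lemma~\ref{unimodularlemma}, it must be $3$. Then $\mathfrak{i}(m)=\mathfrak{g}$ embeds isomorphically onto $\mathfrak{sl}(2,\RR)$, $m$ is a fixed point, and in exponential coordinates at $m$ every element of $\mathfrak{g}$ is linear, so $\mathfrak{g}$ acts near $m$ as the standard linear $SL(2,\RR)$-action on $T_mM$. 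Its orbits are $\{m\}$ and a punctured neighbourhood, whose isotropy is a one-parameter unipotent subgroup; thus $M\setminus S$ is locally modelled, near $m$, on $SL(2,\RR)/U$ with $U$ unipotent. This is exactly the homogeneous space ruled out in the proof of Theorem~\ref{projective}: it carries an $SL(2,\RR)$-invariant vector field (the Euler field) of constant divergence $2$ with respect to the invariant volume, and since this field and its divergence are intrinsic they descend to $M$ (where $S=\{m\}$ has measure zero), contradicting invariance of the total volume under its flow. Hence $\dim\mathfrak{g}=2$, and at $m\in S$ the isotropy is then of dimension $1$ (it is $\ge 1$, is $\le 2=\dim\mathfrak{g}$, and cannot equal $2$), which is statement (i).

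For (ii) I would analyse the one-dimensional isotropy $\mathfrak{i}(m)=\RR X$ at $m\in S$ together with a field $Y\in\mathfrak{g}$ with $Y(m)\ne 0$, so that $\{X,Y\}$ is a basis of $\mathfrak{g}$. Writing $A:=D_mX\in\mathfrak{sl}(2,\RR)\setminus\{0\}$ for the linear isotropy, the identity $[X,Y](m)=-A\,Y(m)$ (valid because $X(m)=0$) shows that $\mathfrak{g}$ is abelian precisely when $A\,Y(m)=0$, that is when $Y(m)\in\ker A$; since $\operatorname{tr}A=0$ this forces $A$ nilpotent, whereas if $\mathfrak{g}$ is non-abelian then $Y(m)$ is an eigenvector of $A$ for a nonzero eigenvalue and $A$ is $\RR$-diagonalisable. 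So proving (ii) amounts to excluding the case in which the linear isotropy is a nonzero nilpotent, equivalently a unipotent shear.

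The main obstacle is precisely this last exclusion, the nilpotent (abelian) case, which cannot be reached by the dimension count alone. Here I would argue directly at the level of the connection rather than of the group. In exponential coordinates at $m$ one may normalise $X=x_2\,\partial_{x_1}$, and integrating $[X,Y]=0$ together with analyticity across $\{x_2=0\}$ forces $Y=(\beta(x_2)x_1+c(x_2))\partial_{x_1}+x_2\beta(x_2)\partial_{x_2}$ with $c(0)=1$; imposing that both $X$ and $Y$ be Killing fields of a unimodular analytic $\nabla$ and that $\langle X,Y\rangle$ act simply transitively on $\{x_2\ne 0\}$ then overdetermines the Christoffel symbols. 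The expectation is that the resulting connection either extends as a locally homogeneous connection across $\{x_2=0\}$ --- contradicting $m\in S$ --- or fails to preserve any analytic volume form, contradicting unimodularity; either way the nilpotent case is impossible, leaving $A$ hyperbolic and $\mathfrak{g}\cong\mathfrak{aff}(\RR)$, which is (ii).
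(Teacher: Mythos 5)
Your part (i) is essentially correct and follows the paper's dimension count (evaluation map of rank $\leq 1$ on $S$, isotropy embedded in $\mathfrak{sl}(2,\RR)$, dimension $2$ excluded by Lemma~\ref{unimodularlemma}); for the elimination of $\dim\mathfrak{g}\geq 3$ you replace the paper's route (uniqueness of the $SL(2,\RR)$-invariant connection on $SL(2,\RR)/I$ via \cite{AM-K}, then analyticity) by the divergence argument on the Euler field borrowed from the proof of Theorem~\ref{projective}. That substitution works: the invariant field extends linearly, hence smoothly with a zero, across the finite fixed-point set, its divergence is a nonzero constant on the dense set $M\setminus S$, and $\int_{M}L_{X}vol=\int_{M}d(\iota_{X}vol)=0$ gives the contradiction without quoting \cite{AM-K}.

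Part (ii), however, is not a proof, for two reasons. First, your reduction contains an error: it is false that a non-abelian $\mathfrak{g}$ forces the linear isotropy $A$ to be $\RR$-diagonalisable. If the isotropy generator spans the derived algebra (i.e.\ $[X,Y]=\pm X$ with $X(m)=0$), then $AY(m)=0$ and $A$ is again nilpotent; this is exactly Case~I (unipotent isotropy) of the paper's main proof, which is a genuine local possibility and is only excluded much later by \emph{global} arguments (nonexistence of lattices in $Aff(\RR)$, accumulation of negative orbits on $S$). Hence your stated goal --- excluding nilpotent linear isotropy by a local computation at $m$ --- cannot succeed: it would prove too much. Second, even restricted to the abelian case, your argument is only a plan: ``the expectation is that the resulting connection either extends \dots or fails to preserve any analytic volume form'' is not carried out, and nothing guarantees the Christoffel computation closes up. The idea you are missing makes the abelian case a two-line exclusion: if $[K_{1},K_{2}]=0$, then $K_{i}\cdot vol(K_{1},K_{2})=(L_{K_{i}}vol)(K_{1},K_{2})+vol([K_{i},K_{1}],K_{2})+vol(K_{1},[K_{i},K_{2}])=0$, since Killing fields of a unimodular connection preserve the parallel volume form. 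Thus $vol(K_{1},K_{2})$ is constant on the locally homogeneous (dense) set, where it is nonzero because $K_{1},K_{2}$ span the tangent space; extending $K_{1},K_{2}$ analytically to a neighbourhood of $s\in S$ (Nomizu--Amores) and passing to the limit gives $vol(K_{1}(s),K_{2}(s))\neq 0$, so the action is transitive at $s$, contradicting $s\in S$. This forces $\mathfrak{g}\cong\mathfrak{aff}(\RR)$ while leaving open, as it must, whether the one-dimensional isotropy at $s$ is semi-simple or unipotent.
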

 
 \begin{proof}  (i) Since the Killing algebra admits a nontrivial open orbit in $M$, its dimension is $\geq 2$. Pick up a point $s \in S$ and consider the linear morphism  $ev(s): \mathfrak{g} \to T_{s}M$ which associates to an element $K \in \mathfrak{g}$ its value $K(s)$. The kernel of this 
 morphism is the isotropy $\mathcal{I}$ at $s$.  Since the $\mathfrak{g}$-action is nontransitive in the neighborhood of $s$, the range of $ev(s)$ is $\leq 1$. This implies
 that the isotropy at $s$ is of dimension at least  dim  $\mathfrak{g}-1$.
 
 Assume, by contradiction,  that the Killing algebra has dimension at least three. Then the isotropy at $s$ is of dimension at least two.
 By lemma~\ref{unimodularlemma},  this dimension  never equals   two. Consequently,  the isotropy
  algebra at $s \in S$ is three dimensional.  The isotropy  algebra contains then a copy of the Lie algebra of $SL(2,\RR)$ (see the proof of lemma~\ref{unimodularlemma}).
  
   The local action of $SL(2,\RR)$ in the neighborhood of $s$ is conjugated to the the standard linear action of  $SL(2,\RR)$
     on  $\RR^2$.  This action has two orbits: the point $s$ and $\RR^2 \setminus \{s \} $. The open orbit $\RR^2 \setminus \{s \} $ identifies with a homogeneous space
     $SL(2,\RR)/I$. Precisely, 
      the stabilizer  $I$  in $G=SL(2, \RR)$ of a nonzero vector  $x \in T_{s}M$  is conjugated to the following   one parameter  unipotent subgroup of 
     $SL(2, \RR)$:   $\left(  \begin{array}{cc}
                                                                1  &   b \\
                                                                 0     &  1 \\
                                                                 \end{array} \right)$, with  $b \in \RR$. The action of $SL(2,\RR)$ on $SL(2,\RR)/I$ preserves the induced flat torsion free affine connection coming from $\RR^2$.

  By proposition 8 in ~\cite{AM-K}, the only $SL(2,\RR)$-invariant affine connection on $SL(2,\RR)/I$ is the previous flat  torsion free connection. Another way to prove this
  result is to consider the difference of a  $SL(2,\RR)$-invariant connection with the standard one. We get a $(2,1)$-tensor on $SL(2,\RR)/I$ which is  $SL(2,\RR)$-invariant. Equivalently, we get a  $ad(I)$-invariant 
  $(2,1)$-tensor on the quotient of the  Lie algebra $sl(2,\RR)$ by the infinitesimal generator of $I$~\cite{AVL}. A straightforward computation shows that the tensor has to be  trivial. This  gives a different  proof of the unicity of a  $SL(2,\RR)$-invariant connection.

  By analyticity,   $\nabla$ is torsion free and flat on all of $M$. In particular, $\nabla$ is locally homogeneous on all of $M$. This is in contradiction with our assumption.

 (ii)  The Killing algebra is  two dimensional. Thus it coincides with
  $\RR^2$ or with the Lie algebra of the affine group.  Consider $K_{1}, K_{2}$ a basis of the local  Killing  algebra and extend $K_{1}, K_{2}$ along a topological disk reaching 
 a point $s$ in   $S$.
 
 Recall that $\nabla$ is unimodular and let $vol$ be the volume form associated to $\nabla$.

  In the  case where the Lie algebra is $\RR^2$, $vol(K_{1}, K_{2})$ is a nonzero constant  (for being invariant by the Killing algebra and thus constant on the locally homogenous open set). Hence the  Lie algebra acts transitively in the neighborhood of $s \in S$. This is a contradiction: $S$ is then empty and $\phi$ is locally homogeneous
  on all of $M$.  \end{proof}

  For the sequel,  let $K_{1}, K_{2}$ be  two  local Killing fields at $s \in S$  which span the Killing  algebra. We  assume, without loss of generality, that $K_{1}$ and $K_{2}$ verify the Lie bracket relation  $\lbrack K_{1}, K_{2} \rbrack =K_{1}$.
  
   Recall that $K_{1}, K_{2}$ don't vanish both at a point $s \in S$. Indeed, if not the isotropy at $s$ has dimension  $2$, which  is impossible by lemma~\ref{unimodularlemma}.

  Notice that  $vol(K_{1}, K_{2})$ is not invariant by the action of the Killing algebra (since
  the adjoint representation of $Aff(\RR)$ is nontrivial). But we still have:
  
  \begin{proposition} \label{local volume} The local function $vol(K_{1},K_{2})$ is constant on the orbits of the flow generated by $K_{1}$.
  \end{proposition}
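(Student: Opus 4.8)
The plan is to prove the claim by showing directly that the Lie derivative of the local function $f := vol(K_1, K_2)$ along $K_1$ vanishes, since the identity $L_{K_1} f = 0$ is precisely the assertion that $f$ is constant along the orbits of the flow generated by $K_1$.

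The key input is the unimodularity of $\nabla$: the parallel volume form $vol$ is invariant under every local Killing field, so in particular $L_{K_1} vol = 0$ and $L_{K_2} vol = 0$. First I would use that the Lie derivative is a derivation commuting with contractions, so that applying $L_{K_1}$ to the scalar $vol(K_1, K_2)$ gives
$$L_{K_1}\bigl(vol(K_1,K_2)\bigr) = (L_{K_1} vol)(K_1,K_2) + vol([K_1,K_1],K_2) + vol(K_1,[K_1,K_2]).$$
Then I would evaluate the three terms on the right. The first vanishes because $K_1$ preserves $vol$; the second vanishes because $[K_1,K_1]=0$; and for the third I would invoke the normalization $[K_1,K_2]=K_1$, which rewrites it as $vol(K_1,K_1)$, and this is zero by the antisymmetry of the $2$-form $vol$. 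Hence $L_{K_1} f = 0$, giving the proposition.

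I do not expect a genuine obstacle: once one records that $vol$ is Killing-invariant and that the Killing algebra is the affine algebra with $[K_1,K_2]=K_1$, the statement reduces to a one-line formal computation. The only point needing minor care is the correct derivation rule for the Lie derivative acting on a contraction of $vol$ with the two fields, but the antisymmetry of $vol$ annihilates the surviving term in any case, so the normalization of signs is harmless here.
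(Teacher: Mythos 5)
Your proof is correct and is essentially the paper's argument in infinitesimal form: the paper observes that $ad(K_1)$ is nilpotent, hence the adjoint action of the flow of $K_1$ on $\mathfrak{g}$ is unimodular, so the (volume-preserving) flow of $K_1$ preserves $vol(K_1,K_2)$ --- and the vanishing of your two bracket terms is exactly the statement that $ad(K_1)$ is trace-free. No gap; the two inputs you use (Killing-invariance of $vol$ and the normalization $[K_1,K_2]=K_1$) are both established in the paper just before the proposition.
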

  
  \begin{proof}  The adjoint action $ad(K_{1})$ on $\mathfrak{g}$ is nilpotent. In particular, the adjoint representation of the one parameter subgroup generated by $K_{1}$
  is unimodular. Consequenlty,  the local flow generated by $K_{1}$ preserves $vol(K_{1},K_{2})$.
  \end{proof}
  
   \begin{proposition} \label{dim1} $S$ is a smooth $1$-dimensional manifold  (diffeomorphic to a finite union of circles).
 \end{proposition}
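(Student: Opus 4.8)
The plan is to realize $S$ as the zero locus of the analytic function $f = vol(K_1,K_2)$ and to prove that this locus is everywhere a smooth curve. First I would note that on the locally homogeneous open set $M\setminus S$ the Killing fields $K_1,K_2$ span the tangent space, so $f\neq 0$ there, whereas at any $s\in S$ the evaluation map $ev(s)$ has rank $1$ by Lemma~\ref{dim isotropy}, so $K_1(s)$ and $K_2(s)$ are linearly dependent and $f(s)=0$. Hence, locally, $S$ coincides with $\{f=0\}$. The set $M\setminus S$, where the Killing algebra acts transitively, is open, so $S$ is closed, and being a closed subset of the compact $M$ it is compact. The function $f$ is analytic and, by connectedness of $M$, not identically zero; moreover Proposition~\ref{local volume} tells us it is constant along the flow of $K_1$, i.e. $df(K_1)=0$. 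It then remains to check that $S$ is a $1$-dimensional submanifold near each of its points, and I would treat separately the points where $K_1$ does or does not vanish.

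At a point $s\in S$ with $K_1(s)\neq 0$ I would pass to analytic flow-box coordinates $(x,y)$ in which $K_1=\partial/\partial x$. Since $f$ is constant along $K_1$ it depends only on $y$, and as an analytic function of the single variable $y$ (not identically zero) it has isolated zeros. Thus near $s$ the set $S=\{f=0\}$ is a finite union of lines $\{y=\text{const}\}$, a smooth curve, independently of the order of vanishing of $f$.

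The delicate case, and the main obstacle, is a point $s\in S$ with $K_1(s)=0$; there $K_2(s)\neq 0$, since otherwise the isotropy at $s$ would be two dimensional, contradicting Lemma~\ref{unimodularlemma}. I would choose analytic flow-box coordinates $(u,v)$ with $K_2=\partial/\partial v$ and exploit the bracket relation. Writing $K_1=a\,\partial_u+b\,\partial_v$, the identity $[K_2,K_1]=-K_1$ forces $\partial_v a=-a$ and $\partial_v b=-b$, hence
\[
K_1 = e^{-v}\bigl(a_0(u)\,\partial_u + b_0(u)\,\partial_v\bigr)
\]
for analytic functions $a_0,b_0$ of $u$ alone. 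Writing the parallel volume form as $vol=\rho\,du\wedge dv$ with $\rho$ nowhere zero, a direct computation gives
\[
f = vol(K_1,K_2) = \rho\, e^{-v}\, a_0(u).
\]
Since $\rho$ and $e^{-v}$ never vanish, $S=\{a_0=0\}$ near $s$. Moreover $a_0\not\equiv 0$, for otherwise $K_1$ would be everywhere proportional to $K_2$ and $f$ would vanish identically, contradicting transitivity on $M\setminus S$. As $a_0$ is analytic in the single variable $u$ its zeros are isolated, so locally $S$ is a finite union of the $K_2$-orbits $\{u=\text{const}\}$, again a smooth curve.

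Combining the two cases, $S$ is a smooth $1$-dimensional submanifold of $M$; being compact and without boundary, it is diffeomorphic to a finite disjoint union of circles, which is the assertion. The only genuine work lies in the analysis at the zeros of $K_1$: a priori $S$ might be singular there, but the bracket relation makes $f$ factor through the single-variable function $a_0(u)$, whose vanishing locus is automatically a smooth curve transverse to $K_2$.
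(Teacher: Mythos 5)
Your proof is correct, but it takes a genuinely different route from the paper's. The paper argues via the orbit structure of the Killing algebra: since $ev(s)$ has rank one at each $s\in S$ and $S$ is invariant under the local Killing flows, the $\mathfrak{g}$-orbit of $s$ is a one-dimensional curve contained in $S$, the isotropy preserves its tangent line, and $\mathfrak{g}$ therefore acts transitively on each connected component of $S$; smoothness and the circle structure then follow from this transitivity together with compactness of $M$. You instead identify $S$ locally with the vanishing locus of the analytic function $f=vol(K_1,K_2)$ and show by a flow-box computation that $f$ always factors through a nonzero unit times an analytic function of a \emph{single} variable transverse to a nonvanishing Killing field ($K_1$ where $K_1\neq 0$, using Proposition~\ref{local volume}; $K_2$ where $K_1=0$, using the bracket relation to get $f=\rho e^{-v}a_0(u)$), so that $\{f=0\}$ is automatically a disjoint union of smooth flow lines regardless of the order of vanishing. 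This buys an explicit treatment of the points where the analytic set $\{f=0\}$ could a priori be singular --- a point the paper's terse argument passes over --- at the cost of a case analysis; conversely, the paper's orbit argument directly records the fact, used later in the text, that each component of $S$ is a single $\mathfrak{g}$-orbit (your computation also yields this, since near each point $S$ is a flow line of a Killing field). Both arguments legitimately rely on the previously established facts that $\dim\mathfrak{g}=2$, that $K_1,K_2$ do not vanish simultaneously on $S$, and that $S$ is nowhere dense, so $f\not\equiv 0$ on any open set.
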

 
 \begin{proof} By lemma~\ref{dim isotropy}, the isotropy 
    $\mathcal{I}$ at a chosen point $s \in S$  is one dimensional and the range of the map $ev(s)$ equals  one. In particular, the orbit of $s$ under the action of $\mathfrak{g}$
    is one dimensional. The image of $ev(s)$ coincides 
    with  $T_{s}S$.  Consequently, the $\mathcal{I}$-action on $T_{s}M$ preserves the line $T_{s}S$.

    This shows that  $\mathfrak{g}$ acts
  transitively on each connected component of $S$.   
  
   In particular, each connected component of  $S$ is a one dimensional smooth submanifold in $M$ (recall that $S$ is nowhere dense in $M$).

  Since  $M$ is compact, each connected component of $S$ is diffeomorphic to a circle.

  \end{proof}

  We also have:

    \begin{proposition}   \label{cadre}  (i) $M \setminus S$ is locally modelled on the affine group of the line endowed with a translation invariant connection. 
    
  (ii)   $M \setminus S$ admits a (nonsingular) $\mathfrak{g}$-invariant  foliation $\mathcal{F}_{1}$  by lines.
  
  (iii) $\mathcal{F}_{1}$ coincides with the kernel of a (nonsingular) closed $\mathfrak{g}$-invariant  one form~$\omega$.
  
  (iv) The leafs of $\mathcal{F}_{1}$ are closed (in $M \setminus S$). They are endowed with a $\mathfrak{g}$-invariant  translation structure.
  
  (v) The space of leafs of $\mathcal{F}_{1}$  is Hausdorff.
  
   \end{proposition}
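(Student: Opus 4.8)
The plan is to treat (i)--(iii) as essentially formal consequences of the algebraic structure of $\mathfrak{g}=\mathrm{aff}(\RR)$, and to locate the real work in the global statements (iv) and (v). For (i), observe that on $M\setminus S$ the evaluation map $ev(u)\colon\mathfrak{g}\to T_{u}M$ is a linear isomorphism (the $\mathfrak g$-action is transitive there and $\dim\mathfrak g=2=\dim M$), so the isotropy is trivial and the local action is simply transitive; applying Theorem~\ref{model} on $M\setminus S$ identifies it, locally, with $G/I$ for $I=\{e\}$, i.e. with $G=Aff(\RR)$ (which is simply connected) carrying its left-invariant, hence translation invariant, connection. For (ii), the commutator ideal $[\mathfrak g,\mathfrak g]=\RR K_{1}$ is a characteristic line of $\mathfrak g$, hence preserved by every automorphism of $\mathfrak g$ and in particular by the monodromy of the multivalued Killing fields; therefore $u\mapsto ev(u)(\RR K_{1})=\RR K_{1}(u)$ is a globally well-defined line field on $M\setminus S$, nonsingular since $K_{1}(u)\neq 0$ there and $\mathfrak g$-invariant since the flow of $K_{1}$ fixes its direction while the flow of $K_{2}$ only rescales it (as $[K_{2},K_{1}]=-K_{1}$). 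This line field integrates to $\mathcal F_{1}$.

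For (iii), normalize $K_{2}$ by $[K_{1},K_{2}]=K_{1}$ and set $\omega(K_{1})=0$, $\omega(K_{2})=1$. A direct computation shows that every automorphism of $\mathrm{aff}(\RR)$ acts by $K_{1}\mapsto aK_{1}$, $K_{2}\mapsto K_{2}+cK_{1}$, so it fixes the class of $K_{2}$ in $\mathfrak g/[\mathfrak g,\mathfrak g]$ with coefficient exactly $1$; hence the prescription is monodromy-invariant and defines $\omega$ globally on $M\setminus S$, with $\ker\omega=\mathcal F_{1}$. Cartan's formula gives $d\omega(K_{1},K_{2})=K_{1}(\omega(K_{2}))-K_{2}(\omega(K_{1}))-\omega([K_{1},K_{2}])=-\omega(K_{1})=0$, and since $\{K_{1},K_{2}\}$ frames $M\setminus S$ this yields $d\omega=0$; then $L_{K_{j}}\omega=d(\iota_{K_{j}}\omega)+\iota_{K_{j}}d\omega=d(\mathrm{const})=0$ gives the $\mathfrak g$-invariance. (Equivalently, $\iota_{K_{1}}vol=vol(K_{1},K_{2})\,\omega$ is a second closed form with the same kernel.)

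For (iv) and (v), each leaf of $\mathcal F_{1}$ is an orbit of the $K_{1}$-flow, and parametrizing it by the flow time of $K_{1}$ endows it with a translation structure in which $K_{1}$ is the unit parallel field; this is natural for the Killing algebra ($K_{1}$ acts by translations, $K_{2}$ carries leaves to leaves by affine rescalings), giving the $\mathfrak g$-invariant translation structure. The substantive points are the closedness of the leaves in $M\setminus S$ and the Hausdorffness of the leaf space, and I would deduce both from the nonsingular closed form $\omega$: locally $\omega=dh$ for a submersion to $\RR$ whose fibres are the leaves, so the leaf space is locally modelled on $\RR$, and a leaf fails to be closed exactly when the transverse holonomy recurs. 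That holonomy is valued in the scalings $Aff(\RR)/H\cong\RR_{>0}$ (the monodromy acts on $\RR K_{1}$ by $K_{1}\mapsto aK_{1}$), so the leaf space is Hausdorff, and the leaves closed, as soon as this holonomy is discrete. \emph{The main obstacle is precisely to exclude a dense (non-discrete) transverse holonomy.} Here I would combine the compactness of $M$ with the behaviour of $\mathcal F_{1}$ near the singular circles $S$ (Proposition~\ref{dim1}) and with the constancy of $f=vol(K_{1},K_{2})$ along leaves (Proposition~\ref{local volume}): the latter forces $f$ to descend to a function on the leaf space transforming by the scaling factor, which constrains the holonomy to a discrete group and thereby makes the leaf space a Hausdorff $1$-manifold with closed leaves.
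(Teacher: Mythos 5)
Your treatment of (i)--(iii) is essentially the paper's own proof: simple transitivity of the $\mathfrak{g}$-action off $S$ for (i), invariance of the derived ideal $\RR K_{1}$ under automorphisms of $\mathfrak{g}$ (hence under the monodromy of the multivalued Killing fields) for (ii), and the same normalization $\omega(K_{1})=0$, $\omega(K_{2})=1$ together with the Cartan formula for (iii). These parts are correct and need no change.

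The gap is in (iv)--(v). You rightly note that a nonsingular closed one-form alone does not force closed leaves (an irrational linear foliation of the torus is the standard counterexample), and you reduce everything to excluding a dense transverse holonomy --- but you then only announce that the constancy of $f=vol(K_{1},K_{2})$ along leaves ``constrains the holonomy to a discrete group'', without an argument; as written this is a plan, not a proof. The paper's route is more direct: $f$ is taken to be a single-valued function on $M\setminus S$, constant on leaves by Proposition~\ref{local volume}, and it is a submersion transverse to $\mathcal{F}_{1}$ (since $L_{K_{2}}vol=0$ and $[K_{2},K_{1}]=-K_{1}$ give $K_{2}\cdot f=-f\neq 0$ off $S$); hence each leaf is open and closed in a level set of $f$ and therefore closed in $M\setminus S$, and Hausdorffness of the leaf space then follows from Molino's theorem for transversally Riemannian foliations with closed leaves --- no discussion of holonomy discreteness is needed. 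The one point you must confront to complete either version is single-valuedness of $f$: under the monodromy $K_{1}\mapsto\alpha K_{1}$, $K_{2}\mapsto K_{2}+\beta K_{1}$, the quantity $vol(K_{1},K_{2})$ is multiplied by $\alpha$, so one has to kill the multiplicative character $\alpha$ (the paper does this along each leaf in Proposition~\ref{both extensions}, precisely by continuing $f$ around the leaf and using Proposition~\ref{local volume}). Your sketch silently assumes this, and your appeal to ``discreteness'' does not by itself supply it.
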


  \begin{proof}  (i) This comes from the fact that  the $\mathfrak{g}$-action  on $M \setminus S$ is simply transitive.
  
  (ii) The Lie bracket relation  $\lbrack K_{1}, K_{2} \rbrack =K_{1}$  implies
 that the flow of $K_{2}$ normalizes the flow of $K_{1}$ and thus the foliation spaned by $K_{1}$  is $K_{2}$-invariant. Consequently, the foliation spaned by $K_{1}$ is $\mathfrak{g}$-invariant  and it defines a foliation $\mathcal{F}_{1}$ well defined on $M \setminus S$.
 
 (iii) We locally define the one form $\omega$ such that $\omega(K_{1})=0$ and $\omega(K_{2})=1$. Since the basis $K_{1}, K_{2}$ is well defined, up to a Lie algebra isomorphism
 (which necessarily preserves  the derivative algebra $\RR K_{1}$  and sends $K_{2}$ on $K_{2} + \beta K_{1}$, for $\beta \in \RR$), $\omega$ is globally defined on $M \setminus S$.
 Also by Lie-Cartan's formula~\cite{AVL} $d \omega(K_{1},K_{2})=-\omega(\lbrack K_{1}, K_{2} \rbrack)=0$. 
 
 Thus the foliation is transversaly Riemannian in the sense of~\cite{Molino}. Another way to see it, is to observe that the projection of the local Killing field $K_{2} + \beta K_{1}$
 on $TM/ T\mathcal{F}_{1}$ is well defined (it doesn't depend on $\beta$). It defines a transverse vector field $\tilde K_{2}$ and, consequently, $\mathcal{F}_{1}$ is transversaly Riemannian.
 
 (iv) By proposition~\ref{local volume} and by  the previous point, $vol(K_{1},K_{2})$ is a  well define (nonconstant) function on $M \setminus S$  which is constant  on the leafs of $\mathcal{F}_{1}$. Therefore the leafs are closed in $M \setminus S$.

 Moreover, the action of the derivative algebra $\RR K_{1}$  preserves each leaf of $\mathcal{F}_{1}$. Hence each   leaf inherits a $\mathfrak{g}$-invariant translation structure.
 
 (v) This is a consequence of (iii) and (iv) (see~\cite{Molino}, chapter 3, Proposition 3.7). The connected components of the space of leafs are parametrized by the flow of $\tilde K_{2}$.
  \end{proof}

   Denote by $Y \in \mathfrak{g}$ a generator of $\mathcal{I}$ and by $X \in \mathfrak{g}$ a Killing field such that $X(s)$ span $T_{s}S$.
 By applying an automorphism of the Lie algebra of the affine group we can assume that either $Y=K_{1}$ and $X=K_{2}$, or $Y=K_{2}$ and $X=K_{1}$.\\
 
 {\bf Case I: unipotent isotropy}\\
 
 Assume first  that $Y=K_{1}$ and $X=K_{2}$.\\
   
   We study the local situation in the neighborhood of $s \in S$. For this local analysis  we will also denote by $S$ the connected component of $s$ in $S$.
   
   \begin{proposition} \label{unipotent isotropy} The isotropy at any point of $S$ is unipotent.
   \end{proposition}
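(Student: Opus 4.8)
The plan is to exploit three ingredients: the bracket relation $[K_1,K_2]=K_1$, the fact that in Case~I the isotropy at $s$ is $\RR K_1$, and the unimodularity of $\nabla$, which forces the linearization of any volume-preserving Killing field at a zero to be trace-free.

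First I would pin down the isotropy along the whole connected component $S$, not just at $s$. By Proposition~\ref{dim1}, $\mathfrak{g}$ acts transitively on $S$, and the isotropy subalgebras at two points of one orbit are conjugate under the adjoint action of the group generated by the Killing flows: if $p=\psi_t(q)$ for $\psi_t$ the flow of a Killing field, then $\mathcal{I}_p=\mathrm{Ad}(\exp(\pm t\,\cdot))\,\mathcal{I}_q$. Hence the isotropy at every $p\in S$ is $\mathrm{Ad}(g)(\RR K_1)$ for a suitable $g$. But $\RR K_1=[\mathfrak{g},\mathfrak{g}]$ is the derived ideal of the Lie algebra of $Aff(\RR)$, hence a characteristic ideal, invariant under every automorphism of $\mathfrak{g}$ and in particular under the whole adjoint group. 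Therefore the isotropy at every point $p\in S$ equals exactly $\RR K_1$; in particular $K_1(p)=0$, while $K_2(p)\neq 0$ since $K_2\notin\RR K_1$.

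Next I would compute the linear part of $K_1$ at a fixed point $p\in S$. Because $K_1(p)=0$, the endomorphism $A_p:=(DK_1)_p$ of $T_pM$ is well defined independently of coordinates, and the differential at $p$ of the flow $\exp(tK_1)$ (which fixes $p$) is $\exp(tA_p)$. As $\nabla$ is unimodular, $K_1$ preserves a volume form $vol$, so these differentials preserve $vol_p$ and lie in the special linear group of $T_pM$; thus $A_p$ is trace-free. Evaluating the bracket at $p$, where $K_1(p)=0$, gives $[K_1,K_2](p)=-(DK_1)_p\,K_2(p)=-A_p\,K_2(p)$. Combined with $[K_1,K_2]=K_1$ and $K_1(p)=0$, this yields $A_p\,K_2(p)=0$. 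Since $K_2(p)\neq 0$, the trace-free endomorphism $A_p$ of the two-dimensional space $T_pM$ has a nonzero kernel, so both of its eigenvalues vanish and $A_p$ is nilpotent. Consequently $\exp(tA_p)$ is a unipotent one-parameter subgroup, i.e. the isotropy at $p$ is unipotent.

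The only delicate point is the first step: upgrading ``the isotropy is $\RR K_1$ at $s$'' to ``at every point of $S$''. This rests on the conjugacy of isotropy subalgebras along a single orbit together with the adjoint-invariance of the derived ideal of $\emph{aff}$; everything afterwards is the elementary observation that a trace-free $2\times 2$ matrix with a nonzero kernel is nilpotent. Note that Nomizu's extension phenomenon is used here only implicitly, to guarantee that the generators $K_1,K_2$ are genuinely defined in a neighborhood of each point of the circle $S$.
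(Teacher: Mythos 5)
Your proof is correct and follows essentially the same route as the paper: the bracket relation $[K_1,K_2]=K_1$ forces the linearized isotropy at a point of $S$ to annihilate the tangent direction $K_2(p)$ spanning $T_pS$, and unimodularity then makes this trace-free $2\times 2$ endomorphism nilpotent, hence the isotropy unipotent. The only (harmless) divergence is in how you propagate the identification $\mathcal{I}_p=\RR K_1$ to every point of $S$: you use Ad-conjugacy of isotropy subalgebras along an orbit together with the fact that $\RR K_1=[\mathfrak{g},\mathfrak{g}]$ is a characteristic ideal, whereas the paper observes that $Y=K_1$ must vanish along the geodesic through $s$ tangent to $T_sS$, which locally coincides with $S$.
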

   
   \begin{proof}

    Because of the Lie bracket relation,  the isotropy  $\mathcal{I}$ at $s$ acts trivially on $T_{s}S$. This implies that its generator  $Y$ acts trivially on the unique geodesic passing
  through $s$ and tangent to  the direction $T_{s}S$. Consequently $Y$ vanishes on this geodesic which has to coincide locally to $S$ (for $S$ is the subset of $M$ where
  $\mathfrak{g}$ doesn't act freely).

  Since the $Y$-action  on $T_{s}M$  is trivial on $T_{s}S$ and volume preserving, it is unipotent. In local exponential coordinates  $(x,y)$ at $s$, $Y$ is linearized and so
  conjugated to the linear vector field $y \frac{\partial}{\partial x}$. In these coordinates $S$ identifies locally with $y=0$ and the time $t$ of  the flow generated by $Y$ is
   $(x,y) \to (x+ty,y)$.

   Notice that the flow of $Y$ preserves a unique foliation in the neighborhood of $s$, which is given by $dy=0$. We will see later that this rules out the case of a  connected
   components of $S$ with isotropy $Y=K_{2}$ (see  Step 2 in  the proof of Case II, below).
   \end{proof}
   
   \begin{proposition} \label{extension} $\mathcal{F}_{1}$ extends  to  a nonsingular foliation  with closed leafs defined on all of $M$. 
   \end{proposition}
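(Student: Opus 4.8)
The plan is to extend $\mathcal{F}_1$ across $S$ one connected component at a time, using the local normal form supplied by Proposition~\ref{unipotent isotropy}, and then to read off closedness of the leaves from the vanishing of $vol(K_1,K_2)$ along $S$. We are in Case~I, so the isotropy generator is $Y=K_1$ and $X=K_2$ spans $T_sS$.

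First I would build the extension locally. Fix $s$ in a component of $S$ and work in the exponential coordinates $(x,y)$ of Proposition~\ref{unipotent isotropy}, in which $S=\{y=0\}$ locally and $K_1=y\,\partial_x$. On $M\setminus S$ the foliation $\mathcal{F}_1$ is tangent to $K_1$ (Proposition~\ref{cadre}(ii)), so for $y\neq 0$ its tangent line is $\RR\,(y\,\partial_x)=\RR\,\partial_x$. This line field is analytic and nowhere vanishing on the \emph{entire} chart, including $\{y=0\}$; hence $T\mathcal{F}_1$ extends analytically across $S$, and the foliation it integrates is precisely the horizontal foliation $dy=0$, i.e. the unique $K_1$-invariant foliation singled out at the end of Proposition~\ref{unipotent isotropy}. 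Note that the component of $S$ itself now appears as the leaf $\{y=0\}$, consistently with $T_sS=\RR\,\partial_x$. Since on the overlap $\{y\neq 0\}$ this local extension coincides with $\mathcal{F}_1$, and near $S$ it is characterised intrinsically as the unique $K_1$-invariant foliation, the local pieces agree on overlaps and glue into a globally defined nonsingular analytic foliation $\widehat{\mathcal{F}_1}$ on all of $M$, whose leaves contained in $S$ are exactly the components of $S$, which are circles by Proposition~\ref{dim1}.

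It then remains to verify that every leaf is closed. The leaves lying in $S$ are circles, hence closed. For a leaf $L\subset M\setminus S$, recall that $h:=vol(K_1,K_2)$ is constant on $L$, say $h\equiv c_L$ (Propositions~\ref{local volume} and~\ref{cadre}(iv)), and that $c_L\neq 0$ because $K_1,K_2$ are linearly independent on $M\setminus S$, where $\mathfrak{g}$ acts simply transitively (Proposition~\ref{cadre}(i)). On the other hand $K_1=Y$ vanishes along $S$ (Proposition~\ref{unipotent isotropy}), so $h=vol(K_1,K_2)\to 0$ as one approaches $S$. Consequently $L$ cannot accumulate on $S$: a sequence in $L$ converging to a point of $S$ would force $c_L=\lim h=0$, a contradiction. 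Therefore the closure of $L$ in $M$ is disjoint from $S$, and since $L$ is already closed in $M\setminus S$ by Proposition~\ref{cadre}(iv), it is closed in $M$; being a closed connected one-manifold in the compact surface $M$, it is a circle.

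The step I expect to require the most care is the gluing: one must check that the analytic line fields $\RR\,\partial_x$ produced in the various charts along a single circle component of $S$ fit together into one smooth foliation matching the two sides $y>0$ and $y<0$, with that component of $S$ genuinely a leaf rather than a singular locus. This is exactly where the normal form $K_1=y\,\partial_x$ of Proposition~\ref{unipotent isotropy}, together with the uniqueness of the $K_1$-invariant foliation near $S$, does the essential work; once it is in place, both the nonsingularity and the closedness of the leaves follow formally, the latter from the mere fact that $vol(K_1,K_2)$ vanishes on $S$ while being a nonzero constant on each leaf of $M\setminus S$.
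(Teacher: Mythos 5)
Your proposal is correct and follows essentially the same route as the paper: extend $\mathcal{F}_{1}$ across $S$ by adding the components of $S$ as leaves (the nonsingularity coming from the normal form $K_{1}=y\,\partial_x$), and deduce closedness from the fact that $vol(K_{1},K_{2})$ is a first integral on $M$ vanishing exactly on $S$. You merely supply more detail on the gluing and on why leaves of $M\setminus S$ cannot accumulate on $S$, both of which the paper leaves implicit.
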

   
   \begin{proof}  The foliation $\mathcal{F}_{1}$ spaned  by $Y$ extends on $M$ by adding the leafs  $S$.  Moreover,
    all the leafs of the foliation $\mathcal{F}_{1}$ are closed on $M$. Indeed, the first integral $vol(K_{1},K_{2})$ on $M \setminus S$ vanishes exactly on the extra leafs $S$. Therefore,
    $vol(K_{1},K_{2})$ defines a first integral on all of $M$.
    \end{proof}

  \begin{proposition}   \label{both extensions}
  (i)  Each connected component of $M \setminus S$ is diffeomorphic to a cylinder $\RR \times S^1$.

  (ii) The elements of  $\mathfrak{g}$ extend to  global Killing fields on each connected component of $M \setminus S$.
  \end{proposition}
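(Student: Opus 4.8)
Both parts will be read off from the global structure of the extended foliation $\mathcal{F}_{1}$. By Proposition~\ref{extension}, $\mathcal{F}_{1}$ is a nonsingular foliation of the compact manifold $M$ all of whose leaves are closed in $M$; a closed subset of a compact manifold is compact, and a compact connected $1$-manifold is a circle, so $M$ is foliated by circles. Near a point $s\in S$ the normal form of Proposition~\ref{unipotent isotropy} ($K_{1}=y\partial_{x}$, $S=\{y=0\}$) exhibits the nearby leaves $\{y=c\}$ as circles parallel to $S$ and shows that the holonomy of the leaf $S$ is trivial: the closed $\mathfrak{g}$-invariant form $\omega$ of Proposition~\ref{cadre}(iii) vanishes on the leaves, so a local primitive of $\omega$ is a holonomy-invariant transversal. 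Off $S$ the transverse field $\tilde K_{2}$ of Proposition~\ref{cadre}(iii) makes $\mathcal{F}_{1}$ transversely parallelizable, so the holonomy is trivial there too. A foliation of a compact manifold by compact leaves with trivial holonomy is a locally trivial fibration (Reeb--Ehresmann); hence $\mathcal{F}_{1}$ is the fibre foliation of a circle bundle $p\colon M\to B$ over a compact connected $1$-manifold, and since $M$ is compact, $B\cong S^{1}$ (consistently, $\chi(M)=0$ forces $M$ to be a torus or a Klein bottle).

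Part (i) now follows. By Proposition~\ref{dim1}, $S$ is a finite union of leaves, so $S=p^{-1}(F)$ for a finite $F\subset B\cong S^{1}$. Therefore $M\setminus S=p^{-1}(S^{1}\setminus F)$ is the disjoint union, over the open arcs of $S^{1}\setminus F$, of the circle bundles lying above them; the bundle over a contractible arc is trivial, hence diffeomorphic to $\RR\times S^{1}$. Thus each connected component of $M\setminus S$ is a cylinder.

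For part (ii) I would invoke the extension theorem of Amores and Gromov --- the Nomizu phenomenon, recalled above (see~\cite{Amores,Gro}): since $\nabla$ is analytic and rigid, every local Killing field continues analytically along every path, yielding single-valued Killing fields on the universal cover $\tilde C\cong\RR^{2}$ of a component $C$ together with a monodromy representation of $\pi_{1}(C)\cong\ZZ$ by automorphisms of $\mathfrak{g}$. The generator of $\pi_{1}(C)$ is a leaf, that is, a closed orbit of $K_{1}$ of some period $T>0$; because $[K_{1},K_{2}]=K_{1}$ (and $[K_{1},[K_{1},K_{2}]]=0$) the flow satisfies $(\Phi_{T}^{K_{1}})_{*}K_{2}=K_{2}-T\,K_{1}$, so the monodromy is the unipotent automorphism $K_{1}\mapsto K_{1}$, $K_{2}\mapsto K_{2}-T\,K_{1}$. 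Consequently $K_{1}$ is monodromy-invariant and descends to a genuine global Killing field on $C$, while the whole algebra $\mathfrak{g}$ is realised by global Killing fields on $\tilde C$; equivalently, the simply transitive local $Aff(\RR)$-action of Proposition~\ref{cadre}(i) lifts to a global action on $\tilde C$. This is the asserted global extension.

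The delicate points are the following. In (i) the crux is upgrading ``all leaves compact'' to ``honest circle fibration'': one must exclude exceptional leaves and non-Hausdorff behaviour, which is exactly why the triviality of the holonomy --- along $S$ via $\omega$, and off $S$ via $\tilde K_{2}$ --- is the essential input. In (ii) the heart of the matter is the monodromy computation around the core circle; the fact to extract is that this monodromy is unipotent, lying in $\exp(\RR\,\mathrm{ad}\,K_{1})$, so that although the individual field $K_{2}$ is only defined up to a multiple of $K_{1}$ on $C$, the foliation $\mathcal{F}_{1}$, the transverse datum and the parallel volume --- all monodromy-invariant --- are globally defined on $C$. These are precisely the invariants that the subsequent analysis of Case~I will use to force the contradiction showing $S=\emptyset$.
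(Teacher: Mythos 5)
Part (i) is correct, by a route slightly different from the paper's: you upgrade the compact-leaf foliation of Proposition~\ref{extension} to a circle fibration over $S^{1}$ (Reeb--Ehresmann, using triviality of the holonomy) and remove the finitely many fibres forming $S$, whereas the paper simply parametrizes the leaf space of each component by the flow of the transverse field $\tilde K_{2}$ and exhibits the component as $\rbrack a,b\lbrack\times S^{1}$. Both arguments work; yours buys a cleaner global picture ($M$ a torus or Klein bottle fibred in circles) at the cost of invoking the fibration theorem.

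Part (ii) has a genuine gap. The proposition asserts that $K_{1}$ \emph{and} $K_{2}$ become single-valued Killing fields on each connected component $C$ of $M\setminus S$, and this is exactly what the sequel uses: Propositions~\ref{negative orbits} and~\ref{both complete} run the flow of the honest vector field $X=K_{2}$ on $M\setminus S$ and produce a locally free $Aff(\RR)$-action on $C$. You deliver the extension only on the universal cover $\tilde C$, and your own monodromy computation, $K_{2}\mapsto K_{2}-T\,K_{1}$ with $T>0$ the period of the core leaf, says precisely that $K_{2}$ does \emph{not} descend to $C$; calling the $\tilde C$-statement ``the asserted global extension'' silently weakens the proposition at the one point where single-valuedness matters. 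Two further remarks. First, identifying the continuation monodromy with $(\Phi^{K_{1}}_{T})_{*}$ is not justified: Nomizu--Amores continuation of a Killing germ along a path is not pushforward by a flow; the monodromy is $\mathrm{Ad}(\rho(\gamma))$ for the holonomy $\rho$ of the $Aff(\RR)$-structure of Proposition~\ref{cadre}(i), and a priori it has the form $K_{1}\mapsto\alpha K_{1}$, $K_{2}\mapsto K_{2}+\beta K_{1}$. Second, the paper pins down $\alpha=1$ not by a dynamical computation but by the global single-valuedness of the function $vol(K_{1},K_{2})$ on $M\setminus S$ (Propositions~\ref{local volume} and~\ref{cadre}), and then argues that the decomposition of $K_{2}$ into a part tangent to $\mathcal{F}_{1}$ and a part in a globally defined transverse line field is monodromy-invariant, whence $K_{2}$ descends as well. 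As written, your proposal must either locate the error in the claim $\beta=-T\neq 0$ or confront the fact that it contradicts the statement being proved; it currently does neither.
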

  
  \begin{proof}  By proposition~\ref{extension}, the leafs of $\mathcal{F}_{1}$ are circles.
  
   (i)  The flow of the transverse vector field $\tilde K_{2}$ (see the proof of proposition~\ref{cadre}) acts transitively on each connected component of the space of leafs of $\mathcal{F}_{1}$. Let  $\rbrack a, b \lbrack$ be the maximal domain of definition of a integral curve of $\tilde K_{2}$. Then the corresponding  connected component of $M \setminus S$ is diffeomorphic to  $\rbrack a, b \lbrack  \times S^1$ (we will se further that the maximal domain of definition is $\RR$).
 
 (ii)  Choose a local determination of $K_{1}$ and extend it along a leaf  of $\mathcal{F}_{1}$.   Our local determination  changes  into
  $\alpha K_{1}$, with $\alpha \in \RR$. Also extending $K_{2}$ along the same leaf, we will  find $K_{2}+ \beta K_{1}$,
   with $\beta \in \RR$.  We get then two  multivalued vector fields defined on a small cylinder containing the chosen leaf.
   
  But  $vol(\alpha K_{1}, K_{2}+ \beta K_{1})=vol(K_{1},  K_{2} )$ is a nonzero constant   function on  our leaf (for it is locally constant by proposition~\ref{local volume}). This implies $\alpha=1$, so $ K_{1}$ is globally defined 
  (and univalued) on the small cylinder containing the leaf. Since the fundamental group of the corresponding connected component of $M \setminus S$ is generated by our leaf (as a simple closed curve), $K_{1}$ extends in a global Killing field on all of the corresponding
  connected component of $M \setminus S$. 
  
   This also proves that the projection of $K_{2}$ on $T \mathcal{F}_{1}$ extends in  a global vector field (which is not Killing but preserves each leaf of $\mathcal{F}_{1}$ and its
   translation structure: its local expression is $f K_{1}$, with $f$ a function defined  on the space of leafs). Since the kernel of this projection is also well defined globally, $K_{2}$ extends
   on the corresponding connected component of $M \setminus S$ as well.
   \end{proof}

We have now  that $X$ and $Y$ extend to  globally defined vector fields on any connected component of $M \setminus S$. We have seen  that the flow of $X$ 
 parametrized the space of leafs of $\mathcal{F}_{1}$. In particular, the positive and the negative orbit of any point in $M \setminus S$ acccumulate  on $S$.
 
  A  contradiction is obtained in the  following;

  \begin{proposition}   \label{negative orbits} The negative orbit  of  points  in $M \setminus S$ under the flow of $X$  (equivalently the positive orbit under the flow of $-X$) cannot accumulate on $S$.
\end{proposition}
  
  \begin{proof}

      Recall that the Lie bracket relation implies that the flow of $X$ contracts $Y$ exponentialy. Thus the action of the negative flow of $X$ expands $Y$. Therefore,  a negative orbit under the flow of $X$ of a point $m$  in $M \setminus S$ never accumulates on $S$ (for $Y$ vanishes on $S$). 
      \end{proof}

 Another way to get a contradiction is to prove the following:
 
 \begin{proposition} \label{both complete} $X$ and $Y$ are complete on $M \setminus S$.
 \end{proposition}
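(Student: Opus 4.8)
The plan is to prove completeness of $Y$ and of $X$ separately, working on a single connected component $C$ of $M\setminus S$, where by Proposition~\ref{both extensions} both fields are globally (and single-valuedly) defined and $C$ is a cylinder $\RR\times S^1$. For $Y=K_1$ the argument will be soft and I would dispose of it first: $Y$ is tangent to the foliation $\mathcal{F}_1$, it does not vanish on $M\setminus S$, and by Proposition~\ref{cadre}(iv) its flow preserves each leaf of $\mathcal{F}_1$. Since the leaves are circles (this is exactly the content used in the proof of Proposition~\ref{both extensions}), every orbit of $Y$ is contained in a compact one-dimensional manifold, on which a nonvanishing vector field is automatically complete. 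Hence $Y$ is complete, and the whole difficulty is concentrated in $X=K_2$.

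For $X$ I would exploit the first integral $h:=vol(K_1,K_2)$, which is defined on all of $M$ and vanishes exactly on $S$. The key computation is the evolution of $h$ along the flow of $X$: using that the Killing fields preserve the parallel volume form (so $L_{K_i}vol=0$) together with the bracket relation $[K_1,K_2]=K_1$, a one-line Leibniz computation gives
\[
L_X h=(L_{K_2}vol)(K_1,K_2)+vol([K_2,K_1],K_2)=-vol(K_1,K_2)=-h ,
\]
so that $h(\Psi^t(m))=e^{-t}h(m)$ on the maximal interval of definition of the orbit, $\Psi^t$ denoting the flow of $X$. Completeness then follows from a standard escape-lemma argument. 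If some maximal orbit $\gamma$ of $X$ were defined only up to a finite time $T$, the escape lemma would force $\gamma(t)$ to leave every compact subset of $C$ as $t\to T$; since $M$ is compact and $\overline{C}\setminus C\subseteq S$, the orbit would have to accumulate on $S$, whence $h(\gamma(t_n))\to 0$ along some sequence $t_n\to T$. This contradicts $h(\gamma(t_n))=e^{-t_n}h(\gamma(0))\to e^{-T}h(\gamma(0))\neq 0$. The same reasoning applied to a finite negative endpoint (compatibly with Proposition~\ref{negative orbits}) rules out backward incompleteness, so $X$ is complete.

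The main obstacle is precisely the control of $X$ near $S$; everything reduces to showing that the exponential behaviour $h=e^{-t}h(0)$ of the first integral is genuinely incompatible with reaching $S$ in finite time. The two ingredients that make this work are the extension of $h$ across $S$ with $h|_S=0$ (so that accumulation on $S$ really does force $h\to 0$) and the compactness of $M$ together with the compactness of the leaves (so that a finite-time escape from the open cylinder $C$ can only be an escape onto $S$, there being no room for a finite-time blow-up along the circle leaves of $\mathcal{F}_1$). I would take care to phrase the escape step uniformly for the forward and backward directions, since the single identity $h(\Psi^t(m))=e^{-t}h(m)$ settles both: a finite endpoint of either sign yields a finite nonzero limit of $h$ along the orbit, which cannot be reconciled with $h\to 0$ on $S$.
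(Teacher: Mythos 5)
Your proof is correct, and for the vector field $X=K_2$ it takes a genuinely different (and more unified) route than the paper. The treatment of $Y$ is identical: orbits lie in the compact circle leaves of $\mathcal{F}_1$, hence are complete. For $X$, the paper argues asymmetrically: negative orbits are complete because, by Proposition~\ref{negative orbits}, they stay in a compact subset of $M\setminus S$ (the backward flow expands $Y$, which vanishes on $S$), while positive orbits are complete because $X$ extends as a Killing field to an open neighborhood of $S$ and the orbit then stays in a compact subset of that larger domain. You instead run a single first-integral argument in both time directions: the Leibniz computation $L_{K_2}\bigl(vol(K_1,K_2)\bigr)=vol([K_2,K_1],K_2)=-vol(K_1,K_2)$ is valid (the Killing fields preserve the parallel volume form of the unimodular connection), so $h=vol(K_1,K_2)$ evolves as $e^{-t}h$, and a finite-time escape --- which by compactness of $M$ and $\overline{C}\setminus C\subseteq S$ can only be an accumulation onto $S$, where $h$ extends continuously by $0$ (Proposition~\ref{extension}) --- would force $h\to 0$ while the exponential law gives a nonzero finite limit. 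Your version is self-contained and symmetric in the two time directions, at the price of not recording the stronger dynamical fact the paper extracts (negative orbits of $X$ never accumulate on $S$, even in infinite time), which is what the paper uses in Proposition~\ref{negative orbits} to obtain its first contradiction; note that your estimate is consistent with, and does not replace, that statement, since $h\to 0$ as $t\to+\infty$ permits infinite-time accumulation on $S$.
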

 
 \begin{proof}  By proposition~\ref{negative orbits}, the negative orbits of $X$ are complete since they stay in a compact subset of $M \setminus S$. The positive orbits of
 $X$ are complete because they stay in a compact subset of the maximal domain of definition of $X$ (for $X$ can be extended to an open neighborhood of $S$ as in the proof
 of point  (ii) in proposition~\ref{both extensions}).
 
 As for $Y$, its orbits lie  in the (compact) leafs of $\mathcal{F}_{1}$, so they are complete.
 \end{proof}

 Proposition~\ref{both complete} implies then that there is a   locally free transitive action of $Aff(\RR)$ on each connected component of $M \setminus S$. Consequently, each
 connected component of  $M \setminus S$ is diffeomorphic to a homogeneous space $Aff(\RR) / \Gamma$, where $\Gamma$ is a discrete subgroup of $Aff(\RR)$. 
 
 Since the $Aff(\RR)$-action preserves the finite volume of $M \setminus S$ given by $vol$, $\Gamma$ has to be a lattice in $Aff(\RR)$: a contradiction. \\

  {\bf Case II: semi-simple isotropy}\\
  
  We assume now that $Y=K_{2}$ and $X=K_{1}$.\\

 {\it We will show that the Killing algebra $\mathfrak{g}$ preserves two transverse foliations by lines $\mathcal{F}_{1}$ and $\mathcal{F}_{2}$ on $M$, with $\mathcal{F}_{2}$
 geodesic.}\\
 
 {\bf Step 1: Construction of $\mathcal{F}_{1}$}.   We prove that the  foliation $\mathcal{F}_{1}$ constructed in proposition \ref{cadre} extends to  all of $M$. 
 
 Assume first that $S$ is connected. In the neighborhood of $s \in S$  the local foliation spaned by the nonsingular vector field $K_{1}$ agrees with $\mathcal{F}_{1}$
 on $M \setminus S$. Since this stands  in the neigborhood of each point of $S$, the  foliation $\mathcal{F}_{1}$  extends to  a nonsingular foliation on $M$ by adding the  leaf $S$.   
      
   If  $S$ admits several connected components, the previous argument applies  in the neighborhood of each component of $S$.
   
   Notice that Poincar\'e-Hopf's theorem shows that $M$ is a torus.\\

  {\bf Step 2: Construction of $\mathcal {F}_{2}$.}  Since the action of $\mathcal{I}$ on $T_{s}M$ is unimodular  (for it preserves $vol(s)$) and contracts the direction 
  $T_{s}S$ (for $\lbrack Y, X \rbrack=-X$), this action (preserves and) expands a  unique line $\mathcal {F}_{2}(s) \in T_{s}M$. This constructs a smooth $\mathfrak{g}$-invariant  line field along $S$: the unique line field (preserved and) expanded by the local isotropy.
  
  In the neighborhood of $s$, consider the unique geodesic foliation $\mathcal {F}_{2}$ which extends the previous line field. It is the image of  the line field through the exponential map, with respect to $\nabla$,  along $S$. Since $\mathcal{F}_{2}$ is $\mathfrak{g}$-invariant, it extends to a  foliation (which we still denote by $\mathcal{F}_{2}$) on $M$  transverse to $\mathcal {F}_{1}$. We obtain then:
  
  \begin{proposition} \label{a Lorentz metric}$M$ admits a flat $\mathfrak{g}$-invariant lorentz metric.
  \end{proposition}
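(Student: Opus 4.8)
The plan is to manufacture the metric directly from the two transverse $\mathfrak{g}$-invariant foliations $\mathcal{F}_{1}$, $\mathcal{F}_{2}$ together with the parallel volume form $vol$ of the unimodular connection. The foliations are everywhere transverse: on $M\setminus S$ because the $\mathfrak{g}$-action is simply transitive, and along $S$ because $\mathcal{F}_{2}(s)$ is the line expanded by the isotropy, which is distinct from $T_{s}S=T_{s}\mathcal{F}_{1}$. Hence $TM=T\mathcal{F}_{1}\oplus T\mathcal{F}_{2}$ on all of $M$, and for $v=v_{1}+v_{2}$, $w=w_{1}+w_{2}$ decomposed along this splitting I would set
\[
g(v,w)=vol(v_{1},w_{2})+vol(w_{1},v_{2}).
\]
This symmetric form makes both $\mathcal{F}_{1}$ and $\mathcal{F}_{2}$ isotropic, and it is nondegenerate because $vol$ pairs the two transverse lines nontrivially; thus $g$ is Lorentzian. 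As $vol$, $\mathcal{F}_{1}$ and $\mathcal{F}_{2}$ are all globally defined and preserved by the Killing algebra, $g$ is a globally defined $\mathfrak{g}$-invariant Lorentz metric on $M$ whose two null foliations are precisely $\mathcal{F}_{1}$ and $\mathcal{F}_{2}$. Note that this construction does not use the geodesic property of $\mathcal{F}_{2}$.

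To prove flatness I would restrict to the open dense set $M\setminus S$, where $\mathfrak{g}\cong\mathrm{Lie}(Aff(\RR))$ acts simply transitively, so that $g$ is there a left-invariant Lorentz metric on $Aff(\RR)$. Let $\tilde f_{1}$, $\tilde f_{2}$ be the left-invariant fields spanning $\mathcal{F}_{1}$, $\mathcal{F}_{2}$. Since $\mathcal{F}_{1}$ is the direction of the derived ideal $[\mathfrak{g},\mathfrak{g}]=\RR K_{1}$, one has $[\tilde f_{1},\tilde f_{2}]=\tilde f_{1}$, while $g(\tilde f_{1},\tilde f_{1})=g(\tilde f_{2},\tilde f_{2})=0$ and $g(\tilde f_{1},\tilde f_{2})=\lambda$ is a nonzero constant (both $vol$ and the $\tilde f_{i}$ being left-invariant). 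The Koszul formula for a left-invariant metric then yields, by a direct computation,
\[
\nabla^{g}_{\tilde f_{1}}\tilde f_{1}=0,\quad \nabla^{g}_{\tilde f_{1}}\tilde f_{2}=0,\quad \nabla^{g}_{\tilde f_{2}}\tilde f_{1}=-\tilde f_{1},\quad \nabla^{g}_{\tilde f_{2}}\tilde f_{2}=\tilde f_{2},
\]
where $\nabla^{g}$ denotes the Levi-Civita connection of $g$.

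Feeding these into $R(\tilde f_{1},\tilde f_{2})=[\nabla^{g}_{\tilde f_{1}},\nabla^{g}_{\tilde f_{2}}]-\nabla^{g}_{[\tilde f_{1},\tilde f_{2}]}$ and using $[\tilde f_{1},\tilde f_{2}]=\tilde f_{1}$, one checks at once that $R(\tilde f_{1},\tilde f_{2})\tilde f_{1}=0$ and $R(\tilde f_{1},\tilde f_{2})\tilde f_{2}=0$, so the curvature of $g$ vanishes identically on $M\setminus S$. As a sanity check, in coordinates $(a,b)$ with $a>0$ on $Aff(\RR)$ this metric is a multiple of $a^{-2}\,da\,db$, which is manifestly flat and Lorentzian. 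Since $g$ is smooth on all of $M$ and $M\setminus S$ is dense, its curvature vanishes everywhere and $g$ is flat. The one delicate point is not the curvature computation but the passage across $S$: it relies on knowing that $\mathcal{F}_{1}$ and $\mathcal{F}_{2}$ extend to genuine transverse foliations on all of $M$, which is exactly what the construction of $\mathcal{F}_{1}$ in Step~1 and of $\mathcal{F}_{2}$ in Step~2 supplies; granting this, $g$ is automatically smooth and nondegenerate along $S$, and flatness propagates there by continuity.
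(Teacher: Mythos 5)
Your construction of the metric is the paper's own, just written in polarized form: the paper sets $q(v)=vol(v_{1},v_{2})$ for the splitting $T_{m}M=T\mathcal{F}_{1}(m)\oplus T\mathcal{F}_{2}(m)$, which is exactly your $g$ up to the usual factor, and both arguments rely in the same way on Steps 1 and 2 to guarantee that the two foliations are globally defined, smooth and transverse across $S$. Where you genuinely diverge is the proof of flatness. The paper argues softly: $q$ is locally homogeneous, hence of constant sectional curvature, and since $M$ is a torus (Poincar\'e--Hopf, end of Step 1) the Lorentzian Gauss--Bonnet theorem forces that constant to be zero. You instead compute the Levi-Civita connection of the left-invariant null-frame metric on $Aff(\RR)$ by the Koszul formula and check $R(\tilde f_{1},\tilde f_{2})=0$ directly; I verified the covariant derivatives $\nabla^{g}_{\tilde f_{1}}\tilde f_{1}=\nabla^{g}_{\tilde f_{1}}\tilde f_{2}=0$, $\nabla^{g}_{\tilde f_{2}}\tilde f_{1}=-\tilde f_{1}$, $\nabla^{g}_{\tilde f_{2}}\tilde f_{2}=\tilde f_{2}$ and the vanishing of the curvature, and the coordinate model $a^{-2}\,da\,db$ is indeed the metric in question. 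Your route costs a short computation but buys independence from Gauss--Bonnet and from the genus of $M$ (you only use that $M\setminus S$ is dense and that the homogeneous model is $Aff(\RR)$), and it exhibits the flat model explicitly; the paper's route is shorter and recycles topological facts already established. Both are correct; just make sure you state explicitly that $g(\tilde f_{1},\tilde f_{2})$ is constant because it is a function invariant under the simply transitive local action of the Killing algebra, which is the one point where local homogeneity on $M\setminus S$ enters your computation.
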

  
  \begin{proof} Let $v \in T_{m}M$ and consider $q(v)=vol(v_{1} ,v_{2}),$ where $v_{1},v_{2}$ are the components of $v$ under the splitting $$T_{m}M=T\mathcal{F}_{1}(m)\oplus T\mathcal {F}_{2}(m).$$ This constructs a $\mathfrak{g}$-invariant  lorentz metric on $M$. In particular, $q$ is locally homogeneous and hence of (lorentzian) constant  sectional curvature~\cite{Wolf}.
  
  Since $M$ is a torus, Gauss-Bonnet's theorem (see its Lorentzian  version in~\cite{Wolf})  implies that $q$ is flat (locally isomorphic to  $\RR^2$ endowed with $dxdy$)~\cite{Wolf}. 
  \end{proof}
  
  {\bf Step 3: Construction of a $\mathfrak{g}$-invariant  vector field $T$ on $M$ which vanishes exactly on $S$.}

  Recall that  the local vector field $Y$ generates the isotropy in  the neighborhood of $s\in S$.
  Let $T$ be  the projection of $Y$ on the second factor of the decomposition $$TM=T\mathcal {F}_{1} \oplus T\mathcal {F}_{2}.$$ Then
  $T$ is obviously $Y$-invariant. Since $X$ spans $\mathcal F_{1}$ and $X$ and $Y$ commutes modulo $X$, it follows that $T$ is also $X$-invariant. The vector field $T$ is $\mathfrak{g}$-invariant and vanishes on $S$. As before, $T$ is defined locally in the neighborhood of $s \in S$, but since $T$ is $\mathfrak{g}$-invariant, it extends to a global vector field on $M$.
  
 Note that   $\mathfrak{g}$ being  transitive on $M \setminus S$, the vector field  $T$ doesn't vanish on $M \setminus S$. We prove now:
  
  \begin{proposition} The flow of  $T$ preserves the volume form of the unimodular connection (and also the volume form of the flat lorentz metric $q$).
   \end{proposition}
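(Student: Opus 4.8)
The plan is to prove that $L_{T}vol=0$ (and the analogous statement for $q$) by setting the local homogeneity on $M\setminus S$ against the compactness of $M$. The mechanism is that the $vol$-divergence of $T$, defined by $L_{T}vol=div(T)\cdot vol$, is a $\mathfrak{g}$-invariant function. Indeed, $T$ is $\mathfrak{g}$-invariant, so $[K,T]=0$ for every $K\in\mathfrak{g}$, and $vol$ is the parallel volume of $\nabla$, hence annihilated by the Killing algebra, $L_{K}vol=0$. Therefore, for each $K\in\mathfrak{g}$ the Leibniz rule for the Lie derivative gives $L_{K}(L_{T}vol)=L_{[K,T]}vol+L_{T}(L_{K}vol)=0$, which means exactly that $L_{K}(div(T))=0$.

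First I would exploit transitivity. On $M\setminus S$ the Killing algebra acts simply transitively (the isotropy is nontrivial only along $S$), so a $\mathfrak{g}$-invariant function is locally constant there. Moreover, by local homogeneity every point of $M\setminus S$ admits a neighbourhood isomorphic to the same local model, so the locally constant value of $div(T)$ is one and the same constant $c$ on each connected component of $M\setminus S$; since $T$ extends smoothly across $S$ (where it vanishes), $div(T)\equiv c$ on all of $M$ by continuity.

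Next I would bring in the global topology of the closed surface $M$. Because $T$ is a globally defined vector field and $vol$ a volume form, the $1$-form $\iota_{T}vol$ is globally defined and, $vol$ being of top degree, $L_{T}vol=d(\iota_{T}vol)$ is exact. Stokes' theorem then yields $\int_{M}div(T)\,vol=\int_{M}L_{T}vol=\int_{M}d(\iota_{T}vol)=0$. As $div(T)\equiv c$ and $\int_{M}vol\neq 0$, this forces $c=0$, i.e. $L_{T}vol=0$, so the flow of $T$ preserves $vol$. For the Lorentz metric $q$, I would note that the splitting $TM=T\mathcal{F}_{1}\oplus T\mathcal{F}_{2}$ is precisely the null decomposition of $q$; both $vol$ and the Riemannian area form of $q$ are $\mathfrak{g}$-invariant volume forms on $M$, and on the homogeneous set $M\setminus S$ such a form is unique up to a multiplicative constant. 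Hence the area form of $q$ is a constant multiple of $vol$, and $L_{T}$ annihilates it as well.

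The step I expect to be most delicate is pinning $div(T)$ down to a \emph{single} constant: if $M\setminus S$ has several connected components, Stokes only produces a weighted sum $\sum_{i}c_{i}\,vol(U_{i})=0$ of the component-constants, so I would first use local homogeneity to argue that the constant coming from the common local model is the same on every component, and only then apply the integral obstruction. The remaining routine points—smoothness of $T$ up to and along $S$, and the constancy of the proportionality factor between $vol$ and the area form of $q$—are exactly what make $\iota_{T}vol$ a bona fide global $1$-form and allow the argument for $vol$ to transfer verbatim to $q$.
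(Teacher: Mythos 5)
Your proof is correct and follows essentially the same route as the paper: $div(T)$ is $\mathfrak{g}$-invariant, hence locally constant on $M\setminus S$, a global finite-volume/integration argument forces it to vanish, and the Lorentz area form is handled as a constant multiple of $vol$. The only difference is cosmetic: the paper applies the finite-volume argument to each connected component of $M\setminus S$ separately (the flow of $T$ preserves each component, since $T$ vanishes on $S$), which sidesteps the ``single constant'' issue you flag as delicate, whereas your global Stokes version needs that point, which in any case follows from continuity of $div(T)$ across $S$.
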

  
  \begin{proof}  Since both $T$ and the volume form $vol$ are $\mathfrak{g}$-invariant, the divergence $div(T)$ is also $\mathfrak{g}$-invariant. Consequently, $div(T)$  is constant on each connected component of $M \setminus S$. The time $t$ of the $T$-flow acts then on $vol$ by multiplication with $exp(\lambda t)$, where $div(T)=\lambda  \in \RR$.
 
 But this $T$-action has to preserve the global (finite)  volume of each connected component of $M \setminus S$ . This implies $\lambda=0$ and, consequently,  $vol$ is $T$-invariant.
 
 Remark that the underlying  volume form of the lorentz metric being also $\mathfrak{g}$-invariant, it is a constant multiple of $vol$. Consequently, it is also $T$-invariant.
  \end{proof}

  We will get a contradiction by showing the following 
  
  \begin{proposition} The divergence of $T$ with  respect to the volume form of the flat lorentz metric  equals $1$.
  \end{proposition}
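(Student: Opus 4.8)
The plan is to reduce the computation to the explicit model of an isometry of the flat Lorentzian plane. The key observation is that, although $T$ is manufactured from the isotropy generator $Y$ of the affine connection $\nabla$, the field $Y$ lies in $\mathfrak{g}$ and the Lorentz metric $q$ constructed in Proposition~\ref{a Lorentz metric} is $\mathfrak{g}$-invariant; hence $Y$ is \emph{automatically} a Killing field of $q$. This lets me compute $\mathrm{div}(T)$ entirely inside the flat Lorentzian geometry, where every object becomes linear.

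First I would fix flat coordinates $(x,y)$ for $q$ in a neighborhood of $s$, so that $q=dx\,dy$ and its volume form is $dx\wedge dy$ (up to a nonzero constant, which does not affect the divergence). Since $q(v)=vol(v_{1},v_{2})$ vanishes exactly when $v$ is tangent to $\mathcal{F}_{1}$ or to $\mathcal{F}_{2}$, these two foliations are precisely the null foliations of $q$; in flat coordinates the null directions are the coordinate lines, so I may arrange $T\mathcal{F}_{1}=\RR\partial_{x}$ and $T\mathcal{F}_{2}=\RR\partial_{y}$, with $s$ placed at the origin.

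Next I would pin down $Y$. A Killing field of $dx\,dy$ is a combination of the null translations $\partial_{x},\partial_{y}$ and the boost $x\partial_{x}-y\partial_{y}$; those fixing the origin are exactly the multiples of the boost. Because $Y$ preserves $vol$, its linearization at $s$ is traceless (it lies in $sl(2,\RR)$), and by the construction in Step~2 the isotropy contracts $T_{s}S=\mathcal{F}_{1}(s)$ and expands $\mathcal{F}_{2}(s)$; the eigenvalues are therefore $-1$ on $\partial_{x}$ and $+1$ on $\partial_{y}$. Hence $Y=-x\partial_{x}+y\partial_{y}$. Projecting onto $T\mathcal{F}_{2}=\RR\partial_{y}$ gives $T=y\partial_{y}$, and the divergence with respect to $dx\wedge dy$ is $\mathrm{div}(T)=\partial_{y}(y)=1$.

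This yields the contradiction sought for Case~II. The preceding proposition shows that the $\mathfrak{g}$-invariant field $T$ preserves the Lorentz volume (a constant multiple of $vol$), so $\mathrm{div}(T)=0$ on $M\setminus S$; since $\mathrm{div}(T)$ is $\mathfrak{g}$-invariant, hence constant on the open orbit, this value must agree with the $1$ read off near $s$, which is absurd. Thus the hypothesis $S\neq\emptyset$ is untenable, $S$ is empty in Case~II as well, and $\nabla$ is locally homogeneous on all of $M$. The only genuinely delicate step is the bookkeeping of the previous paragraph: one must check that the null foliations of the $\mathfrak{g}$-invariant flat metric really coincide with $\mathcal{F}_{1},\mathcal{F}_{2}$ and that the sign of the boost is fixed by the expansion/contraction behaviour recorded in Step~2; once this is granted, the value $\mathrm{div}(T)=1$ is immediate.
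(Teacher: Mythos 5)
Your proposal is correct and follows essentially the same route as the paper: pass to flat (exponential) coordinates for $q$ at $s$, identify $\mathcal{F}_{1},\mathcal{F}_{2}$ with the null coordinate directions, recognize $Y$ as the boost $-x\partial_{x}+y\partial_{y}$ (the normalization $\pm1$ of the eigenvalues being fixed by the bracket relation $[Y,X]=-X$ from Step 2, though for the final contradiction any nonzero divergence would do), and read off $T=y\partial_{y}$ with divergence $1$. The extra justifications you supply (that $Y$ is automatically a Killing field of $q$, and the bookkeeping of the null foliations) are correct and merely make explicit what the paper leaves implicit.
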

  
  \begin{proof} Choose a point $s$   in $S$. In exponential coordinates $(x,y)$, at $s$, with respect to the flat lorentz metric $q$, the foliations $\mathcal F_{1}$ and $\mathcal F_{2}$
  are generated by the isotropic directions $\frac{\partial}{\partial x}$ and $\frac{\partial}{ \partial y}$ of the standard lorentz metric $dxdy$.

   The isotropy $\mathcal I$ at $s$ is   generated by the linear vector field $Y=-x \frac{\partial}{\partial x} + y \frac{\partial}{ \partial y}$.

 It follows, by construction,  that $T$ equals $y \frac{\partial}{ \partial y}$. The divergence of $T$ with respect to $dx \wedge dy$ equals $1$. 
  \end{proof}
  
  \section{Locally homogeneous affine connections} \label{section 5}
  
    Recall that T. Nagano and K. Yagi completely classified torsion free flat affine connections (affine structures)  on the real two torus~\cite{Nagano}. They proved that, except a well described family of incomplete affine structures (see the  nice description given in~\cite{Benoist1}), all the other are homogeneous constructed from  faithful affine actions of $\RR^2$ on the affine
 plane $GL(2, \RR) \ltimes \RR^2 /  GL(2, \RR)$ admitting an open orbit. This induces a translation invariant affine structure on  $\RR^2$. The quotient
 of $\RR^2$ by  a lattice is a homogeneous affine two dimensional tori. 
 
 Notice that the previous homogeneous affine structures are complete if and only if the corresponding  $\RR^2$-actions on the affine plane are transitive.

 In particular, Nagano-Yagi's result prove  that a real two dimensional torus
 locally modelled on the affine space $GL(2, \RR) \ltimes \RR^2 /  GL(2, \RR)$ such that  the linear part of the holonomy morphism lies  in $SL(2, \RR)$ is always complete
 and homogeneous (the corresponding torsion free flat affine connection on the torus is translation invariant). 
 
 Here we prove:
 
 \begin{theorem}  \label{complete and homogeneous} Let $M$ be a compact surface locally modelled on a homogeneous space $G/I$ such that $G/I$ admits a $G$-invariant   affine connection $\nabla$
and a volume form. Then:
 
 (i)   the corresponding affine connection on $M$  is homogeneous (up to a finite cover), except if $\nabla$ is the Levi-Civita  connection of a hyperbolic metric 
 (and the genus of $M$ is $\geq 2$).
 
 (ii) the corresponding affine connection on $M$ is complete.
 \end{theorem}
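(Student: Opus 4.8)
The plan is to read off the global structure of $M$ from the nature of the isotropy of the model $G/I$, keeping in mind that, as defined in Section~\ref{section3}, ``complete'' means that the developing map $\tilde M \to G/I$ is a diffeomorphism (rather than geodesic completeness, which for a general unimodular affine connection may genuinely fail). Since $G/I$ carries both a $G$-invariant connection and a $G$-invariant volume form, the linear isotropy representation preserves a volume on $T_{eI}(G/I)\cong\RR^2$ and is determined by its action on the jets of $\nabla$ in exponential coordinates, exactly as in the proof of Lemma~\ref{unimodularlemma}; hence it embeds into $SL(2,\RR)$, so $\dim\mathfrak{I}\le 3$, and by Lemma~\ref{unimodularlemma} it cannot equal $2$. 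Equivalently, by Theorem~\ref{imprimitive connections}, $\nabla$ falls into one of three kinds: of Riemannian type (compact isotropy), torsion free and flat, or modelled on an imprimitive homogeneous space. I would treat these in turn.

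\emph{Compact isotropy.} Here $G/I$ carries a $G$-invariant Riemannian metric $g$, and Theorem~\ref{isotropie compacte} identifies $M$ with a quotient $\Gamma\backslash(G/I)$ by a lattice; in particular the developing map is a diffeomorphism, which is assertion (ii). For (i), $g$ is locally homogeneous, hence of constant curvature, and $G/I$ is the round sphere, the Euclidean plane, or the hyperbolic plane. When the isotropy is one dimensional it is a circle and $G/I$ is a symmetric space; a short computation, via Nomizu's description of the invariant connections on a reductive homogeneous space, shows that the only $G$-invariant connection is the Levi-Civita connection of $g$. In the spherical and Euclidean cases $M$ is, up to a double cover, the sphere or a flat torus, both homogeneous; the hyperbolic case is exactly the stated exception, with $M$ of genus $\ge 2$. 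When the isotropy is trivial, $G$ is two dimensional and unimodular, hence $G=\RR^2$ (as $Aff(\RR)$ admits no lattice), and $M$ is again a homogeneous torus.

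\emph{Flat and torsion free.} Then $M$ is a torus, by Milnor's theorem, and is locally modelled on the affine plane $GL(2,\RR)\ltimes\RR^2/GL(2,\RR)$. The invariant volume form forces the linear part of the holonomy morphism to lie in $SL(2,\RR)$, so the Nagano-Yagi classification recalled at the beginning of this section applies directly and yields that the affine structure on $M$ is complete and translation invariant, hence homogeneous.

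\emph{Imprimitive, and the main difficulty.} By Proposition~\ref{imprimitive}, $M$ is again a torus, and this is the case I expect to require the most care: geodesic completeness can fail, so completeness in the developing-map sense must be established by a global argument. The imprimitivity yields a $G$-invariant line foliation on $G/I$, descending to $M$; along the lines of Proposition~\ref{cadre}, unimodularity equips it with a transverse structure together with a volume-type first integral that is constant along the leaves. The strategy is to run a divergence/Stokes argument of the type used in the proof of Theorem~\ref{projective}: compactness of $M$ forces the divergence of the relevant invariant vector fields to vanish, the leaves are compact with a complete transverse flow, and the leaf space is Hausdorff, from which one concludes that the developing map is a covering onto the simply connected $G/I$, hence a diffeomorphism, giving (ii). A transitive action of the model group, after passing to a finite cover to absorb the holonomy, then gives the homogeneity in (i). The hard point is precisely this completeness step in the non-flat imprimitive case, where one must use the invariant volume and the transverse geometry of the foliation, rather than any metric completeness, to rule out escape of the developing image.
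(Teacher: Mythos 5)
Your reduction to three cases and your handling of the first two are essentially the paper's: compact isotropy is dispatched by Theorem~\ref{isotropie compacte} plus constant curvature (with the hyperbolic exception), and the flat torsion-free case by Milnor, the $SL(2,\RR)$-linear holonomy, and Nagano--Yagi. The genuine gap is the third case, which you correctly identify as the crux but then only describe as a ``strategy.'' The paper first pins down that the noncompact isotropy is a one-parameter subgroup acting on $T_o(G/I)$ either semi-simply or unipotently, and the two subcases need different arguments, neither of which is the divergence/Stokes mechanism you propose. In the semi-simple subcase the two invariant line fields together with the invariant volume give a $G$-invariant Lorentz metric, flat by Gauss--Bonnet on the torus, so $G$ is the $SOL$ group and one concludes by the Lorentzian Bieberbach theorem of Carri\`ere--Dalbo. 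In the unipotent subcase the invariant data only provide one direction of symmetry (the central field $\tilde X$ and the closed form $\tilde\omega$); the missing idea is how to produce a \emph{second} global Killing field on $M$ transverse to the foliation generated by $X$. The paper extracts it from the holonomy: $\Gamma=\rho(\pi_1(M))$ is infinite abelian, the Lie algebra of its Zariski closure $\overline{\Gamma}$ consists of $\Gamma$-invariant Killing fields of $G/I$ which therefore descend to $M$, and a topological argument using a primitive of $\tilde\omega$ on the simply connected $G/I$ shows $\overline{\Gamma}\not\subset H$, so one of these fields is transverse to $\mathcal{F}$. Homogeneity of $M$ follows, and only \emph{then} completeness, from the fact that $vol(\tilde X,\tilde K)$ is a nonzero constant on the closure of the open orbit of the abelian group generated by $\tilde X$ and $\tilde K$, forcing that orbit to be all of $G/I$.

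Your proposed order --- establish that the developing map is a diffeomorphism, then ``absorb the holonomy in a finite cover'' to get transitivity --- does not go through. The holonomy of a torus is typically an infinite group (e.g.\ $\ZZ^2$), so it cannot be removed by a finite cover; and a diffeomorphic developing map only gives $M\cong\Gamma\backslash(G/I)$, which is homogeneous only if the centralizer of $\Gamma$ in $G$ acts transitively --- precisely what the Zariski-closure argument is there to prove. Likewise, compactness of the leaves and Hausdorffness of the leaf space do not by themselves prevent the developing image from missing part of $G/I$; the paper's completeness mechanism is the persistence of linear independence of the two commuting Killing fields on the boundary of their open orbit, not a volume-preservation constraint on a single flow.
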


 \begin{remark} In particular, a unimodular  affine connection on the two torus   has constant Christoffel symbols with respect to global translation invariant
 coordinates.
 \end{remark}

 \begin{proof} Note first that, in the case where $I$ is compact, theorem~\ref{isotropie compacte} proves that $M$ is complete. Moreover, if $M$ is of genus $0$ then the underlying
locally homogeneous  Riemannian metric has to be  of positive sectional curvature (by Gauss-Bonnet's theorem) and $G/I$ coincides with the standard sphere $S^2$ seen as a homogeneous space of $S^3$.
Since $M$ is simply connected, the developing map gives 
a global isomorphism with $S^2$.

Also if the genus of $M$ is one, the underlying locally homogeneous metric on $M$ has to be flat. It follows that $G/I$ coincides with the only homogeneous flat Riemannian space
$O(2, \RR) \ltimes \RR^2 / O(2, \RR)$. Bieberbach's theorem (see, for instance,~\cite{Wolf})  implies then that $M$ is homogeneous (up to a finite cover).

Surfaces of genus $g \geq 2$ cannot admit a homogeneous geometric structure locally modelled on $G/I$, with $I$ compact. Indeed, here the Riemannian metric induced on $M$  is of  constant negative curvature. It is well  know that the isometry group of a hyperbolic metric on $M$ is finite (hence it cannot act transitively)~\cite{Kobayashi}.\\

 {\it Consider now the case where $I$ is  noncompact}.\\
 
  Denote also by $\nabla$ the locally homogeneous affine connection induced on $M$. The Killing algebra $\mathfrak{g}$ acts
 locally on $M$ preserving $\nabla$ and a volume form $vol$. The proof of theorem~\ref{affine connection} implies  that $\mathfrak{g}$ is of dimension at most $3$.
 Therefore $G$ is of dimension $3$ and $I$ is a  one parameter subgroup in  $G$. Since $I$ is supposed noncompact, its (faithful) isotropy action   on $T_{o}G/I$, where $o$ is the origin point of $G/I$, identifies $I$ either with a semi-simple, or with a unipotent one parameter  subgroup in $SL(2,\RR)$.\\
 
 {\bf Case I: semi-simple  isotropy}\\ 
 
 The isotropy action on $T_{o}G/I$ preserves two line fields. Consequently, there exists on $G/I$ two $G$-invariant line fields. Since $G/I$ admits also a $G$-invariant volume form,
 there exists on $G/I$ a $G$-invariant Lorentz metric $q$ (see the construction in proposition~\ref{a Lorentz metric}).
 
 By Poincar\'e-Hopf's theorem $M$ is a torus and $q$ has to be flat (by Gauss-Bonnet's theorem). It follows that $G$ is the automorphism group  $SOL$ of the flat Lorentz
 metric $dxdy$ on $\RR^2$. By the Lorentzian version of Bieberbach's theorem, this structure is known to be complete and homogeneous (up to a finite cover of $M$)~\cite{CD}.\\

 {\bf Case II: unipotent isotropy}\\
 
 Here the isotropy action preserves a vector field in $T_{o}G/I$. This yields a $G$-invariant vector field $\tilde X$ on $G/I$ and  a $G$-invariant one form $\tilde{\omega}=vol(\tilde X, \cdot)$.
 We then have:
 
 \begin{proposition} $M$ admits  a $\mathfrak{g}$-invariant vector field $X$ and a $\mathfrak{g}$-invariant closed one form $\omega$ vanishing on $X$. Thus the foliation $\mathcal{F}$
 generated by $X$ is transversally Riemannian.
 \end{proposition}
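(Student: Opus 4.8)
The plan is to obtain $X$ and $\omega$ on $M$ simply by pushing down the $G$-invariant objects $\tilde X$ and $\tilde\omega$ already produced on the model $G/I$, and then to establish the only analytic point, the closedness of $\omega$, by a global volume argument of the same kind used in the proof of Theorem~\ref{projective}.

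First I would use that $M$ is locally modelled on $G/I$ with transition maps that are restrictions of elements of $G$: any $G$-invariant tensor on $G/I$ is preserved by these transition maps and therefore glues to a globally defined tensor on $M$. Applied to $\tilde X$ and to $\tilde\omega = vol(\tilde X,\cdot)$, this yields a global vector field $X$ and a global $1$-form $\omega$ on $M$, each invariant under the local $G$-action and hence $\mathfrak{g}$-invariant. Because $\tilde X$ is the invariant field spanned by a nonzero isotropy-fixed vector of $T_oG/I$, it is nowhere vanishing on $G/I$, so $X$ is nowhere vanishing on $M$ (which, by Poincar\'e--Hopf, forces $M$ to be a torus). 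The relation $\omega(X) = vol(X,X) = 0$ is immediate from the antisymmetry of $vol$.

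The substantive step is the closedness of $\omega$, and this is where compactness must be used. Writing $\omega = \iota_X vol$ and using $d\,vol = 0$ (a top-degree form on the surface), I would compute $d\omega = L_X vol = \operatorname{div}(X)\cdot vol$. Since $X$ and $vol$ are both $\mathfrak{g}$-invariant and $\mathfrak{g}$ acts locally transitively, the function $\operatorname{div}(X)$ is $\mathfrak{g}$-invariant, hence a constant $\lambda$ on $M$. Stokes' theorem on the compact oriented surface $M$ then gives $0 = \int_M d\omega = \lambda\int_M vol$, and since $\int_M vol \neq 0$ we conclude $\lambda = 0$, so $d\omega = 0$. I expect this to be the main (indeed the only) obstacle, in the sense that it is the single place where the global, compact hypothesis is essential; note that a priori the divergence of $\tilde X$ on $G/I$ need not vanish (compare the homothety field of divergence $2$ appearing in the proof of Theorem~\ref{projective}), and it is precisely the existence of the compact $M$ that forces it to be $0$.

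Finally, since $\omega$ and $X$ are both nowhere zero and $\omega(X) = 0$, the kernel line field $\ker\omega$ coincides with $\RR X = T\mathcal{F}$, so $\mathcal{F}$ is exactly the foliation defined by the closed nonsingular $1$-form $\omega$. Moreover $L_X\omega = \iota_X d\omega + d(\omega(X)) = 0$, so $\omega$, and hence the transverse metric $\omega\otimes\omega$ on $TM/T\mathcal{F}$, is invariant along the leaves; this is the bundle-like condition, so $\mathcal{F}$ is transversally Riemannian in the sense of~\cite{Molino}, exactly as in Proposition~\ref{cadre}(iii). This gives the assertion.
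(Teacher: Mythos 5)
Your proof is correct and is essentially the paper's own argument: the paper also pushes $\tilde X$ and $\tilde\omega=vol(\tilde X,\cdot)$ down to $M$, observes that $d\tilde\omega$ and $vol$ are both $G$-invariant top-degree forms so that $d\tilde\omega=\lambda\, vol$ for a constant $\lambda$ (your identification $\lambda=\operatorname{div}(X)$ via Cartan's formula is just a more explicit way of saying this), and then kills $\lambda$ by Stokes' theorem on the compact surface. The only addition is your explicit verification of the transversally Riemannian conclusion, which the paper leaves implicit.
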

 
 \begin{proof} Since  $d \tilde{ \omega}$ and $vol$ are $G$-invariant, there exists $\lambda \in \RR$ such that $d \tilde{ \omega}= \lambda vol$.

 Denote  by $\omega$  the  one form on $M$ associated to $\omega$ and by $X$ the vector field induced on $M$ by  $\tilde X$. It follows that $\omega$ vanishes on $X$. Also
 $\int_{M} d \omega= \lambda vol(M)$, where the volume of $M$ is calculated with respect to the volume form induced by $vol$. Stokes' theorem yield $\lambda =0$ and, consequently,
 $\omega$ is closed.
 \end{proof}
 
 \begin{proposition} The normal  subgroup $H$ of $G$ which preserves each leaf of the foliation $\tilde{\mathcal{F}}$ generated by $\tilde X$ on $G/I$  is  two dimensional and abelian.
  \end{proposition}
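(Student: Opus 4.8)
The plan is to reduce the statement to a computation in the Lie algebra $\mathfrak{g}$ of $G$, exploiting the closedness of $\tilde{\omega}$ already established above. First I would fix the algebraic model. Write $\mathfrak{i}=\langle Y\rangle$ for the one dimensional isotropy algebra. Because the isotropy is unipotent, the induced action of $\mathrm{ad}(Y)$ on $\mathfrak{g}/\mathfrak{i}\cong T_o(G/I)$ is a nonzero nilpotent endomorphism; choosing a basis $\bar e_1,\bar e_2$ adapted to it, $\mathrm{ad}(Y)\bar e_1=0$ and $\mathrm{ad}(Y)\bar e_2=\bar e_1$. Lifting to $e_1,e_2\in\mathfrak{g}$ produces a basis $(Y,e_1,e_2)$ with bracket relations $[Y,e_1]=\alpha Y$, $[Y,e_2]=\beta Y+e_1$ and $[e_1,e_2]=pY+qe_1+re_2$ for suitable reals $\alpha,\beta,p,q,r$ (the vanishing of the $e_2$-component of $[Y,e_2]$ being guaranteed by unipotency). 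The invariant field $\tilde X$ corresponds to the unique $\mathrm{ad}(Y)$-fixed direction $\bar e_1$, so the leaf $L_o$ of $\tilde{\mathcal F}$ through the base point $o$ is tangent to $\bar e_1$, and its stabilizer in $G$ has two dimensional Lie algebra $\mathfrak{h}'=\langle Y,e_1\rangle$.

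Next I would identify $H$ with the kernel of the $G$-action on the (one dimensional) leaf space. Since $G$ acts transitively on $G/I$ and preserves $\tilde{\mathcal F}$, it acts transitively on the space of leaves, which is thus $G/\mathrm{Stab}(L_o)$; hence $H$ is the largest normal subgroup of $G$ contained in $\mathrm{Stab}(L_o)$, so that $\mathrm{Lie}(H)$ is the largest ideal of $\mathfrak{g}$ contained in $\mathfrak{h}'$. The point is then that $\mathfrak{h}'$ is itself an ideal: pulling $\tilde\omega$ back to $G$ yields a left invariant $1$-form whose value at the identity is a nonzero multiple of $e_2^*$, and the Maurer--Cartan formula shows that its differential is governed by the $e_2$-component of $[e_1,e_2]$, namely $r$. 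The closedness $d\tilde\omega=0$ obtained above (via Stokes on the compact $M$) is therefore exactly the relation $r=0$, and with $r=0$ one checks at once that $[\mathfrak{g},\mathfrak{h}']\subseteq\mathfrak{h}'$. Thus $\mathrm{Lie}(H)=\mathfrak{h}'$ is two dimensional and $\dim H=2$.

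It remains to show $H$ is abelian, i.e. $[Y,e_1]=\alpha Y=0$, and this is the step I expect to carry the real content. Here I would feed the structure constants into the Jacobi identity for the triple $(Y,e_1,e_2)$: a direct expansion shows that the $e_1$-component of the Jacobiator equals $\alpha-r$, so Jacobi forces $r=\alpha$. Combined with the closedness relation $r=0$ from the previous step, this gives $\alpha=0$, whence $[Y,e_1]=0$ and $\mathfrak{h}'=\langle Y,e_1\rangle$ is abelian. Therefore $H$ is a two dimensional abelian normal subgroup of $G$, as claimed. The only genuine subtlety is the interplay of the two inputs: neither the Jacobi identity alone nor the closedness alone yields abelianness, but together they pin down $\alpha=r=0$; I would be careful that the basis normalizations are used consistently so that the scalar $r$ appearing in the Maurer--Cartan computation and in the Jacobiator is literally the same structure constant.
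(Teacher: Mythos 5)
Your proof is correct, but it takes a genuinely different route from the paper's. The paper argues geometrically: $G$ preserves $\tilde{\mathcal F}$ together with the transverse Riemannian (translation) structure furnished by the closed form $\tilde\omega$, so $G/H$ acts on the one-dimensional leaf space by isometries and hence has dimension at most one; transitivity on the leaf space forces $\dim G/H=1$, whence $\dim H=2$. Abelianness is then read off dynamically: every element of $H$ preserves each leaf and the restriction of $\tilde X$ to it, hence acts on each leaf as a translation in the flow coordinate of $\tilde X$, and such maps commute; effectiveness of the $G$-action on $G/I$ upgrades leafwise commutation to commutation in $H$. You instead encode everything in structure constants: your identification of $H$ with the kernel of the action on the leaf space, the translation of $d\tilde\omega=0$ into the single relation $r=0$ via the Chevalley--Cartan formula for left-invariant forms (which makes $\mathfrak h'=\langle Y,e_1\rangle$ an ideal, so $\mathrm{Lie}(H)=\mathfrak h'$ and $\dim H=2$), and the Jacobi computation giving the $e_1$-component $\alpha-r$ of the Jacobiator are all correct, and together they force $[Y,e_1]=0$. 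Your version has the advantage of making the structure of $\mathfrak g$ explicit (a semidirect product of $\RR$ with the abelian ideal $\mathfrak h'$, acted on by $\mathrm{ad}(e_2)$), which anticipates the classification stated at the end of the section, whereas the paper's argument is coordinate-free and reusable. The one point where you deliver slightly less than the statement asks is that an abelian Lie algebra only gives abelianness of the identity component $H^{0}$; the paper's leafwise-translation argument applies to every element of $H$, connected to the identity or not. This is immaterial for the subsequent use of the proposition, but if you want the literal claim you should add one line, e.g. the observation that each element of $H$ restricts to every leaf as a time-$t$ map of the flow of $\tilde X$.
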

  
  \begin{proof} The group $G$ preserves the foliation $\tilde{\mathcal{F}}$ and its transverse  Riemannian structure. Therefore  an element of $G$ fixing one leaf of $\tilde{\mathcal{F}}$, will fixe all the leafs.
  The subgroup $H$ is nontrivial, since it contains the isotropy. The action of $G/H$ on the transversal of the foliation preserves a Riemannian structure, so it is of dimension
  at most one. Since this action has to be transitive, the dimension of $G/H$ is exactly one and the dimension of $H$ is two.
  
  Since $H$ preserves $\tilde X$, the elements of $H$ commutes in restriction to each leaf of $\tilde{\mathcal{F}}$. Hence $H$ is abelian.
  \end{proof}
  
  We will make use of the following result proved in~\cite{Dumitrescu} (pages 17-19):
  
  \begin{lemma}  $\tilde X$ is a central element in $\mathfrak{g}$. Consequently, $X$ is a global Killing field on $M$ preserved by $\mathfrak{g}.$
  \end{lemma}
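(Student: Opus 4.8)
The plan is to translate the statement into a question about the abstract Lie algebra $\mathfrak{g}$ and then to rule out the bad algebraic possibilities by a global argument on the compact surface $M$.

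First I would record the structure supplied by the two preceding propositions. Write $\mathfrak{h}=\mathrm{Lie}(H)$; it is a two dimensional abelian ideal of $\mathfrak{g}$ which \emph{contains} the one dimensional isotropy $\mathfrak{i}$, and $\mathfrak{g}=\mathfrak{h}\oplus\RR Z$ for any complement $Z$. Set $A:=\mathrm{ad}(Z)\vert_{\mathfrak{h}}\in\mathfrak{gl}(\mathfrak{h})$; since $\mathfrak{h}$ is abelian this endomorphism is independent of the choice of $Z$ and encodes $\mathfrak{g}$ completely, and a direct computation shows that the centre of $\mathfrak{g}$ is exactly $\ker A\subseteq\mathfrak{h}$. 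The invariant field $\tilde X$ corresponds, at the base point $o$, to a class $\overline{W}\in\mathfrak{h}/\mathfrak{i}$ spanning the one dimensional tangent space to the leaf through $o$; because the isotropy is unipotent and nontrivial one has $AY\neq0$ for a generator $Y$ of $\mathfrak{i}$, so $\mathfrak{i}\not\subseteq\ker A$. Hence, \emph{as soon as $A$ is singular}, the line $\ker A$ is transverse to $\mathfrak{i}$ and projects isomorphically onto $\mathfrak{h}/\mathfrak{i}$; the unique lift $\hat X\in\ker A$ of $\overline{W}$ is then central, and since a central element has $\mathrm{Ad}(G)$-fixed class, its fundamental field is simultaneously $G$-invariant and Killing and agrees with $\tilde X$ at $o$, hence everywhere. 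Thus the entire lemma reduces to proving $\det A=0$, i.e. that $\mathfrak{g}$ has nontrivial centre, equivalently that $A$ is not invertible (one must exclude $\mathfrak{sol}$, $\mathfrak{e}(2)$ and the non-unimodular solvable algebras $\RR\ltimes_{A}\RR^{2}$ with $A$ invertible).

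To approach $\det A=0$ I would argue by contradiction, assuming $A$ invertible. A first, heuristic, observation handles the hyperbolic directions: if $A$ has an eigenvalue of nonzero real part, with eigendirection $u\in\mathfrak{h}$, then $K_{u}$ is tangent to $\mathcal F_{1}$ (as $\omega(u)=0$), so $K_{u}=\phi\,X$ along the leaves, while $[Z,u]=\lambda u$ forces the flow of the transverse Killing field $K_{Z}$ to scale $K_{u}$, hence $\phi$, exponentially. On a compact manifold carrying the finite invariant volume $vol$ this is the expected source of a contradiction. The hard point is that $K_{u}$ and $K_{Z}$ are in general only multivalued on $M$ (their monodromy is precisely the scaling $e^{s_{\gamma}\lambda}$), so $\phi$ is \emph{not} a globally bounded function and the naive boundedness argument collapses; the honest exclusion must be global.

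The main obstacle is therefore to exclude all invertible $A$ by a developing-map/holonomy argument, and in particular the elliptic case $\mathfrak{g}\cong\mathfrak{e}(2)$, where $\exp(tA)$ is bounded. My plan is to use $D:\tilde M\to G/I$ and $\rho:\pi_{1}(M)=\ZZ^{2}\to G$: the projection $G\to G/H\cong\RR$ identifies the periods of the closed nonsingular form $\omega$ with the ``$Z$-parts'' $s_{\gamma}$ of $\rho(\gamma)$, and since $\omega$ cannot be exact on the torus, some $s_{\gamma}\neq0$. Writing commuting generators as $\rho(\gamma_{j})=(s_{j},v_{j})$ in $G=\RR\ltimes_{e^{tA}}\RR^{2}$, the relation $[\rho(\gamma_{1}),\rho(\gamma_{2})]=1$ becomes $(\mathrm{Id}-e^{s_{2}A})v_{1}=(\mathrm{Id}-e^{s_{1}A})v_{2}$; for invertible $A$ this, together with the fact that a pure translation of $\widetilde{E}(2)$ or of $\mathrm{SOL}$ acts on $G/I$ with fixed points, is incompatible with the free, properly discontinuous, cocompact action needed to build $M$. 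Making this rigorous is exactly where completeness of the $(G,G/I)$-structure enters, and this is the delicate step; granting it, $A$ must be singular, so $\det A=0$. The central lift $\hat X\in\ker A$ then represents $\tilde X$, and being central it is a Killing field commuting with all of $\mathfrak{g}$; as $\tilde X$ is $G$-invariant it descends to a globally defined field $X$ on $M$, which is thus a global Killing field preserved by $\mathfrak{g}$, as asserted.
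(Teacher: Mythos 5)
Your first paragraph is a correct and genuinely useful reduction. Writing $\mathfrak{g}=\RR Z\ltimes_{A}\mathfrak{h}$ with $\mathfrak{h}=\mathrm{Lie}(H)$ abelian and containing the isotropy $\mathfrak{i}$, the centre of $\mathfrak{g}$ is indeed $\ker A$; it meets $\mathfrak{i}$ trivially because the isotropy representation is faithful; and once $\det A=0$ the central line is transverse to $\mathfrak{i}$ and maps onto the unique $I$-fixed direction of $\mathfrak{h}/\mathfrak{i}$, so its fundamental field is a $G$-invariant Killing field proportional to $\tilde X$, which gives exactly the statement. Note, for the comparison you were asked to make, that the paper itself offers no proof of this lemma: it is imported wholesale from \cite{Dumitrescu} (pages 17--19), so the entire content of the lemma is precisely the algebraic fact you have isolated, namely that $A=\mathrm{ad}(Z)\vert_{\mathfrak{h}}$ is singular.

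That is also where your proposal stops being a proof. The exclusion of invertible $A$ (i.e.\ of $\mathfrak{sol}$, $\mathfrak{e}(2)$ and the remaining algebras $\RR\ltimes_{A}\RR^{2}$) is explicitly deferred (``granting it''), and the route you sketch is circular where it is not incomplete. You propose to realize $M$ as a quotient of $G/I$ by a holonomy group acting freely, properly discontinuously and cocompactly, and to contradict this using fixed points of pure translations; but at this stage $M$ is only known to be \emph{locally modelled} on $G/I$. Completeness is assertion (ii) of theorem~\ref{complete and homogeneous}, proved \emph{after} homogeneity, and homogeneity itself rests on the present lemma; for an incomplete $(G,G/I)$-structure the developing map need not be injective or onto and the holonomy group need not act freely or properly on $G/I$, so no contradiction follows from your fixed-point observation. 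The commutation relation $(\mathrm{Id}-e^{s_{2}A})v_{1}=(\mathrm{Id}-e^{s_{1}A})v_{2}$ is recorded but never exploited, and you concede yourself that the exponential-scaling heuristic in the hyperbolic case collapses because $K_{u}$ and $K_{Z}$ are only multivalued on $M$. In sum, you have proved the easy implication ($\det A=0$ implies the lemma) cleanly, but the hard half --- the global dynamical argument that forces $\mathfrak{g}$ to have nontrivial centre, which is what \cite{Dumitrescu} supplies --- is missing.
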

  
  In order to prove that $M$ is homogeneous, we will construct  a second global Killing field on $M$ which is not tangent to the foliation $\mathcal F$ generated by $X$. For this we will study the holonomy group.

  Once again $M$ is of genus one, since it admits a nonsingular vector field. 
 Since the fundamental group of the torus is abelian, its  image $\Gamma$ by  the holonomy morphism is an abelian subgroup of $G$.

We prove  first that $\Gamma$ is nontrivial. Assume by contradiction that $\Gamma$ is trivial. Then the developing map is well defined on $M$ and we get a local diffeomorphism
$dev : M \to G/I$. The image has to be open and closed (for $M$ is compact). Hence, $dev$ is surjective. By Ehresmann's submersion theorem, $dev$ is a covering map. Remark~\ref{model simply connected} shows that $G/I$ can be considered simply connected, which implies $dev$ is a diffeomorphism: a contradiction, since $M$ is not simply connected.

  Considering  finite covers of $M$, the previous proof rules out the case where $\Gamma$ is finite. Consequently, $\Gamma$ is an infinite subgroup of $G$.

 Consider its  (real) Zariski closure  $\overline{\Gamma}$ in $G$, which is an abelian subgroup of positive dimension. This is possible, since by Lie's classification~\cite{Mos,Olver},
 $G$ is locally isomorphic to a real algebraic  group acting (transitively)  on $G/I$. Therefore we can assume  that $G$ is algebraic.

 Up to a finite cover of $M$, we can assume $\overline{\Gamma}$ connected (algebraic groups admit  at most finitely many connected components). 
 
 \begin{proposition}  \label{holonomie et champs de Killing} Any element $a$ of the Lie algebra of $\overline{\Gamma}$ defines a global Killing field on $M$. 
 
 Moreover, if the 
 one parameter subgroup  of $G$ generated by $a$  intersects $\Gamma$ nontrivially, then the orbits of the corresponding Killig field on $M$ are closed.
  \end{proposition}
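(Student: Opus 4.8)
The plan is to derive both assertions from the equivariance of the developing map $dev:\tilde M\to G/I$ and the holonomy morphism $\rho:\pi_1(M)\to G$, whose image is $\Gamma$. For $b\in\mathfrak g$ write $\xi_b$ for the fundamental vector field of the $G$-action on $G/I$; these are exactly the local Killing fields of the $G$-invariant structure, and pulling $\xi_a$ back through the \'etale map $dev$ produces a Killing field $\tilde X_a$ on $\tilde M$ for the lifted structure. First I would record how $\tilde X_a$ transforms under a deck transformation $\gamma\in\pi_1(M)$: combining $dev\circ\gamma=\rho(\gamma)\circ dev$ with the standard identity $(L_g)_*\xi_a=\xi_{\mathrm{Ad}(g)a}$ yields $\gamma_*\tilde X_a=\tilde X_{\mathrm{Ad}(\rho(\gamma))a}$. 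Hence $\tilde X_a$ is $\pi_1(M)$-invariant, and therefore descends to a global Killing field $X_a$ on $M$, precisely when $\mathrm{Ad}(\rho(\gamma))a=a$ for every $\gamma$.

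This last condition is automatic here: since $a\in\mathrm{Lie}(\overline\Gamma)$ and $\overline\Gamma$ is abelian and contains $\Gamma=\rho(\pi_1(M))$, the elements $\rho(\gamma)$ and $\exp(ta)$ lie in a common abelian group, so $\rho(\gamma)\exp(ta)\rho(\gamma)^{-1}=\exp(ta)$ and thus $\mathrm{Ad}(\rho(\gamma))a=a$. This establishes the first assertion.

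For the second assertion, suppose $\exp(t_0a)=\gamma_0\in\Gamma$ with $t_0\neq0$, and choose $c\in\pi_1(M)$ with $\rho(c)=\gamma_0$. The lifted flow $\tilde\Phi^t$ of $X_a$ is the flow of $\tilde X_a$, and since $dev$ intertwines $\xi_a$ with left translation we get $dev\circ\tilde\Phi^t=\exp(ta)\circ dev$. Taking $t=t_0$ gives $dev\circ\tilde\Phi^{t_0}=\gamma_0\circ dev=\rho(c)\circ dev=dev\circ c$. Thus the return map $\tilde\Phi^{t_0}$ and the deck transformation $c$ have the same development, so $\mu:=c^{-1}\circ\tilde\Phi^{t_0}$ satisfies $dev\circ\mu=dev$. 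Such a map is, read in $dev$-charts, locally the identity, whence its fixed-point set is open (and trivially closed); consequently if some power of $\mu$ is the identity then a positive power $\tilde\Phi^{Nt_0}=c^N$ is a genuine deck transformation, and projecting to $M$ gives $\Phi^{Nt_0}=\mathrm{id}_M$. Every orbit of $X_a$ would then be periodic, hence closed.

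The hard part is exactly to control $\mu$: a priori $dev$ need not be injective, its injectivity being equivalent to the completeness of $M$, which is part of what Theorem~\ref{complete and homogeneous} is establishing and is not yet available at this point. To circumvent this I would exploit the transverse geometry already in place: $X_a$ preserves the $\mathfrak g$-invariant foliation $\mathcal F$ and the closed $\mathfrak g$-invariant one-form $\omega$, so $\omega(X_a)$ is a constant and $\Phi^t$ acts on the transverse Riemannian structure of $\mathcal F$ by translations. The hypothesis $\exp(t_0a)=\gamma_0\in\Gamma$ says that the transverse displacement accumulated in time $t_0$ equals the period $\int_c\omega$ of the loop $c$, so that $\Phi^{t_0}$ sends each leaf of $\mathcal F$ back to itself; using the compactness of $M$ and the proper discontinuity of the $\pi_1(M)$-action, I would upgrade this to the statement that $\mu$ has finite order, which by the previous paragraph makes some power of the flow a deck transformation and forces the orbits of $X_a$ to be periodic, and in particular closed.
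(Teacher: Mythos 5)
Your proof of the first assertion is correct and coincides with the paper's: since $\Gamma\subset\overline{\Gamma}$ and $\overline{\Gamma}$ is abelian, $\mathrm{Ad}(\rho(\gamma))a=a$ for all $\gamma$, so the fundamental vector field $\xi_a$ on $G/I$ is $\Gamma$-invariant, its pull-back by $dev$ is $\pi_1(M)$-invariant, and it descends to $M$. No issues there.

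The second assertion is where your argument has a genuine gap. You set the problem up well: writing $\mu=c^{-1}\circ\tilde\Phi^{t_0}$, noting $dev\circ\mu=dev$, and observing that the fixed-point set of $\mu$ is open and closed, you reduce everything to showing that $\mu$ (or a power of it) actually has a fixed point --- equivalently, that $\tilde\Phi^{Nt_0}$ is a genuine deck transformation. You are right that this is the delicate point, precisely because $dev$ is not yet known to be injective. But you never prove it: ``I would upgrade this to the statement that $\mu$ has finite order'' is a declaration of intent, not an argument, and the mechanism you propose does not work in general. Your plan hinges on comparing the transverse displacement of $\Phi^{t_0}$ across $\mathcal F$ with the period $\int_c\omega$; but the proposition is stated for an arbitrary $a$ in the Lie algebra of $\overline{\Gamma}$, and in the paper's crucial application (the proof that $\overline{\Gamma}$ is not contained in $H$, argued by assuming $\overline{\Gamma}\subset H$) one has $a\in\mathrm{Lie}(H)$, so $X_a$ is tangent to $\mathcal F$, $\omega(X_a)=0$, the transverse displacement vanishes identically, and ``$\Phi^{t_0}$ sends each leaf back to itself'' is vacuously true and says nothing about periodicity inside a leaf --- whose leaves, at that stage, are not yet known to be closed (that is exactly what is being derived). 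So the one case in which the closedness of the orbits is actually needed is the case your argument cannot reach. For comparison, the paper argues differently: compactness of $M$ makes $X_a$ complete, hence the developed image of a lifted orbit is the \emph{entire} orbit $\{\exp(ta)\cdot dev(\tilde m_0)\}$ in $G/I$, which contains the distinct points $dev(\tilde m_0)$ and $\gamma_0\cdot dev(\tilde m_0)$ lying in the same $\Gamma$-orbit, and closedness of the orbit in $M$ is deduced from this return; your reduction to the map $\mu$ is a reasonable way to try to make that deduction precise, but as it stands you have identified the difficulty without resolving it.
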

 
 \begin{proof}  The action  of $\Gamma$ on $a$ (by adjoint representation) being trivial,   $a$ defines a  $\Gamma$-invariant vector field on $G/I$ which  descends on $M$.
 
 Assume now that $\Gamma$ intersects nontrivially the one parameter subgroup generated by $a$. One  orbit of the corresponding Killing field on $M$ develops in the model
 $G/I$ as one  orbit of the Killing field $a$. Since vector fields on compact manifolds are complete, the image of the developing map contains all of the orbit of $a$. In particular, the
 corresponding orbit of $a$  contains distinct points which are in the same $\Gamma$-orbit. Therefore, the corresponding orbit on $M$ is closed.
  \end{proof}
 
 We prove now:
 
 \begin{proposition} $\overline{\Gamma}$ is not a subgroup of $H$.
 \end{proposition}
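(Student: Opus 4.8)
The plan is to argue by contradiction: assuming $\overline{\Gamma}\subseteq H$, and hence $\Gamma\subseteq H$, I will show that the $\mathfrak{g}$-invariant closed one-form $\omega$ becomes exact on $M$; since $\omega$ is nowhere vanishing and $M$ is compact, this is impossible. The conceptual content is that a nontrivial holonomy must genuinely move the leaves of $\tilde{\mathcal{F}}$ around, and this displacement is precisely what $[\omega]\in H^1(M;\RR)$ detects.

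First I would package the transverse displacement of $\tilde{\mathcal{F}}$ into a homomorphism $\chi:G\to\RR$ with $\ker\chi=H$. Note that $G$ is solvable: since $G/H$ is one-dimensional, hence abelian, we have $[G,G]\subseteq H$, and $H$ itself is abelian, so $[[G,G],[G,G]]=0$. Passing to a cover we may take $G$ simply connected, so that $G/I\cong\RR^2$ is simply connected. The form $\tilde{\omega}=vol(\tilde X,\cdot)$ is closed, $G$-invariant and annihilates $\tilde X$, hence is basic for $\tilde{\mathcal{F}}$; being closed on the simply connected $G/I$ it is exact, say $\tilde{\omega}=dF$ with $F:G/I\to\RR$. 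By $G$-invariance of $dF$ the quantity $\chi(g):=F(g\cdot p)-F(p)$ is independent of $p$ and defines a Lie group homomorphism $\chi:G\to\RR$; its kernel is exactly the subgroup fixing every level set of $F$, that is, every leaf of $\tilde{\mathcal{F}}$, so that $\ker\chi=H$.

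Next I would relate the periods of $\omega$ to $\chi\circ\rho$. Lifting to the universal cover, $\mathrm{dev}^{\ast}\tilde{\omega}=d(F\circ\mathrm{dev})$ is an exact form whose induced form on $M$ is $\omega$, and the $\rho$-equivariance of the developing map yields $\int_{\gamma}\omega=F(\rho(\gamma)\cdot p_{0})-F(p_{0})=\chi(\rho(\gamma))$ for every $\gamma\in\pi_{1}(M)$. Thus the period homomorphism of $[\omega]$ equals $\chi\circ\rho$. If $\Gamma\subseteq H=\ker\chi$, then every period of $\omega$ vanishes, so $\omega=dh$ for some $h:M\to\RR$. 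But $h$ attains a maximum on the compact surface $M$, where $dh=0$, contradicting the fact that $\omega=vol(X,\cdot)$ is nowhere vanishing (a consequence of $X$ being nonsingular and $vol$ nondegenerate). Hence $\Gamma\not\subseteq H$, and a fortiori $\overline{\Gamma}\not\subseteq H$.

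The main obstacle I expect is the bookkeeping around the passage to a simply connected model: one must check that reducing to $G$ simply connected (so that $G/I$ is simply connected and $\tilde{\omega}$ is exact, giving an honest $\RR$-valued $\chi$) is compatible with the earlier reduction to an algebraic $G$ used to form the Zariski closure $\overline{\Gamma}$. Equivalently, one must ensure the leaf space $G/H$ is genuinely $\RR$ rather than a circle, for otherwise the periods $\int_{\gamma}\omega$ could land in a nonzero period lattice even when $\Gamma\subseteq H$. This is precisely where the solvability of $G$ and the completeness of the transverse translation structure enter, and it is the only step that is not a formal manipulation.
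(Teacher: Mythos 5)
Your argument is correct, and it reaches the conclusion by a cleaner route than the paper, although both proofs ultimately rest on the same object: the primitive $\tilde f$ (your $F$) of the closed $G$-invariant form $\tilde\omega$ on the simply connected model $G/I$. The paper assumes $\overline{\Gamma}\subseteq H$, uses the global Killing field $K$ with closed orbits produced by the preceding proposition to show that the orbits of $X$ are closed and that the leaf space of $\mathcal F$ is a circle, and then derives a contradiction from the fact that $\tilde f\circ dev$ descends to a surjective local diffeomorphism $S^1\to\RR$. You bypass the auxiliary field $K$ and the leaf-space analysis entirely: the identity $\int_\gamma\omega=\chi(\rho(\gamma))$ shows that $\Gamma\subseteq H\subseteq\ker\chi$ forces all periods of $\omega$ to vanish, and an exact nowhere-vanishing one-form on a compact manifold is impossible. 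This is a genuine simplification, and it even proves the slightly stronger statement $\Gamma\not\subseteq H$ directly. Two small remarks: you only need the easy inclusion $H\subseteq\ker\chi$ (an element preserving each leaf preserves each level of $F$), so the unproved reverse inclusion $\ker\chi\subseteq H$ can be dropped; and the worry you raise at the end is not actually an issue, since once $G/I$ is simply connected (which the paper guarantees in Remark~2.8 and itself uses at exactly this point) the homomorphism $\chi$ is honestly $\RR$-valued regardless of whether $G/H$ is a line or a circle, so no period lattice can appear.
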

 
 \begin{proof} Consider a one parameter subgroup in $G$,  generated by an element in the Lie algebra of  $\overline{\Gamma}$,  which nontrivially  intersects
 $\Gamma$. By proposition~\ref{holonomie et champs de Killing}, $a$ defines a global Killing  field $K$ with closed orbits on $M$.
 
  Assume by contradiction that $\overline{\Gamma}$ is  a subgroup of  $H$. It follows that the orbits of $K$ coincide with those of $X$. Consequently, the orbits of $X$ are closed and
  the space of leafs of $\mathcal F$ is a one dimensional manifold (see proposition~\ref{cadre}, point (v)). Since $M$ is compact,  the space of leafs is diffeomorphic to a circle $S^1$.
  
  Consider  the developing map $dev: \tilde{M} \to G/I$ of the $G/I$-structure. In particular, this is also the developing map of the transverse structure of the foliation $\mathcal{F}$.
  Since the holonomy group acts trivially on the transversal $\tilde T$ of $\tilde{\mathcal F}$, $dev$  descends to   a  local diffeomorphism  from the space of leafs of $\mathcal{F}$ (parametrized by  $S^1$) to $\tilde T$.

 Since $G/I$ is simply connected, the closed one form $\tilde{\omega}$ admits a primitive $\tilde f: G/I \to \RR$. Consequently, $\tilde f$ is a first integral for $\tilde{\mathcal F}$ and
  $\tilde f \circ dev$ descends to a local diffeomorphism from $S^1$ to $\RR$. This map has to be onto since the image is open and closed. We get a topological contradiction.
  \end{proof}
 
 By proposition~\ref{holonomie et champs de Killing}, any one parameter subgroup in $G$ generated by an element of the Lie algebra of   $\overline{\Gamma}$ non contained in the Lie algebra of  $H$ provides a global Killing field $K$ on $M$ such that
 the abelian group  generated by the flows of $X$ and of $K$ acts transitively on $M$. Therefore the $G/I$-structure on $M$ is homogeneous.

 (ii)  Consider $\tilde X, \tilde K$ the corresponding Killing vector fields on $G/I$. They generate a two dimensional  abelian   subgroup $A$  of $G$  acting with an open orbit
 on $G/I$.  Recall that $vol(\tilde X, \tilde K)$  is an  $A$-invariant function on $G/I$. Hence,  $vol(\tilde X, \tilde K)$  is a nonzero constant in restriction to the open orbit. By continuity, $vol(\tilde X, \tilde K)$     equals  the same  nonzero constant on the closure
 of the open orbit. This implies that $\tilde X, \tilde K$  remain  linearly independent on the closure of the open orbit, which implies that the open orbit is also closed. Since $G/I$ is connected, the open orbit is  all of $G/I$.
  \end{proof}
  
 The previous proof combined with a result  of~\cite{Dumitrescu}   leads to the following classification result:

 \begin{theorem} \label{classification} Let $M$ be a compact surface locally modelled on a homogeneous space $G/I$ such that $G/I$ admits a $G$-invariant   affine connection $\nabla$ and a volume form. 
 
 If  $I$ is noncompact, then $G$ is three dimensional and:

(i) either $G$ is  the unimodular $SOL$ group  and its action preserves a flat (two dimensional) Lorentz metric;

(ii) or $G$ is the Heisenberg group and its action preserves the standard flat torsion free affine connection on $\RR^2$ together with  a nonsingular closed one form  $\tilde{\omega}$  and with  a nontrivial  parallel vector field
  $\tilde X$ such that $\tilde{\omega}(\tilde X)=0$;
  
(iii) or  $G$ is isomorphic to the product $\RR \times Aff(\RR)$ and its action preserves the canonical bi-invariant torsion free and complete connection of $Aff(\RR)$ (for which the full
automorphism group is $Aff(\RR) \times Aff(\RR)$) together with a geodesic Killing field.
\end{theorem}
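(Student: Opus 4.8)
The plan is to reuse verbatim the dichotomy already produced in the proof of Theorem~\ref{complete and homogeneous} and then to pin down the isomorphism type of $G$ in the unipotent case by a purely Lie-algebraic computation. Since $I$ is noncompact, that proof shows $\dim G=3$, that $I$ is a one parameter subgroup, and that the faithful isotropy representation on $T_{o}G/I$ identifies $I$ with either a semi-simple or a unipotent one parameter subgroup of $SL(2,\RR)$. In the semi-simple case the isotropy preserves two transverse directions at $o$, hence two $G$-invariant line fields on $G/I$; together with the invariant volume this produces a $G$-invariant Lorentz metric $q$ exactly as in Proposition~\ref{a Lorentz metric}. A compact surface carrying two transverse line fields is a torus (Poincar\'e--Hopf), so $q$ is flat by the Lorentzian Gauss--Bonnet theorem, and $G$ must be the full (unimodular) automorphism group $SOL$ of $(\RR^2,dxdy)$. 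This is conclusion~(i), with completeness and homogeneity coming from the Lorentzian Bieberbach theorem~\cite{CD}.

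It remains to treat the unipotent case, where I would invoke the input from~\cite{Dumitrescu} already used for Theorem~\ref{complete and homogeneous}: the invariant vector field $\tilde X$ determined by the isotropy is \emph{central} in $\mathfrak{g}$, and the normal subgroup $H$ preserving each leaf of the foliation generated by $\tilde X$ is two dimensional and abelian. Let $X$ be the central generator of $\tilde X$, let $Y$ generate the unipotent isotropy $\mathfrak{I}$, and complete to a basis with a third vector $Z$. Then $X,Y\in\mathfrak{h}:=\mathrm{Lie}(H)$, and $X\notin\RR Y$ (otherwise $Y$ would be central and the isotropy would act trivially), so $\mathfrak{h}=\RR X\oplus\RR Y$. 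As $X$ is central and $\mathfrak{h}$ is an ideal, the only possibly nonzero bracket is $[Y,Z]=\alpha X+\beta Y$. On $\mathfrak{g}/\mathfrak{I}$ in the basis $\{\bar X,\bar Z\}$ one gets $\mathrm{ad}(Y)\bar X=0$ and $\mathrm{ad}(Y)\bar Z=\alpha\bar X$; since this representation is a nontrivial unipotent, $\alpha\neq 0$.

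The two subcases now read off immediately. If $\beta=0$, rescaling gives $[Y,Z]=X$ with $X$ central, so $\mathfrak{g}$ is the Heisenberg algebra: this is conclusion~(ii), with $G/I\cong\RR^2$ and $\tilde X$ the central $G$-invariant direction. If $\beta\neq 0$, the vector $W:=\alpha X+\beta Y$ satisfies $[W,Z]=\beta W$, so $\{W,Z\}$ spans a copy of $\mathfrak{aff}(\RR)$ transverse to the central line $\RR X$, whence $\mathfrak{g}\cong\RR\times\mathfrak{aff}(\RR)$: this is conclusion~(iii), the model being $Aff(\RR)$ with its bi-invariant connection, on which the central $\RR$-factor acts by right translations along the unipotent one parameter subgroup.

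To finish I would identify, in each subcase, the $G$-invariant affine connection through the Nomizu correspondence on the reductive space $G/I$: a reductive complement $\mathfrak{m}$ to $\mathfrak{I}$ is $\RR X\oplus\RR Z$ in the Heisenberg case and the subalgebra $\mathfrak{aff}(\RR)=\mathrm{span}(W,Z)$ in case~(iii). The canonical invariant connection then has torsion and curvature governed by $[\cdot,\cdot]_{\mathfrak{m}}$ and $[\cdot,\cdot]_{\mathfrak{I}}$; a direct computation shows it is flat and torsion free in the Heisenberg case, with $\tilde X$ parallel, and that it is the bi-invariant connection of $Aff(\RR)$ in case~(iii), for which $\tilde X$ is a geodesic Killing field. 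The one form $\tilde\omega=vol(\tilde X,\cdot)$ is $G$-invariant with $\tilde\omega(\tilde X)=0$, and is closed by the Stokes argument already carried out in the proof of Theorem~\ref{complete and homogeneous}. The main obstacle is precisely this last step: the Lie-algebraic trichotomy is elementary once the centrality of $\tilde X$ is granted, but checking that the a priori non-unique invariant connection is forced to be the stated flat, resp. bi-invariant, one, and that the distinguished field is parallel, resp. geodesic, is where the genuine work lies.
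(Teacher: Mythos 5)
Your reduction to $\dim G=3$ with one parameter isotropy, and your treatment of the semi-simple case (two invariant line fields, the Lorentz metric $q$, Poincar\'e--Hopf, Gauss--Bonnet, $G=SOL$), coincide with the paper's, which simply reuses Case~I of the proof of Theorem~\ref{complete and homogeneous}. For the unipotent case the paper gives no argument at all: it cites \cite{Dumitrescu} (Proposition 3.5 and pages 15--19). Your Lie-algebraic derivation of the trichotomy is therefore a genuinely different and more self-contained route, and it is correct as far as it goes: granting the two imported facts (centrality of $\tilde X$ in $\mathfrak{g}$ and the fact that $H$ is a two dimensional abelian normal subgroup containing the isotropy), the computation $[Y,Z]=\alpha X+\beta Y$ with $\alpha\neq 0$ forced by faithfulness of the isotropy representation, splitting into the Heisenberg algebra when $\beta=0$ and $\RR\times\mathfrak{aff}(\RR)$ when $\beta\neq 0$, is sound.

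The gap is in the last step, and it is not mere bookkeeping. Statements (ii) and (iii) assert more than the isomorphism type of $G$: they say the $G$-action preserves a specific connection (the standard flat torsion free one on $\RR^2$, respectively the bi-invariant connection of $Aff(\RR)$) and that $\tilde X$ is parallel, respectively geodesic. Reading this off from ``the canonical invariant connection'' of a reductive complement does not work as stated, because on both models the space of $G$-invariant connections is an affine space of positive dimension (on the Heisenberg model, with $\mathfrak{m}=\mathrm{span}(e_{1},e_{2})$, $e_{1}=\bar X$, $e_{2}=\bar Z$, the tensor $e^{2}\otimes e^{2}\otimes e_{1}$ is $\mathrm{ad}(Y)$-invariant), so the given $\nabla$ need not be the canonical connection of your decomposition, and the canonical connection is only one among the invariant ones. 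What actually has to be checked is that the explicit transitive actions forced by your Lie algebra data --- the Heisenberg group acting on $\RR^2$ by $(x,y)\mapsto(x+a+cy,\,y+b)$, and $\RR\times Aff(\RR)$ acting on $Aff(\RR)$ --- preserve the stated connections and give $\tilde X$ the stated property; this is a direct verification on the model, and it is precisely the content the paper outsources to \cite{Dumitrescu}. You correctly identify this as ``where the genuine work lies,'' but you do not carry it out, so the argument stops exactly where the theorem says something beyond the classification of $\mathfrak{g}$.
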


The model (i) was obtained in the previous proof in the case where the isotropy is semi-simple. The models (ii) and (iii) correspond to the case where  the isotropy is unipotent.
This  classification follows from~\cite{Dumitrescu} (see Proposition 3.5 and pages 15-19).

Theorem~\ref{complete and homogeneous}  can be deduced from theorem~\ref{classification}. Indeed, in the case (ii)  $G$ preserves  a flat torsion free affine connection and
 Nagano-Yagi's result applies. The completeness of  compact  surfaces locally modelled on the homogeneous space (iii) was proved in Proposition 9.3 of~\cite{Zeghib} (the homogeneity
 follows from the proof of Proposition 10.1 in~\cite{Zeghib}).


\begin{thebibliography}{AA}
          
\bibitem{AVL} {\it D. Alekseevskij, A. Vinogradov, V. Lychagin}, Basic Ideas and Concepts in Differential Geometry, E.M.S., Geometry I, Springer-Verlag (1991).       
          
 \bibitem{Amores} {\it A. M. Amores}, Vector fields of a finite type $G$-structure, J. Differential Geom., {\bf 14(1)}, (1979), 1-6.
 
 \bibitem{AM-K} {\it T. Arias-Marco, O. Kowalski}, Classification of locally homogeneous affine connections with arbitrary torsion on $2$-dimensional manifolds, Monatsh. Math.,
 {\bf 153}, (2008), 1-18.
 
 
 \bibitem{Benoist1} {\it Y. Benoist}, Tores affines, Contemp. Math., {\bf 262}, (2000), 1-32.
 
 
 
\bibitem{Benoist2} {\it Y. Benoist}, Orbites de structures rigides,  Integrable systems and foliations (Montpellier), Boston, Birka\"user, (1997).

\bibitem{BFL} { \it Y. Benoist, P. Foulon, F. Labourie}, Flots d'Anosov \`a distributions stables et instables diff\'erentiables, Jour. Amer. Math. Soc.,
{\bf 5}, (1992), 33-74.

\bibitem{BF} {\it J. Benveniste, D. Fisher}, Nonexistence of invariant rigid structures and invariant almost rigid structures, Comm. Annal. Geom., {\bf 13(1)}, (2005), 89-111.

\bibitem{Benzecri} {\it J. Benzecri}, Sur les vari\'et\'es localement affines et localement projectives, Bull. Soc. Math. France, {\bf 88}, (1960), 229-332.


 
 \bibitem{BMM}{\it R. Bryant, G. Manno, V. Matveev}, A solution of a problem of Sophus Lie: Normal forms of $2$-dim metrics admitting two projective vector fields, Math. Ann., {\bf 340(2)}, (2008), 437-463.
 
 \bibitem{BV}  {\it P. Bueken, L. Vanhecke}, Examples of curvature homogeneous Lorentz metrics, Class. Quant. Grav., {\bf 5}, (1997), 93-96.
 
 \bibitem{CQ} {\it A. Candel, R. Quiroga-Barranco}, Gromov's centralizer theorem, Geom. Dedicata {\bf 100}, (2003), 123-155.
 
 \bibitem{CD} {\it Y. Carri\`ere, F. Dalbo}, G\'en\'eralisations du premier th\'eor\`eme de Bieberbach sur les groupes cristalographiques, Enseign. Math., {\bf 35(2)}, (1989), 245-263.
 
\bibitem{DG} {\it G. D'Ambra, M. Gromov}, Lectures on transformations groups: geometry and dynamics, Surveys
in Differential Geometry (Cambridge), (1990), 19-111.

 \bibitem{Dumitrescu} {\it S. Dumitrescu},  Dynamique du pseudo-groupe des isom\'etries locales sur une vari\'et\'e lorentzienne analytique de dimension $3$, Ergodic Th. Dyn. Systems, {\bf 28(4)}, (2008), 1091-1116.

\bibitem{Egorov} {\it I. Egorov}, On the order of the group of motions of spaces with affine connection, Dokl. Akad. Nauk. SSSR, {\bf 57}, (1947), 867-870.

\bibitem{Feres} {\it R. Feres}, Rigid geometric structures and actions of semisimple Lie groups, Rigidit\'e, groupe fondamental et dynamique, Panorama et synth\`eses,
{\bf 13}, Soc. Math. France, Paris, (2002).

\bibitem{Tri}  {\it P. Friedbert, F. Tricerri, L. Vanhecke}, Curvature invariants, differential operators and local homogeneity, Trans. Amer. Math. Soc., {\bf 348},
(1996), 4643-4652.


\bibitem{Gro}  {\it M. Gromov}, Rigid transformation groups, G\'eom\'etrie Diff\'erentielle, (D. Bernard et Choquet-Bruhat Ed.), Travaux en cours,
Hermann, Paris, {\bf 33}, (1988), 65-141.

\bibitem{Kir} {\it A. Kirilov}, El\'ements de la th\'eorie des repr\'esentations, M.I.R., (1974).


\bibitem{Kobayashi} {\it S. Kobayashi}, Transformation groupes in differential geometry, Springer-Verlag, (1972).

\bibitem{K}  {\it O. Kowalski}, Counter-example to the second Singer's theorem, Ann. Global Anal. Geom., {\bf 8(2)}, (1990), 211-214.


\bibitem{KOV} {\it O. Kowalski, B. Opozda, Z. Vl\'asek}, A classification of locally homogeneous connections on $2$-dimensional manifolds via group-theoretical approach,
CEJM, {\bf 2(1)}, (2004), 87-102.

\bibitem{LT} {\it  F. Lastaria  \& F. Tricceri}, Curvature-orbits and locally homogeneous Riemannian manifolds, Ann. Mat. Pura Appl., {\bf 165(4)}, (1993), 121-131.

\bibitem{Lie} {\it S. Lie}, Theorie der Transformationsgruppen, Math. Ann., {\bf 16}, (1880), 441-528.

\bibitem{Melnick} { \it K. Melnick}, Compact Lorentz manifolds with local symmetry, Journal of Diff. Geom., {\bf 81 (2)}, (2009), 355-390.


\bibitem{Milnor} {\it J. Milnor}, On the existence of a connection of curvature zero, Comment. Math. Helv., {\bf 32}, (1958), 215-223.

\bibitem{Molino}  {\it P. Molino}, Riemannian Foliations, Birkhauser, (1988).

\bibitem{Mos}  {\it G. Mostow}, The extensibility of local Lie groups of transformations and groups on surfaces, Ann. of Math., {\bf 52(2)}, (1950), 606-636.

\bibitem{Nagano} {\it T. Nagano, K. Yagi}, The affine structures on the real two torus, Osaka J. Math., {\bf 11}, (1974), 181-210.



\bibitem{Nomizu}    {\it K. Nomizu}, On local and global existence of Killing vector fields, Ann. of Math. (2), 
{\bf 72}, (1960), 105-120.

\bibitem{Olver} {\it P. Olver}, Equivalence, invariants and symmetry, Cambridge Univ. Press., Cambridge, (1995).

\bibitem{Opozda} {\it B. Opozda}, Locally homogeneous affine connections on compact surfaces, Proc. Amer. Math. Soc., {\bf 9(132)}, (2004), 2713-2721.



\bibitem{Sharpe} {\it R. Sharpe}, Differential Geometry, Cartan's Generalization of Klein's Erlangen Program, Springer, (2000).


\bibitem{Wolf}  {\it J. Wolf}, Spaces of constant curvature, McGraw-Hill Series in Higher Math., (1967).


\bibitem{Zeghib} {\it A. Zeghib}, Killing fields in compact Lorentz $3$-manifolds, J. Differential  Geom., {\bf 43}, (1996), 859-894.

\end{thebibliography}
\end{document}